\DeclareMathAccent{\mathring}{\mathalpha}{operators}{"17}
\newcommand{\mysection}[1]{\section{#1}
      \setcounter{equation}{0}}
\newcommand{\nlimsup}{\operatornamewithlimits{\overline{lim}}}
\newtheorem{theorem}{Theorem}[section]
\newtheorem{lemma}[theorem]{Lemma}
\newtheorem{corollary}[theorem]{Corollary}
\theoremstyle{definition}
\newtheorem{assumption}{Assumption}[section]
\theoremstyle{remark}
\newtheorem{remark}{Remark}[section]
\newcommand\bR{\mathbb{R}}
\newcommand\bS{\mathbb{S}}
\newcommand\bZ{\mathbb{Z}}
\newcommand\cP{\mathcal{P}}
\newcommand\cH{\mathcal{H}}
\newcommand\frH{\mathfrak{H}}
\newcommand\dist{{\rm dist}\,}
\newcommand\esssup{\operatornamewithlimits{ess\,sup\,}}
\begin{document}

\title[Fully nonlinear
parabolic equations]
{An ersatz existence theorem for fully nonlinear
parabolic equations without convexity assumptions}

\author{N.V. Krylov}
\thanks{The  author was partially supported by
NSF Grant DMS-1160569}
\email{krylov@math.umn.edu}
\address{127 Vincent Hall, University of Minnesota,
 Minneapolis, MN, 55455}

\keywords{Fully nonlinear equations, parabolic equations,
Bellman's equations, finite differences}

 \subjclass[2010]{35K55,39A12}

\begin{abstract}
We show that for any uniformly parabolic fully
nonlinear second-order equation with bounded measurable
``coefficients'' and bounded ``free'' term 
 in the whole space or in any
 cylindrical smooth domain  with  smooth
boundary data
one can find an
approximating equation which has a   continuous  solution
with the first  and the second spatial derivatives
under control: bounded in the case of the whole space
and locally bounded in case of equations in cylinders.
 The approximating equation is constructed in such a
way that it modifies the original one only for large values of the
second spatial derivatives of the unknown  function.
This is different from a previous work of Hongjie Dong
and the author where the modification was done
for large values of the
unknown function and its spatial derivatives.
\end{abstract}

\maketitle

\mysection{Introduction}
                                       \label{section 9.14.1}

This article is a natural continuation of
\cite{DK} and is written in the same framework.
We are given a function  $H(u,t,x)$,
$$
 u=(u',u''),\quad
u'=(u'_{0},u'_{1},...,u'_{d}) \in\bR^{d+1},\quad u''\in\bS,\quad
(t,x)\in\bR^{d+1}, 
$$ 
where $\bS$ is the
set of symmetric $d\times d$ matrices, and
we are dealing with some modifications of
the parabolic  equation 
\begin{equation}
                                                \label{7.29.1}
\partial_t v(t,x)+H[v](t,x):= \partial_t v(t,x)+H(v(t, x),D v(t,x),D^{2}v(t, x),t, x)=0
\end{equation}
in subdomains of $(0,T)\times \bR^d $, where $T\in(0,\infty)$,
$$
\bR^{d}=\{x=(x_{1},...,x_{d}):x_{1},...,x_{d}\in \bR\},
$$

$$
\partial_t=\frac{\partial}{\partial t},\quad
 D^{2}u=(D_{ij}u),\quad Du=(D_{i}u),\quad
D_{i}=\frac{\partial}{\partial x_{i}},
\quad D_{ij}=D_{i}D_{j}.
$$
As in \cite{DK} we are looking for a uniformly elliptic
 operator $P[v]$
given by a convex positive-homogeneous of degree one
function $P$ independent of $(t,x)$ such that
the boundary-value problem we are interested in for the equation 
\begin{equation}
                                               \label{9.23.2}
\partial_t v+\max(H[v],P[v]-K)=0 
\end{equation}
would be solvable in the classical sense (a.e.) for any constant $K
{\color{black}>}0$.
However, unlike \cite{DK} we do not allow $P[v]$ to depend
on $v$ and its first derivatives, so that $P(u,t,x)=P(u'')$.
 A big advantage of this approach is that we do not need
Lipschitz continuity of $H(u,t,x)$ with respect to $u'$
but rather not faster than linear growth
of $H(u',0,t,x)$ as $|u'|\to\infty$.   
Actually, our results even in the particular case of $H$
{\em independent\/} of $u'$ 
  play a major role in  paper
\cite{Kr13}  aimed at  proving
that $L_{p}$-viscosity solutions of \eqref{7.29.1}
are in $C^{1+\alpha}$ provided that ``the main coefficients''
of $H$ are in VMO.

 Solvability theory for uniformly nondegenerate
 parabolic equations like 
\eqref{7.29.1} and its elliptic counterparts in  H\"older
classes of functions is   well developed in case $H$ is 
convex or concave
in $u''$ (see, for instance, \cite{GT}, \cite{Kr85},
\cite{Li}). In case this condition is abandoned
 N. Nadirashvili and S. Vl\v{a}dut \cite{NV} gave an example
of elliptic fully nonlinear equation which does not admit
classical (or even $C^{1+\alpha}$ viscosity) solution.
For that reason the interest in Sobolev space theory
became even more justifiable. In \cite{Kr2} the author proved the first
existence (and uniqueness) result for fully nonlinear
elliptic equations under relaxed convexity assumption
for equations with VMO ``coefficients''. Previously,
M. G. Crandall, M. Kocan,
and A. \'Swi{\c e}ch \cite{CKS00}  established the {\em
solvability\/} in local Sobolev spaces of 
the boundary-value problems
for fully nonlinear parabolic equations 
 and    N. Winter \cite{Wi09}
  established the solvability in the global $W^2_p$-space of the
associated boundary-value problem in the elliptic case.
In the solvability parts of these two papers
 $H$ is assumed to be convex in $u''$
and, basically, have continuous  ``coefficients'' (actually, it is 
assumed to be uniformly sufficiently close to the ones
having continuous  ``coefficients'').

There is also a  quite extensive a priori estimates side of the story
(not involving the solvability)
for which we refer the reader to \cite{CKS00}, \cite{DK},
\cite{Wi09} and the references therein.

Apart from Theorems \ref{theorem 9.12.1} 
about the solvability of equations in the whole space
and 
\ref{theorem 9.23.01} about that in cylinders, which are proved in Sections
\ref{section 9.21.2} and \ref{section 9.21.1}, respectively,
Theorem \ref{theorem 9.15.1} proved in Section \ref{section 9.21.3}
is also one of our main results. Roughly 
speaking, it says that as $K\to\infty$ the solutions of
\eqref{9.23.2} converge to the maximal $L_{d+1}$-viscosity
solution of \eqref{7.29.1}. The existence of
the maximal $L_{p}$-viscosity
solution for elliptic case was proved in \cite{JS05}.
We provide a method which in principle allows one to find it.

Finally, Section \ref{section 9.14.2} contains
our main results, Section \ref{section 12.13.2}
is devoted to reducing Theorem \ref{theorem 9.12.1}
to a simpler statement, {\color{black}
and in the rather long Section \ref{section 5.3.1}
we prepare necessary tools in order to be able to prove 
our main results by using finite-difference approximations.
}

\mysection{Main results}
                                       \label{section 9.14.2}
 
Fix some constants $\delta\in(0,1)$ and $K_{0}\in[0,\infty)$.
 Set
$$
\bS_{\delta}=\{a\in\bS:\delta|\xi|^{2}\leq a_{ij}\xi_{i}\xi_{j}
\leq\delta^{-1}|\xi|^{2},\quad \forall\, \xi\in\bR^{d}\}, 
$$
 where and
everywhere in the article the summation convention is enforced.

\begin{assumption}
                                    \label{assumption 9.23.1}
(i) 
The function   $H(u,t,x)$
is measurable with respect to $(u',t,x)$ for any $u''$, 
  Lipschitz continuous in $u''$, and at all points of differentiability
of $H$ with respect to $u''$ we have
$H_{u''}\in \bS_{\delta}$,

(ii) The number
$$
\bar{H} :=\sup_{u',t,x}\big(|H (u',0,t,x)|-K_{0}|u'|\big)
\quad(\geq0)
$$
is finite,

(iii) There is an increasing
 continuous function $\omega(r)$, $r\geq0$,
such that $\omega(0)=0$ and
$$
|H(u',u'',t,x)-H(v',u'',t,x)|\leq\omega(|u'-v'|)
$$
for all $u,v,t$, and $x$.

\end{assumption}

 By Theorem  3.1 of \cite{Kr11} there exists
a set 
\begin{equation}
                                              \label{6.1.2}
\Lambda=\{ l_{1},...,l_{m}\}
\subset \bZ^{d },
\end{equation}
$m=m(\delta,d)\geq d$, chosen on the sole
 basis of
knowing $\delta$ and $d$ and 
 there exists 
a constant  
$$
\hat{\delta}=\hat{\delta}(\delta,d ) 
\in(0,\delta/4]
$$
such that:

(a) We have $l_{i}=e_{i}$ and
$$
 e_{i}\pm e_{j}\in \{l_{1},...,l_{m}\}
=\{-l_{1},...,-l_{m}\}
$$
for all $i,j=1,...,d$, where $e_{1},...,e_{d}$ is the 
 standard orthonormal basis of $\bR^{d}$;

(b) There   exist
real-analytic functions $\lambda_{1}(a),...,
\lambda_{m}(a)$ on $\bS_{\delta/4}$ such that
for any $a\in\bS_{\delta/4}$
\begin{equation}
                                         \label{4.8.01}
a=\sum_{k=1}^{m}\lambda_{k}(a)l_{k}l_{k}^{*},
\quad \hat{\delta}^{-1}\geq\lambda_{k}(a)\geq\hat{\delta}
,\quad \forall k.
\end{equation}

Introduce
\begin{equation}
                                                       \label{6.1.1}
\cP(z'')  =\max_{\substack{\hat{\delta}/2\leq a_{k}\leq
2\hat{\delta}^{-1} \\k=1,...,m} }
 \sum_{k=1}^{m} a_{k}
z''_{k} ,
\end{equation}
and for $ u'' \in \bS$ define
$$
P( u'')=\cP( \langle u''l_{1},l_{1}\rangle,...,
\langle u''l_{m},l_{m}\rangle),
$$
where $\langle\cdot,\cdot\rangle$ is the scalar product in $\bR^{d}$.
Naturally, by $P[v]$ we mean a differential operator
constructed as in \eqref{7.29.1}.

{\color{black}
\begin{remark}
                                             \label{remark 6.5.1}
Obviously $P(u'')$ is Lipschitz continuous and at each point of its
differentiability we have
$$
P_{u''_{ij}}\xi_{i}\xi_{j}\geq
\min_{\substack{\hat{\delta}/2\leq a_{k}\leq
2\hat{\delta}^{-1} \\k=1,...,m} }
 \sum_{k=1}^{m}a_{k}\langle \xi,l_{k}\rangle^{2}\geq
(\hat{\delta}/2)\sum_{k=1}^{d}\langle \xi,l_{k}\rangle^{2}
=(\hat{\delta}/2)|\xi|^{2}.
$$
It follows that there exists
  $\check{\delta}\in(0,1)$ depending only
on $\delta,d$, such that the function $F_{K}=\max(H,P-K)$
is Lipschitz continuous with respect to $u''$ and at each point of its
differentiability with respect to $u''$ we have 
$F_{Ku''}\in\bS_{\check{\delta}}$.
\end{remark}

}

Let $\Omega$ be an open bounded subset of $\bR^{d}$ with
$C^{2}$ boundary. We denote the parabolic boundary of the cylinder
$\Omega_T=(0,T)\times \Omega$ by
$$
\partial'\Omega_{T}
=(\partial \Omega_{T})\setminus (\{0\}\times \Omega).
$$

Below, for $\alpha\in(0,1)$,
 the parabolic spaces $C^{\alpha/2,\alpha}$ and elliptic 
spaces $C^{\alpha}$ are usual H\"older spaces.
These  spaces are provided with  natural norms.
 
\begin{theorem}
                                          \label{theorem 9.12.1}

Let  $K{\color{black}>}0$ be a fixed constant,  
$g\in W^{1,2}_{\infty}(\Omega_{T})\cap C(\bar \Omega_{T})
$. Suppose that Assumption \ref{assumption 9.23.1}
is satisfied. Then
equation \eqref{9.23.2} in $\Omega_{T}$  
with  boundary
condition $v=g$ on $\partial'\Omega_{ T}$ has a 
  solution $v\in
C(\bar{\Omega}_T)\cap W^{1,2}_{\infty,\text{loc}}(\Omega_{T})$.
In addition, for all
$i,j$, and $p\in(d+1,\infty)$,
\begin{equation}
                                             \label{9.22.3}
|v|, |D_{i}v|,\rho |D_{ij} v |,|\partial_t v |\leq N(\bar{H}+K
+\|g\|_{ W^{1,2}_{\infty}(\Omega_{T})})\quad\text{in} \quad \Omega_T \quad (a.e.),
\end{equation}
\begin{equation}
                                                \label{1.13.2}
\|v\|_{W^{1,2}_{p}(\Omega_T)}\leq
N_{p}(\bar{H}+K+\|g\|_{W^{1,2}_{p}(\Omega_T)}), \end{equation}
\begin{equation}
                                                \label{2.28.1}
\|v\|_{ C^{\alpha/2,\alpha}(\Omega_T)}\leq N (
\bar{H}+\|g
\|_{ C^{\alpha/2,\alpha}(\Omega_T)}),
\end{equation}
where
$$
\rho=\rho(x)=\dist(x,\bR^{d}\setminus\Omega),
$$
$\alpha\in(0,1)$ is a constant depending only on $d$ and $\delta$,
   $N$ is a constant depending only on $\Omega$,
$T$, $K_{0}$, and $\delta$, 
whereas $N_{p}$ only depends on $\Omega$,  
$T$, $K_{0}$, $\delta$, and
$p$ (in particular, $N$ and $N_{p}$ are independent
of $\omega$).

\end{theorem}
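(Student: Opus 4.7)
The strategy is finite-difference approximation on a lattice $h\bZ^d$ aligned with the direction set $\Lambda$ from \eqref{6.1.2}, as signaled in the introduction (Section~\ref{section 12.13.2} reduces the theorem to a simpler form and Section~\ref{section 5.3.1} furnishes the discrete $L_p$- and Krylov--Safonov-type tools). The equation $\partial_t v + F_K[v] = 0$ with $F_K = \max(H, P-K)$ is uniformly parabolic with constant $\check{\delta}$ by Remark~\ref{remark 6.5.1}, and crucially $P$ discretizes exactly: if $\Delta_{h,l} u(x) = h^{-2}(u(x+hl) - 2u(x) + u(x-hl))$ and $\delta_{h,l} u(x) = h^{-1}(u(x+hl) - u(x))$, then $P[v](x)$ is well approximated by $\cP(\Delta_{h,l_1} v, \ldots, \Delta_{h,l_m} v)(x)$, while $H$ is replaced by a consistent discrete analog $H_h$.

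After reducing to zero boundary data, I solve the discrete equation $\partial_t v_h + \max(H_h[v_h], P_h[v_h]-K) = 0$ on $\Omega_T \cap (\bR \times h\bZ^d)$ with $v_h = g$ on the discrete parabolic boundary. Existence and uniqueness of $v_h$ follow from the monotone uniformly elliptic structure via Perron's method or a fixed-point argument. The sup bound in \eqref{9.22.3} follows from comparison with $g \pm C(\bar H + K) t$; the discrete first-derivative bound follows from a Bernstein-type argument comparing $v_h(t,\cdot + hl)$ with $v_h(t,\cdot)$ using the $\omega$-continuity of $H$ in $u'$ and the uniform ellipticity of $F_K$. The crucial second-difference bound exploits the $P$-cap: from $F_K[v_h] = -\partial_t v_h$ one gets $P_h[v_h] \leq K + |\partial_t v_h|$, and the uniform positivity $P_{u''}\geq (\hat{\delta}/2) I$ in Remark~\ref{remark 6.5.1} converts this into pointwise bounds on $\Delta_{h,l_k} v_h$, provided $\partial_t v_h$ is independently controlled (via difference quotients in $t$ within the same framework). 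The weight $\rho$ on $|D_{ij}v|$ is handled by inserting $\rho$ into cutoffs together with standard $C^2$ barriers from $\partial\Omega \in C^2$. Estimate \eqref{1.13.2} comes from linearization: since $F_{K,u''} \in \bS_{\check{\delta}}$, the function $v_h - g$ solves a linear uniformly parabolic equation with bounded measurable coefficients, to which the discrete Calder\'on--Zygmund theory from Section~\ref{section 5.3.1} applies uniformly in $h$; the H\"older estimate \eqref{2.28.1} follows from the discrete Krylov--Safonov estimate, yielding an exponent $\alpha$ depending only on $d$ and $\delta$.

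With uniform Lipschitz-in-$x$ and H\"older-in-$t$ control, Arzel\`a--Ascoli extracts a subsequence $v_{h_n} \to v$ uniformly on $\bar\Omega_T$; the uniform $W^{1,2}_{p,\text{loc}}$ bounds give weak convergence of first and second differences to $Dv$, $D^{2}v$, and consistency of the scheme allows passage to the limit in the discrete equation, yielding \eqref{9.23.2} almost everywhere. The estimates then transfer to $v$ by lower semicontinuity of norms and of $W^{1,2}_\infty$-type bounds.

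The principal obstacle is the uniform-in-$h$ second-derivative estimate, which requires reconciling three competing requirements: the $P$-cap control of $D^{2} v_h$, the simultaneous bound on $\partial_t v_h$ that feeds back into that cap, and the degeneration of such control near $\partial\Omega$ that the factor $\rho$ is designed to absorb. A closely related technical issue is constructing $H_h$ that is consistent with $H$, monotone in the standard discrete sense, and preserves Assumption~\ref{assumption 9.23.1}(ii)--(iii) with constants independent of $h$, so that all discrete estimates are independent of the modulus $\omega$---this is precisely what keeps $N$ and $N_p$ in the conclusion from depending on $\omega$.
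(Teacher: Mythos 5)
Your overall strategy (finite differences on the lattice generated by $\Lambda$, solve the discrete problem, obtain uniform estimates, and pass to the limit) is indeed the paper's strategy, but there are several genuine gaps where the plan as written would not close.

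\textbf{Order of the estimates.} You propose to first obtain the discrete first-derivative bound by a Bernstein-type argument and then derive the second-difference bound from the $P$-cap. This order does not work here. The paper proceeds in the opposite direction: the bounds on $\partial_t v_h$ (Lemma~\ref{lemma 9.13.1}) and on the weighted second differences (Lemma~\ref{lemma 9.16.1}) are first established \emph{in terms of} the sup of the first differences $\bar M_h$, and only afterwards (Lemma~\ref{lemma 9.22.3}) is $\bar M_h$ itself bounded via the interpolation inequality \eqref{9.24.7}, inserted between the $\rho$-weighted second-difference bound and the barrier estimate $|v_h-g|\le N\rho$. Remark~\ref{remark 6.9.1} emphasizes precisely this point: because $P$ is not allowed to depend on $u'$, one cannot estimate the first differences directly, and the interpolation must contend with the blow-up of the second-difference control near $\partial\Omega$. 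Your Bernstein step would also need to use $\omega$-continuity of $H$ in $u'$ quantitatively, which would smuggle $\omega$ into the estimate and contradict the stated independence of $N$ from $\omega$.

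\textbf{Dependence on $u'_0$ and the fixed-point step.} The modulus-of-continuity estimate for $v_h$ (Lemma~\ref{lem11.24}), which underlies the Arzel\`a--Ascoli step, is obtained by applying $T_{h,l}-1$ to the discrete equation; the coefficient multiplying $(T_{h,l}-1)v_h$ is essentially $\cH_{u'_0}$, and Lemma~\ref{lemma 5.3.1} gives a bound for it only in terms of $1+|u'|+|z''|$, which blows up near $\partial\Omega$ through the second differences. For this reason the paper first proves Theorem~\ref{theorem 10.5.1} when $H$ is independent of $u'_0$ and then recovers the general case by a Banach fixed-point argument in $w\mapsto v^w$, using the linear growth and \eqref{9.22.1}. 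Your proposal does not address this split, and a direct argument in full generality would fail at exactly this point.

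\textbf{Where \eqref{1.13.2} and \eqref{2.28.1} are proved.} You assign both the $W^{1,2}_p$ estimate and the H\"older estimate to uniform discrete a priori theory. In the paper these are established only at the continuous level, \emph{after} the limit $v$ has been constructed: \eqref{1.13.2} follows by rewriting the equation as $\partial_t u + P(D^2u) + G = 0$ with bounded $G$ (using \eqref{3.6.4}) and invoking the convex $W^{1,2}_p$ theory from \cite{DKL}, together with the uniqueness of $W^{1,2}_{d+1,\mathrm{loc}}\cap C$ solutions; \eqref{2.28.1} follows from classical linear theory once one writes \eqref{6.5.5}. A uniform-in-$h$ discrete Calder\'on--Zygmund estimate is neither used nor available in this generality.

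\textbf{Construction of the monotone discrete $H_h$.} You flag, correctly, that building a monotone $H_h$ consistent with $H$ is a key issue, but leave it open. The paper solves it by the representation \eqref{9.8.1} of $H$ as $\cH$ of pure second-order derivatives along the $l_k$'s (Theorem 5.2 and Corollary 5.3 of \cite{Kr11}), which is what makes $H_{K,h}[v]=\max(\cH(v,\delta_h v,\Delta_h v,t,x),\cP(\Delta_h v)-K)$ a genuine monotone scheme. Without that representation your discrete operator need not be monotone, and the comparison arguments underlying all the estimates would not apply.

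Finally, the preliminary reductions (mollification of $H$ in $(t,x)$ and in $u'$, passage to bounded inhomogeneity in $u''$ via Lemma~\ref{lemma 9.29.2}) are what justify assuming Lipschitz continuity in $u'$ and the crucial estimate \eqref{9.10.2}; your plan should state these reductions explicitly, since they are what make the independence of $N$ from $\omega$ possible.
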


Here is our second main result.
 
\begin{theorem}
                                    \label{theorem 9.23.01}
Let  $K{\color{black}>}0$ be a fixed constant, and
$g\in  W ^{ 2}_{\infty} (\bR^{d})$.
Then  equation \eqref{9.23.2}
in $Q_{T}:=(0,T)\times\bR^{d}$ (a.e.) with terminal 
condition $v(T,x)=g(x)$  has a  solution $v\in
W^{1,2}_{\infty}(Q_{T})\cap C(\bar Q_{T}) $.
In addition, 
$$
|v|, |Dv|, |D^{2}v |,|\partial_t v |\leq N(\bar{H}+K
+\|g\|_{ W ^{ 2}_{\infty} (\bR^{d})})\quad\text{in} \quad Q_T \quad (a.e.),
$$
$$
\|v\|_{ C^{\alpha/2,\alpha}(Q_T)}\leq N 
(\bar{H}+\|g
\|_{ C^{ \alpha}(\bR^{d})}),
$$
where
$\alpha\in(0,1)$ is a constant depending only on $d$ and $\delta$ 
 and  $N$ is a constant depending only on $T$, $K_{0}$, $d$,
 and $\delta$.

\end{theorem}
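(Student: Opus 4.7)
Since Theorem~\ref{theorem 9.23.01} precedes Theorem~\ref{theorem 9.12.1} in the exposition and cannot appeal to it, I would construct the solution directly via a finite-difference scheme on the spatial lattice $h\bZ^d$, using the machinery developed in Section~\ref{section 5.3.1}. The lattice $\Lambda=\{l_1,\dots,l_m\}$ of \eqref{6.1.2} is designed for exactly this purpose: the decomposition \eqref{4.8.01} lets one replace $a_{ij}D_{ij}v$, for any $a\in\bS_{\delta/4}$, by $\sum_k\lambda_k(a)\Delta_{h,l_k}v^h$, where $\Delta_{h,l}$ is the symmetric second difference along $l$, and $P$ has the obvious discrete counterpart $P^h$. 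By the calculation in Remark~\ref{remark 6.5.1} the resulting discrete operator $F^h_K:=\max(H^h,P^h-K)$ is monotone, Lipschitz, and uniformly elliptic with constant $\check\delta$.

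The plan is as follows. First, for each $h>0$ solve the semi-discrete backward Cauchy problem
\[
\partial_t v^h+F^h_K[v^h]=0 \text{ on } (0,T)\times h\bZ^d,\qquad v^h(T,\cdot)=g,
\]
as an ODE system in time, obtaining existence, uniqueness, and a sup-norm bound uniform in $h$ from the discrete maximum principle together with Assumption~\ref{assumption 9.23.1}(ii). Second, establish the crucial uniform bound on the second differences $\Delta_{h,l}v^h$; once it is in hand, the first differences $\delta_{h,l}v^h$ follow by an interpolation inequality and the terminal datum $g\in W^2_\infty(\bR^d)$, while $\partial_t v^h$ is controlled by the equation itself. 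Third, discrete Krylov--Safonov estimates from Section~\ref{section 5.3.1} for the uniformly elliptic $F^h_K$ give a uniform $C^{\alpha/2,\alpha}$ bound and local equicontinuity of the piecewise multilinear interpolants of $v^h$. Extracting a subsequence $v^{h_n}\to v$ uniformly on compact sets, with weak-$*$ convergence of the derivatives, the continuity of $H$ in $u'$ via Assumption~\ref{assumption 9.23.1}(iii) and consistency of the scheme imply that $v$ satisfies \eqref{9.23.2} a.e.\ on $Q_T$ with $v(T,\cdot)=g$, and the pointwise and H\"older bounds pass to the limit.

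The main obstacle is the uniform second-difference estimate. Without the $P-K$ modification, non-convexity of $H$ in $u''$ would forbid such a bound (as witnessed by the Nadirashvili--Vl\v{a}dut counterexample recalled in the introduction); with it, wherever $\Delta_{h,l}v^h$ is large the $\max$ selects $P^h-K$, and convexity of $P^h$ renders $\Delta_{h,l}v^h$ itself a subsolution of a uniformly elliptic linear equation with source controlled by $\bar H$, so that a discrete Bernstein/maximum-principle argument closes the estimate. Making this rigorous in the presence of the $\omega$-dependence of $H$ on $u'$, together with the mere measurability of $H$ in $(t,x)$, is the core technical difficulty and the reason Section~\ref{section 5.3.1} is devoted entirely to finite-difference machinery.
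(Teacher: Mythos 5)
Your overall strategy — discretize using the stencil $\Lambda$ from \eqref{6.1.2}, get uniform sup and difference bounds, establish a modulus of continuity, and pass to the limit — is in the same spirit as the paper's Section~\ref{section 9.21.1}, and you correctly identify the key mechanism: on the set where second differences are large the max in $H_{K,h}$ selects $P_h-K$, so a maximum-principle argument for $\Delta_{h,l_k}v_h$ becomes available. (Incidentally, Theorem~\ref{theorem 9.23.01} is stated \emph{after} Theorem~\ref{theorem 9.12.1}, and its proof comes after that of Theorem~\ref{theorem 10.5.1}; the reason one cannot simply invoke the bounded-domain result is the interior weight $\rho$ in \eqref{9.22.3}, not exposition order.)

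That said, there are genuine gaps. You omit entirely the reduction step: before running the finite-difference argument one must mollify $H$ in $(t,x)$ and in $u'$ so that \eqref{9.31.2} and \eqref{9.22.1} hold, and replace $H$ by $H_K=\max(H,P_0-K)$ so that the bounded-inhomogeneity bound \eqref{9.10.2} holds (this is Theorem~\ref{theorem 10.5.01}, the whole-space analogue of Theorem~\ref{theorem 10.5.1}). These are not cosmetic: \eqref{9.10.2} is what makes the representation $\cH$ and the key bound \eqref{9.8.3} available, while the Lipschitz continuity \eqref{9.22.1} is the hypothesis of Lemma~\ref{lemma 5.3.1}, which controls $\cH_{u'}$ and drives the modulus-of-continuity estimate (cf.\ \eqref{6.8.1} and Corollary~\ref{corollary 9.18.1}). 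Under the original Assumption~\ref{assumption 9.23.1}(iii) alone, where $H$ is only $\omega$-continuous in $u'$, those steps fail; flagging this as a "core technical difficulty" at the end does not resolve it. Further, the appeal to "discrete Krylov--Safonov estimates from Section~\ref{section 5.3.1}" is a misattribution: no such estimates appear there, and the paper instead obtains equicontinuity of $v_h$ by differencing the equation directly (the whole-space analogue of Lemma~\ref{lem11.24}, without the fixed-point detour because the second-difference bound is now global); the $C^{\alpha/2,\alpha}$ estimate of the theorem is then for the limit $v$ via the linearized form of the equation and classical parabolic theory. Finally, the order of your estimates is backward: Lemma~\ref{lemma 9.14.2} bounds $\partial_t v_h$ and $\Delta_{h,l_k}v_h$ \emph{in terms of} the sup of the first differences, and only then does the interpolation inequality \eqref{9.24.7} close the loop, because the second-difference term enters with a small factor $\varepsilon$. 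One cannot first obtain the second-difference bound in isolation and then derive the first-difference bound from it; the bootstrap is two-sided.
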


Before stating our third main result 
introduce the following.
\begin{assumption}
                                         \label{assumption 9.24.1}
The function $H$ is a nonincreasing function of $u'_{0}$, which
is continuous with respect to $u'_{0}$ uniformly with respect
to other variables,
and is Lipschitz continuous with respect to $(u'_1,...,u'_{d})$
with constant independent of other variables.  
\end{assumption}
{\color{black}
We also remind the reader a definition from \cite{CKS00}
according to which   we say that a function
$u(t,x)$ is an $L_{p}$-viscosity subsolution of \eqref{7.29.1}
in $\Omega_{T}$
if for any $(t_{0},x_{0})\in \Omega_{T}$ and any 
$\phi\in W^{1,2}_{p,loc}
(\Omega_{T})$ for which   $u-\phi$ is continuous at $(t_{0},x_{0})$
and attains a local
maximum at $(t_{0},x_{0})$, we have
$$
 \lim _{ r\downarrow0}\esssup_{C_{r}(t_{0},x_{0}) }
\big[\partial_{t}\phi(t,x) +
H(u(t,x),D\phi(t,x),D^{2}\phi(t,x),t,x)\big]\geq 0,
$$
where
$$
C_{r}(t_{0},x_{0})=(t_{0},t_{0}+r^{2})\times\{x\in\bR^{d}:
|x-x_{0}|<r\}.
$$
In a natural way one defines $L_{p}$-viscosity supersolution
and calls a function an $L_{p}$-viscosity solution if it is an
$L_{p}$-viscosity supersolution and an $L_{p}$-viscosity subsolution. 
} The reader is referred 
to \cite{CKS00} for numerous properties of $L_{p}$-viscosity
solutions.

Observe that under Assumption \ref{assumption 9.24.1}
 the solutions $v=v_{K}$ constructed in Theorem 
\ref{theorem 9.12.1}   for each $K$
 are unique
and decrease as $K\to\infty$.

\begin{theorem}
                                          \label{theorem 9.15.1}
 Suppose that Assumptions \ref{assumption 9.23.1}
and \ref{assumption 9.24.1} are
  satisfied. Then, 
as $K\to\infty$, $v_{K}$ converges uniformly on
$\bar\Omega_{T}$ to a continuous function $v$ which
is an $L_{d+1}$-viscosity solutions of \eqref{9.23.2} in $\Omega_{T}$  
with  boundary
condition $v=g$ on $\partial'\Omega_{ T}$.
Furthermore,  $v$ is 
the maximal $L_{d+1}$-viscosity subsolution
of class $C(\bar{\Omega}_{T})$ of this problem.
\end{theorem}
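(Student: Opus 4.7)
The plan is to prove the four things in sequence: existence of the uniform limit, the supersolution property, the subsolution property, and the maximality, exploiting the fact that by Assumption~\ref{assumption 9.24.1} the modified operators $F_K = \max(H, P-K)$ are proper and comparison holds.

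First, I would establish uniform convergence. Monotonicity of $v_K$ in $K$ follows from the comparison principle: since $F_K$ is pointwise nonincreasing in $K$, any $v_{K'}$ with $K' > K$ satisfies $\partial_{t}v_{K'} + F_K[v_{K'}] \geq \partial_{t}v_{K'} + F_{K'}[v_{K'}] = 0$, so it is a strong (hence $L_{d+1}$-viscosity) subsolution of the $K$-problem with the same boundary data, and comparison yields $v_{K'} \leq v_K$. Since the H\"older estimate \eqref{2.28.1} from Theorem~\ref{theorem 9.12.1} is independent of $K$, the family $\{v_K\}$ is equi-H\"older and uniformly bounded, so by Arzel\`a--Ascoli (together with the monotone decrease) the $v_K$ converge uniformly on $\bar{\Omega}_T$ to a function $v \in C(\bar{\Omega}_T)$ satisfying $v = g$ on $\partial'\Omega_T$.

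Next, the viscosity-solution property of the limit. For the supersolution side, $v_K$ satisfies $\partial_t v_K + H[v_K] \leq 0$ a.e.\ in $\Omega_T$ (since $F_K \geq H$ and $\partial_t v_K + F_K[v_K] = 0$), which by the standard equivalence in \cite{CKS00} makes it an $L_{d+1}$-viscosity supersolution of $\partial_t v + H[v] = 0$; stability of $L_{d+1}$-viscosity supersolutions under uniform convergence then transfers the property to $v$. For the subsolution side, $v_K$ is in particular an $L_{d+1}$-viscosity subsolution of $\partial_t v + F_K[v] = 0$. Given a test function $\phi \in W^{1,2}_{d+1,\mathrm{loc}}$ with $v-\phi$ attaining a local maximum at $(t_0,x_0)$, by stability theory (CKS00) one can produce points $(t_K,x_K) \to (t_0,x_0)$ at which $v_K - \phi$ has a local maximum, and conclude
$$
\lim_{r\downarrow 0}\esssup_{C_r(t_K,x_K)}\bigl[\partial_t\phi + F_K(\phi,D\phi,D^2\phi,t,x)\bigr] \geq 0.
$$
Here $\phi$ is fixed, so $P[\phi] \in L^\infty_{\mathrm{loc}}$, and for all sufficiently large $K$ we have $F_K[\phi] = H[\phi]$ a.e.\ on a neighborhood of $(t_0,x_0)$. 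Passing to the limit gives the desired subsolution inequality for $v$.

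Finally, I would prove maximality. Let $u \in C(\bar{\Omega}_T)$ be any $L_{d+1}$-viscosity subsolution of $\partial_t u + H[u] = 0$ with $u \leq g$ on $\partial'\Omega_T$. Since $F_K \geq H$ pointwise, $u$ is automatically an $L_{d+1}$-viscosity subsolution of $\partial_t u + F_K[u] = 0$, while $v_K$ is an $L_{d+1}$-viscosity (super)solution of the same equation with boundary data $g$. Assumption~\ref{assumption 9.24.1} (monotonicity in $u'_0$, Lipschitz in $(u'_1,\dots,u'_d)$) together with Remark~\ref{remark 6.5.1} ensures that the comparison principle for $L_{d+1}$-viscosity sub/supersolutions from \cite{CKS00} applies to $F_K$, giving $u \leq v_K$ in $\bar{\Omega}_T$. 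Sending $K\to\infty$ yields $u \leq v$, establishing maximality. The main obstacle I anticipate is the subsolution step: one must invoke the $L_p$-viscosity stability machinery with test functions in $W^{1,2}_p$ (not merely smooth), and verify that the restriction $F_K[\phi]=H[\phi]$ for large $K$ is genuinely uniform on the relevant neighborhood despite $D^2\phi$ being only $L^p$.
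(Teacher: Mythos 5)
Your outline (monotone decrease in $K$, uniform convergence via \eqref{2.28.1} and Arzel\`a--Ascoli, supersolution by stability of the a.e.\ inequality $\partial_t v_K+H[v_K]\le0$, maximality via comparison against $v_K$ and then $K\to\infty$) matches the paper, and those three pieces are fine. The maximality step is phrased a bit differently --- you compare $u$ directly against $v_K$ using that $u$ is a subsolution of the $F_K$-problem, whereas the paper writes $u-v_K$ as an $L_{d+1}$-viscosity subsolution of a shifted equation with nonpositive right-hand side and invokes Proposition 2.6 of \cite{CKS00} --- but these are essentially the same comparison argument.

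The genuine gap is exactly the one you flagged at the end, in the subsolution step, and it cannot be patched within your framework. The assertion ``$\phi$ is fixed, so $P[\phi]\in L^\infty_{\mathrm{loc}}$'' is false: a test function $\phi\in W^{1,2}_{d+1,\mathrm{loc}}$ has $D^2\phi$ only in $L_{d+1,\mathrm{loc}}$, so $P[\phi]$ is merely $L_{d+1,\mathrm{loc}}$ and there is in general no $K$ for which $F_K[\phi]=H[\phi]$ a.e.\ on any neighborhood of $(t_0,x_0)$; the set $\{P[\phi]-K>H[\phi]\}$ has positive measure for every $K$. Since the subsolution condition is formulated through an $\esssup$, and $F_K\ge H$, the inequality $\lim_{r\downarrow0}\esssup_{C_r}(\partial_t\phi+F_K[\phi])\ge0$ gives no information about $\lim_{r\downarrow0}\esssup_{C_r}(\partial_t\phi+H[\phi])$ as long as $F_K[\phi]>H[\phi]$ on a set of positive measure in every $C_r$. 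The paper avoids this entirely by proving Lemma~\ref{lemma 9.20.1}: using the linearized structure of $\max(H,P-K)$ and the parabolic ABP-type bound of \cite{Kr86}, one gets the integral comparison
\begin{equation*}
v_K\le\phi+\max_{\partial'C_r}(v_K-\phi)^{+}+Nr^{d/(d+1)}\bigl\|\bigl(\partial_t\phi+\max(H[\phi],P[\phi]-K)\bigr)^{+}\bigr\|_{L_{d+1}(C_r)},
\end{equation*}
and the $L_{d+1}$-norm on the right \emph{does} converge to $\|(\partial_t\phi+H[\phi])^+\|_{L_{d+1}(C_r)}$ as $K\to\infty$ by monotone/dominated convergence (pointwise a.e.\ convergence plus integrable domination by $\max(H[\phi],P[\phi])$). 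This integral formulation is robust to the unboundedness of $D^2\phi$, which is precisely what the $\esssup$ formulation lacks. The subsolution property is then extracted from \eqref{9.20.1} via the perturbed test function $\phi_{\varepsilon,r}=\phi+\varepsilon(|x-x_0|^2+t-t_0-r^2)$. You would need to supply this lemma (or something equivalent) to make the subsolution step go through.
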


\mysection{Reduction of Theorem \ref{theorem 9.12.1}
to a simpler statement}

Denote by $C^{1,2}(\bar\Omega_{T})$
the set of functions $g(t,x)$
such that $g,Dg,D^{2}g,\partial_{t}g\in
C(\bar\Omega_{T})$. The norm in
$C^{1,2}(\bar\Omega_{T})$ is introduced in an obvious way.
                                           \label{section 12.13.2}

\begin{lemma}
                                            \label{lemma 9.22.1}
Suppose that the assertions of Theorem \ref{theorem 9.12.1}
hold true if $g\in C^{1,2}(\bar\Omega_{T})$ and,
 in addition to Assumption
\ref{assumption 9.23.1}, for any
$s,t\in \bR$, $x,y\in\bR^{d}$,  $u=(u',u'')$, 
and $v=(v',v'')$,
\begin{equation}
                                    \label{9.31.2}
|H(u,t,x)-H(u,s,y)|\leq N'
(|t-s|+|x-y|)(1+|u|),
\end{equation}
\begin{equation}
                                           \label{9.22.1}
  |H(u',u'',t,x)-H(v',u'',t,x)|\leq N'
|u'-v'|
\end{equation}
where $N'$ is
independent of $t,s,x,y,u$, and $v$.
Then the assertions of Theorem \ref{theorem 9.12.1}
hold true without these additional assumptions as well.

\end{lemma}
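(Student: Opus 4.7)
The plan is a straightforward regularization argument: approximate both $H$ and $g$ so that the extra hypotheses \eqref{9.31.2}--\eqref{9.22.1} and $g\in C^{1,2}(\bar\Omega_T)$ are met, invoke the assumed special case of Theorem~\ref{theorem 9.12.1}, and then pass to the limit using the fact that the constants in \eqref{9.22.3}--\eqref{2.28.1} are independent of $\omega$ (hence independent of how we regularize $H$ in $u'$).

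First I would build $H^{(n)}$ by mollifying $H(u',u'',t,x)$ in the variables $(u',t,x)$ at scale $1/n$. Because mollification is a convex combination, the constraints $H^{(n)}_{u''}\in\bS_\delta$ and Lipschitz continuity in $u''$ are preserved, the modulus $\omega$ in $u'$ is inherited, and $\bar H^{(n)}\leq\bar H+\varepsilon_n$ with $\varepsilon_n\to0$. The regularization makes $H^{(n)}$ Lipschitz in $(u',t,x)$ on bounded sets, so \eqref{9.31.2} and \eqref{9.22.1} hold for each fixed $n$ with some $N'_n$ (which is allowed to blow up with $n$). Simultaneously I would extend $g$ to an element of $W^{1,2}_\infty(\bR^{d+1})\cap C(\bR^{d+1})$ (standard extension across the $C^2$ boundary of $\Omega$ and across $t=0,T$) and mollify to obtain $g^{(n)}\in C^{1,2}(\bar\Omega_T)$ with $g^{(n)}\to g$ uniformly on $\bar\Omega_T$ and
$$
\sup_n\bigl(\|g^{(n)}\|_{W^{1,2}_\infty(\Omega_T)}+\|g^{(n)}\|_{W^{1,2}_p(\Omega_T)}+\|g^{(n)}\|_{C^{\alpha/2,\alpha}(\Omega_T)}\bigr)<\infty,
$$
each controlled by the corresponding norm of $g$.

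Applying the assumed form of Theorem~\ref{theorem 9.12.1} to $(H^{(n)},g^{(n)})$ produces solutions $v^{(n)}\in C(\bar\Omega_T)\cap W^{1,2}_{\infty,\text{loc}}(\Omega_T)$ satisfying \eqref{9.22.3}--\eqref{2.28.1} with constants \emph{uniform} in $n$. From the uniform $C^{\alpha/2,\alpha}(\bar\Omega_T)$ bound I extract a subsequence with $v^{(n)}\to v$ uniformly on $\bar\Omega_T$ via Arzel\`a--Ascoli, and from the uniform local $W^{1,2}_p$ bound (for any fixed $p\in(d+1,\infty)$) a further subsequence with $v^{(n)}\rightharpoonup v$ weakly in $W^{1,2}_{p,\text{loc}}(\Omega_T)$. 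The boundary condition $v=g$ on $\partial'\Omega_T$ follows at once from the uniform convergence of $v^{(n)}$ and $g^{(n)}$, and the estimates \eqref{9.22.3}--\eqref{2.28.1} descend to $v$ by lower semicontinuity of the relevant norms.

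The main obstacle is passing to the limit in the nonlinear equation, because $D^2v^{(n)}\rightharpoonup D^2v$ is only weak convergence while $\max(H^{(n)}[\,\cdot\,],P[\,\cdot\,]-K)$ is nonlinear in $u''$. My plan is to use the $L_{d+1}$-viscosity framework of \cite{CKS00}: each $v^{(n)}$, being a locally strong $W^{1,2}_p$ solution of a uniformly parabolic equation (ellipticity from Remark~\ref{remark 6.5.1}), is automatically an $L_{d+1}$-viscosity solution of the corresponding equation. Since $H^{(n)}\to H$ locally uniformly in $(t,x,u'')$ and with uniform modulus $\omega$ in $u'$, the standard stability of $L_p$-viscosity solutions under uniform convergence of solutions and operators yields that $v$ is an $L_{d+1}$-viscosity solution of \eqref{9.23.2}. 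Combined with $v\in W^{1,2}_{p,\text{loc}}$ for $p>d+1$ and the uniform ellipticity, the equivalence of $L_p$-viscosity solutions with a.e.\ strong solutions (again from \cite{CKS00}) then implies that $v$ satisfies \eqref{9.23.2} almost everywhere in $\Omega_T$, which completes the proof.
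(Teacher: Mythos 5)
Your regularization setup (mollifying $H$ in $(u',t,x)$, mollifying $g$, invoking the assumed special case, extracting a uniformly convergent subsequence via the $C^{\alpha/2,\alpha}$ and $W^{1,2}_p$ bounds) mirrors the first half of the paper's argument and is fine. The genuine gap is in the passage to the limit in the nonlinear equation: you assert that ``$H^{(n)}\to H$ locally uniformly in $(t,x,u'')$'', and this is false. The function $H$ is only \emph{measurable} in $(t,x)$, so its mollification in those variables converges in $L^p_{\mathrm{loc}}$ and pointwise a.e.\ (by Lebesgue differentiation) but not uniformly, not even locally, and not even pointwise everywhere. Consequently the ``standard stability of $L_p$-viscosity solutions under uniform convergence of operators'' is not available as you invoke it. The stability theorems of \cite{CKS00} do allow for operators converging only in an $L^p$ sense, but the hypothesis there is that, for each fixed test function $\phi\in W^{1,2}_p$ on a compact subcylinder, one has $\|(H^{(n)}(\phi,D\phi,D^2\phi,\cdot,\cdot)-H(\phi,D\phi,D^2\phi,\cdot,\cdot))^{\pm}\|_{L_{d+1}}\to 0$; verifying this requires a dominated-convergence argument that interlaces the a.e.\ convergence of $H^{(n)}\to H$ (whose rate depends on $(t,x)$) with the mere $L_p$ integrability of $D^2\phi$, and you do not address it. The paper sidesteps this entirely: it never uses viscosity stability for this lemma, but instead freezes the lower-order arguments along the sequence, forms the upper envelope $\check{H}^n_K$ so that each $v^m$ ($m\geq n$) is an a.e.\ subsolution of the same operator, and then applies Theorem 3.5.9 of \cite{Kr85} (a convergence theorem tailored to a.e.\ $W^{1,2}_p$ sub/supersolutions under weak convergence of second derivatives) followed by the Lebesgue differentiation theorem to recover $\max(H,P-K)$ in the limit. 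That route is specifically designed for operators that are only measurable in $(t,x)$ and is what makes the argument close. If you want to keep your viscosity-theoretic framing, you must replace the false uniform-convergence claim with a careful verification of the $L_{d+1}$ stability hypothesis of \cite{CKS00}, and additionally justify the equivalence of the resulting $L_{d+1}$-viscosity solution with an a.e.\ $W^{1,2}_{p,\mathrm{loc}}$ solution under the present (measurable in $(t,x)$) structure conditions.
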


\begin{proof} First we suppose that the assertions of Theorem \ref{theorem 9.12.1}
hold true with $g$ as there  but under
the additional assumption that \eqref{9.31.2} and 
\eqref{9.22.1} hold.

Note that
\begin{equation}
                                              \label{11.7.1}
|H(u,t,x)|\leq |H(u,t,x)-H(u',0,t,x)|+K_{0}|u'|+\bar{H}
\leq \bar{H}+N(K_{0},d,\delta)|u|.
\end{equation}
Then
let $B_{1}$ be the open unit ball in $\bR^{d+1}$
centered at the origin. Take a nonnegative
$\zeta\in C^{\infty}_{0}(B_{1})$, which integrates
to one 
and introduce
$H_{n}(u,t,x)$ as the convolution of 
$H(u,t,x)$ and $n^{ d+1}\zeta(  nt,nx)$  performed
with respect to $( t,x)$. 
Observe that   $H_{n}$  satisfies
 Assumption \ref{assumption 9.23.1}
with the same constant $\delta $, whereas
$$
|H_{n}(u,t,x)-H_{n}(u,s,y)|\leq  n|B_{1}| (|t-s|+|x-y|)
\sup_{z}|H(u,z)|
\sup| D\zeta|,
$$
where $|B_{1}|$ is the volume of $B_{1}$,
and \eqref{9.31.2}
(with $N'$ of course depending on $n$)
 is satisfied due to \eqref{11.7.1}.

Next, define $H^{n}(u,t,x)$  as the convolution of 
$H_{n}(u,x)$ and $n^{ d+1}\zeta(n u', t, x)$  performed
with respect to $u'$. Obviously, for each $n$, $H^{n}$
satisfies \eqref{9.31.2} with a constant $N'$.
Furthermore, for any $k=0,...,d$
$$
H^{n}_{ u'_{k}}(u',u'',t,x)=n\int_{\bR^{ d+1}}
 H_{n}(u'-v'/n,u'',t ,x ) \zeta_{u'_{k}}(v' )\,dv'
$$
$$
=n\int_{\bR^{ d+1}}
[H_{n}(u'-v'/n,u'',t ,x )-H_{n}(u',u'',t,x)]
 \zeta_{u'_{k}}(v' )\,dv'.
$$
It follows that
$$
|H^{n}_{ u'_{k}}(u ,t,x)|\leq n\omega(1/n)|B_{1}|\sup| D\zeta|,
$$
so that $H^{n}$ also satisfies \eqref{9.22.1}.

Now by assumption there exist solutions 
$v^{n} \in  C(\bar{\Omega}_T)\cap 
W^{1,2}_{\infty,\text{loc}}(\Omega_{T})$
of
\begin{equation}
                                       \label{3.29.2}
\partial_{t}v^{n}+\max(H^{n}[v^{n}],P[v^{n}]-K)=0
\end{equation}
in $\Omega_{T}$ (a.e.) with  boundary condition $v^{n}=g$,
for which estimates \eqref{9.22.3},
\eqref{1.13.2}, and \eqref{2.28.1} hold with
$v^{n}$ in place of $v$ with the constants $N$ and $N_{p}$
from Theorem \ref{theorem 9.12.1} and with  
$$
\bar{H}^{n}=\sup_{u',t,x} \big(|H^{n}(u',0,t,x)|-K_{0}|u'|\big)
\quad\quad(\leq\bar{H}+K_{0}n^{-1})
$$
in place of $\bar{H}$. 
 Furthermore, being uniformly bounded and uniformly continuous,
the sequence $\{v^{n}\}$ has a subsequence uniformly converging
to a function $v$, for which 
\eqref{9.22.3},
\eqref{1.13.2}, and \eqref{2.28.1}, 
of course, hold 
and $v  \in 
 C(\bar{\Omega}_T)\cap 
W^{1,2}_{\infty,\text{loc}}(\Omega_{T})$. For simplicity 
of notation we suppose that the whole sequence $v^{n}$
converges.

Observe that 
\begin{equation}
                                              \label{10.3.4}
\partial_{t}v^{m}+\check{H}^{n}_{K}[v^{m}]\geq0
\end{equation}
in $\Omega_{T}$ (a.e.) for all $m\geq n$, where
$$
\check{H}^{n}_{K}(u,t,x):=\sup_{k\geq n}
\max(H^{k}(v^{k}(t,x),Dv^{k}(t,x),u'',t,x),P(u'')-K).
$$

In light of \eqref{10.3.4} and the fact that
 the norms $\|v^{n}\|_{W^{1,2}_{p}(\Omega_{T})}$ are bounded,
by Theorem  3.5.9  of \cite{Kr85}
 we have 
\begin{equation}
                                              \label{10.3.2}
\partial_{t}v+\check{H}^{n}_{K}[v ]\geq0 
\end{equation}
in $\Omega_{T}$ (a.e.).

Now we notice that by embedding theorems
$Dv^{k}$ are locally uniformly continuous in $\Omega_{T}$
and this and the convergence $v^{n}\to v$ implies 
 by a standard fact of calculus 
that $Dv^{k}$ converge to $Dv$ locally uniformly
in $\Omega_{T}$. Also
$$
|H^{k}(u,t,x)-H_{k}(u,t,x)|\leq\omega(1/k),
$$
$$
|H_{k} (v^{k}(t,x),Dv^{k}(t,x),D^{2}v(t,x),t,x)
-H_{k} (v (t,x),Dv (t,x),D^{2}v(t,x),t,x)|
$$
$$
\leq\omega\big(|v^{k}-v|(t,x)+|Dv^{k}-Dv|(t,x)\big),
$$
which along with what was said above implies that
\begin{equation}
                                              \label{9.22.5}
\partial_{t}v+\hat{H}^{n}_{K}[v ]\geq-\varepsilon_{n}
\end{equation}
in $\Omega_{T}$ (a.e.), where the functions $\varepsilon_{n}
\to 0$ in $\Omega_{T}$ (even locally uniformly) and
$$
\hat{H}^{n}_{K}(u,t,x):=\sup_{k\geq n}
\max(H_{k}(u,t,x),P(u'')-K).
$$

  Then we
  notice that by the Lebesgue differentiation theorem
for any $u$
\begin{equation}
                                              \label{10.3.1}
\lim_{n\to\infty}\hat{H}^{n}_{K}(u,t,x)=\max(H(u,t,x),P(u)-K)
\end{equation}
for almost all $(t,x)$. Since for any bounded set
$\Gamma$ in the range of $u$, $\hat{H}^{n}_{K}(u,t,x)$ are 
uniformly
continuous on $\Gamma$ uniformly with respect to $(t,x)$ and $n$,
there exists a subset of $\Omega_{T}$ of full measure such that
\eqref{10.3.1} holds on this subset for all $u$.

We conclude that in $\Omega_{T}$ (a.e.)
\begin{equation}
                                              \label{10.3.3}
\partial_{t}v+\max(H [v],P[v]-K)\geq0.
\end{equation}

The opposite inequality is obtained by considering
$$
 \inf_{k\geq n}
\max(H^{k}(v^{k}(t,x),Dv^{k}(t,x),u'',t,x),P(u'')-K).
$$

The fact that it suffices to prove Theorem
\ref{theorem 9.12.1} under the additional
assumption that $g\in C^{1,2}(\bar\Omega_{T})$
is proved by mollifying $g$
and using a very simplified version of the above
arguments.
The lemma is proved.
\end{proof}

 Next,
 we show that one may assume that $H$ is
boundedly inhomogeneous with respect to $u''$
{\color{black}(in the sense described in
Lemma \ref{lemma 9.29.2} below)}. Introduce 
$$
P_{0}(u)=P_{0}(u'')=\max_{a\in\bS_{\delta/2}}
 a_{ij}u''_{ij} , 
$$ 
where  the
summation    is performed
 before the maximum is taken.
It is easy to see that $P_{0}[u]$ is     Pucci's operator:
$$
P_{0}(u)=-(\delta/2)\sum_{k=1}^{d}\lambda_{k}^{-}(u'')
+2\delta^{-1}\sum_{k=1}^{d}\lambda_{k}^{+}(u''),
$$
where
$\lambda_{1}(u''),...,\lambda_{d}(u'')$ are the eigenvalues of $u''$ and
$a^{\pm}=(1/2)(|a|\pm a)$.

Observe that
$$
 P(u)  =\max_{\substack{\hat{\delta}/2\leq a_{k}\leq
2\hat{\delta}^{-1} \\k=1,...,m} }   \sum_{i,j=1}^{d} \sum_{k=1}^{m}
a_{k}l_{ki}l_{kj}u''_{ij}.
$$
Moreover,
 owing to property (b) in   Section
\ref{section 9.14.2}, the collection of matrices
$$
\sum_{k=1}^{m}a_{k}l_{k}l_{k}^{*}
$$ such that $\hat{\delta}\leq
a_{k}\leq \hat{\delta}^{-1},k=1,...,m$, covers   $\bS_{\delta/4}$.
Hence,
$$
P(u)\geq -(\delta/4)\sum_{k=1}^{d}\lambda_{k}^{-}(u'')
+4\delta^{-1}\sum_{k=1}^{d}\lambda_{k}^{+}(u'')
$$
\begin{equation}
                                          \label{3.6.1}
\geq P_{0}(u)+(\delta/4)\sum_{k=1}^{d}|\lambda_{k} (u'')|.
\end{equation}

In particular, $P_{0}\leq P$ and therefore,
$$
\max(H,P-K)=\max(H_{K},P-K),
$$
where $H_{K}=\max(H,P_{0}-K)$. It is
easy to see that the function $H_{K}$ satisfies Assumption
 \ref{assumption 9.23.1} (i)
 with $\delta/2$  in place of $\delta$, satisfies 
Assumption
 \ref{assumption 9.23.1} (iii) with the same function $\omega$,
and 
$$
|H_{K}(u',0, t,x) |\leq
|H (u',0,t,x)|
\leq K_{0}|u' |+\bar{H},
$$
so that the number
$$
\bar{H}_{K}:=\sup_{u',t,x}\big(|H_{K}(u',0,t,x)|-K_{0}|u'|\big)
\quad(\leq \bar{H})
$$
is finite and
Assumption
 \ref{assumption 9.23.1} (ii) is also satisfied.
Also observe that $H_{K}$
 satisfies \eqref{9.31.2} and \eqref{9.22.1}
with the same constant $N'$.

To continue we note the following.
\begin{lemma}
                                          \label{lemma 9.29.2}
There is a constant $\kappa>0$ depending only on $\delta$ and $d$ 
such
that 
\begin{equation}
                                            \label{9.29.2}
H \leq  P_{0}-\kappa
 |u'' | +K_{0} |u' | +
\bar{H}_{(+)},
\end{equation}
 \begin{equation}
                                            \label{3.6.3}
H_{K} \leq  P -\kappa
  |u'' | +K_{0} |u' | +
\bar{H}_{(+)},
\end{equation}
where
$$
\bar{H}_{(+)}:=
\sup_{u',t,x}\big( H^{+}(u',0,t,x) -K_{0}|u'|\big)\leq \bar{H}_{K}.
$$
Furthermore, $H_{K}$ is boundedly inhomogeneous with respect to 
$u''$ in
the sense that at all points of differentiability 
of $H_{K}(u,t,x)$ with
respect to $u''$ 
\begin{equation}
                                              \label{9.30.01}
|H_{K}(u,t,x)-H_{Ku''_{ij}}(u,t,x)u''_{ij}  |\leq
N(K_{0}+1)(\bar{H}_{K} +K +|u'|),
\end{equation} 
where $N$ depends only on  $d$ and $\delta$.

\end{lemma}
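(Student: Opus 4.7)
The plan is to establish \eqref{9.29.2} first, deduce \eqref{3.6.3} as an immediate consequence via \eqref{3.6.1}, and then prove the bounded inhomogeneity \eqref{9.30.01} by a pointwise case analysis on which branch of the max $H_{K}=\max(H,P_{0}-K)$ is active.

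For \eqref{9.29.2} the key is to exploit $H_{u''}\in\bS_{\delta}$ at every point of differentiability. I would mollify $H$ in $u''$ to obtain $H^{\varepsilon}$, smooth in $u''$ with $H^{\varepsilon}_{u''}\in\bS_{\delta}$ everywhere ($\bS_{\delta}$ is closed and convex). The fundamental theorem of calculus along the segment from $0$ to $u''$ then reduces the problem to the purely algebraic spectral bound
$$
a_{ij}u''_{ij}\leq P_{0}(u'')-\kappa_{1}|u''|\qquad\text{for every }a\in\bS_{\delta},\ u''\in\bS,
$$
with $\kappa_{1}=\kappa_{1}(\delta,d)>0$. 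Writing the eigenvalues of $u''$ as $\lambda_{k}$ and splitting $u''$ into positive and negative spectral parts, $a\in\bS_{\delta}$ yields $a_{ij}u''_{ij}\leq\delta^{-1}\sum\lambda_{k}^{+}-\delta\sum\lambda_{k}^{-}$, and subtracting from the explicit expression for $P_{0}(u'')$ recalled in the paper leaves $\delta^{-1}\sum\lambda_{k}^{+}+(\delta/2)\sum\lambda_{k}^{-}\geq(\delta/2)\sum|\lambda_{k}|$, which dominates a dimensional multiple of $|u''|$. Adding $H(u',0,t,x)\leq H^{+}(u',0,t,x)\leq\bar H_{(+)}+K_{0}|u'|$ and letting $\varepsilon\to 0$ produces \eqref{9.29.2}. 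The bound \eqref{3.6.3} is then immediate: \eqref{3.6.1} gives $P\geq P_{0}+\kappa_{2}|u''|$ for some $\kappa_{2}=\kappa_{2}(\delta,d)>0$, so \eqref{9.29.2} upgrades to $H\leq P-\kappa|u''|+K_{0}|u'|+\bar H_{(+)}$, and separately $P_{0}-K\leq P-\kappa|u''|-K\leq P-\kappa|u''|+K_{0}|u'|+\bar H_{(+)}$; taking the max gives \eqref{3.6.3} with $\kappa=\min(\kappa_{1},\kappa_{2})$.

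The inhomogeneity bound \eqref{9.30.01} is proved pointwise at a differentiability point of $H_{K}$ in $u''$. \emph{Case A:} $H_{K}(u',u'',t,x)=P_{0}(u'')-K$. Since $H_{K}\geq P_{0}-K$ globally with equality at this point, the first-order expansion of $H_{K}$ at $u''$ combined with the convexity of $P_{0}$ forces $H_{Ku''}\in\partial P_{0}(u'')$, and in fact $P_{0}$ itself is differentiable there with the same gradient. But $P_{0}$ is convex and positively one-homogeneous, so every $a\in\partial P_{0}(u'')$ satisfies Euler's identity $a_{ij}u''_{ij}=P_{0}(u'')$ (one inequality from testing the subgradient inequality at $v=2u''$, the other at $v=0$). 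Hence $H_{K}-H_{Ku''_{ij}}u''_{ij}=-K$, trivially within the target bound. \emph{Case B:} $H_{K}=H\geq P_{0}-K$. Combining with \eqref{9.29.2} immediately forces
$$
\kappa_{1}|u''|\leq K+K_{0}|u'|+\bar H_{(+)},
$$
an a priori $L^{\infty}$-bound on $|u''|$ at this point in terms of the lower-order data. Feeding this into $|P_{0}(u'')|\leq N|u''|$, the two-sided bound $|H|\leq|P_{0}|+K+K_{0}|u'|+\bar H_{(+)}$ (upper from \eqref{9.29.2}, lower from $H\geq P_{0}-K$), and the Lipschitz estimate $|H_{u''_{ij}}u''_{ij}|\leq\delta^{-1}\sqrt{d}\,|u''|$, each of $|H|$ and $|H_{u''_{ij}}u''_{ij}|$ is controlled by $N(K_{0}+1)(\bar H_{(+)}+K+|u'|)$, and $\bar H_{(+)}\leq\bar H_{K}$ finishes the proof.

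The main difficulty is Case B: \eqref{9.29.2} must be used not merely as an upper bound on $H$ but as a quantitative non-degeneracy statement whose $\kappa|u''|$-gap pins $|u''|$ to a bounded region depending only on the lower-order data whenever $H$ dominates $P_{0}-K$. Without this coercivity the $|u''|$-factor produced by the Lipschitz constant of $H$ in $u''$ could not be absorbed, and the right-hand side of \eqref{9.30.01} would be forced to depend on $|u''|$ itself.
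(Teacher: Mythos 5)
Your proof is correct and its backbone mirrors the paper's: the Hadamard-type representation $H(u,t,x)=H(u',0,t,x)+a_{ij}u''_{ij}$ with $a\in\bS_{\delta}$, the spectral gap $P_{0}(u'')-\sup_{a\in\bS_{\delta}}a_{ij}u''_{ij}\geq(\delta/2)\sum_{k}|\lambda_{k}(u'')|$, and a two-case analysis of \eqref{9.30.01}. Two tactical deviations are worth flagging. For the representation, the paper uses Rademacher's theorem and Fubini to pick rays of full measure along which Hadamard's formula holds, then extends from almost all $u''$ to all $u''$ via compactness of $\bS_{\delta}$ and continuity of $H$; you instead mollify in $u''$ (using that $\bS_{\delta}$ is closed and convex so that $H^{\varepsilon}_{u''}\in\bS_{\delta}$ pointwise), apply the fundamental theorem of calculus to $H^{\varepsilon}$, and pass to the limit --- both valid, yours avoiding the ray-selection step. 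For \eqref{9.30.01}, the paper splits on whether $\kappa|u''|\geq K_{0}|u'|+\bar{H}_{(+)}+K$, which forces $H_{K}=P_{0}-K$; you split directly on which branch of $\max(H,P_{0}-K)$ is active. Your Case A supplies a subgradient argument showing that at a differentiability point of $H_{K}$ where $H_{K}=P_{0}-K$, the convexity of $P_{0}$ forces $P_{0}$ to be differentiable there with $H_{Ku''}=P_{0u''}$, after which Euler's identity for the positively one-homogeneous $P_{0}$ makes the left-hand side of \eqref{9.30.01} exactly $K$; the paper's appeal to homogeneity tacitly presumes this identification, and your argument cleanly supplies it, including on the touching set $H=P_{0}-K$. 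Your Case B should be stated with strict inequality $H>P_{0}-K$ (so that $H_{K}=H$ in a neighborhood and $H_{Ku''}=H_{u''}$, the borderline being absorbed into Case A); with that precision its estimates coincide with the paper's small-$|u''|$ case and close in the same way.
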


\begin{proof} {\color{black} To prove
  \eqref{9.29.2}  
  fix $u',t,x$ and denote by $D_{H}$ the set
in $\bS$ of points at which $H(u',u'',t,x)$ is differentiable
with respect to $u''$. Since $H$ is Lipschitz continuous
with respect to $u''$, by Rademacher's theorem, the set
$D_{H} $ has full measure. By Fubini's theorem the sets of
full measure contain almost entirely almost any ray, so that
for almost any $u''$ the set of $s\in[0,1]$ such that
$su''\in D_{H} $ also has a full measure.
Furthermore, since the function $H(u',su'',t,x)$
is a Lipschitz continuous function of $s$, it is absolutely continuous
and (Hadamard's formula)
\begin{equation}
                                                       \label{5.30.1}
H(u,t,x)=H(u',0,t,x)+\int_{0}^{1}\frac{\partial}{\partial s}
H(u',su'',t,x)\,ds.
\end{equation}
At points $s\in[0,1]$ such that $su''\in D_{H}$ the function
$H(u',su''+v'',t,x)$ is differentiable with respect to $v''$ at
$v''=0$. Hence by calculus at those points $s$ which have full measure
we obtain 
$$
\frac{\partial}{\partial s}
H(u',su'',t,x)=u''_{ij}H_{u''_{ij}}(u',su'',t,x),
$$
which along with \eqref{5.30.1} and the assumption that $H_{u''}
\in\bS_{\delta}$ shows that for   almost all $u''$
\begin{equation}
                                                  \label{5.30.4}
H(u,t,x)=H(u',0,t,x)+a_{ij}u''_{ij},
\end{equation}
where $a=(a_{ij})\in\bS_{\delta}$ is defined by
$$
a=\int_{0}^{1}
H_{u'' }(u',su'',t,x)\,ds.
$$
We have proved that for almost any $u''$ there exists an
$a\in\bS_{\delta}$ such that \eqref{5.30.4} holds. Since $H$
is continuous with respect to $u''$ and $\bS_{\delta}$
is a compact set, \eqref{5.30.4} holds for any $u''$
with an appropriate $a\in\bS_{\delta}$.
We basically repeated  part of
the proof of Lemma 2.2 in \cite{Kr12.2}.

It follows that
$$
H(u,t,x)\leq (H^{+}(u',0,t,x)-K_{0}|u'|)+K_{0}|u'|
+\sup_{a\in\bS_{\delta}}a_{ij}u''_{ij}.
$$

Here the first term on the right is less than $\bar{H}_{+}$
by definition and the last term equals
$$
- \delta \sum_{k=1}^{d}\lambda_{k}^{-}(u'')
+ \delta^{-1}\sum_{k=1}^{d}\lambda_{k}^{+}(u'')
=P_{0}(u)-(\delta/2) \sum_{k=1}^{d}\lambda_{k}^{-}(u'')
-\delta^{-1}\sum_{k=1}^{d}\lambda_{k}^{+}(u'')
$$
$$
\leq P_{0}(u)-(\delta/2)\sum_{k=1}^{d}|\lambda_{k} (u'')|.
$$
This certainly implies \eqref{9.29.2}.

}

Estimate \eqref{3.6.3} now also follows since $P_{0}\leq P$.
To prove \eqref{9.30.01} note that if
\begin{equation}
                                            \label{9.30.2}
\kappa
 |u'' | 
 \geq   K_{0} |u' |+\bar{H}_{(+)}+K,
\end{equation} 
then by \eqref{9.29.2}
$$
H (u,t,x)\leq P_{0}(u)-\kappa
  |u'' | +K_{0} |u' | +
 \bar{H}_{(+)}\leq P_{0}(u)-K,
$$ 
so that $H_{K}(u,t,x)=P_{0}(u)-K$  and the left-hand side of
\eqref{9.30.01} is just $K$ owing to  the  fact that $P_{0}$ is
positive homogeneous of degree one. On the other hand, if the
opposite inequality holds in \eqref{9.30.2}, then 
it follows from
$$
|H_{K}(u,t,x)|\leq
|H_{K}(u,t,x)-H_{K}(u',0,t,x)|+|H_{K}(u',0,t,x)|
$$
$$
\leq N|u''|+K_{0}|u'|+\bar{H}_{K}\leq
N(K_{0}+1)(|u'|+K +\bar{H}_{(+)}+\bar{H}_{K})
$$
that the left-hand side of \eqref{9.30.01} is dominated by
$$
N(K_{0}+1)(|u'|+K+\bar{H}_{(+)}+\bar{H}_{K}).
$$
After that it only remains to notice that
$$
H(u',0,t,x)\leq\max(H(u',0,t,x),-K)=H_{K}(u',0,t,x),
$$
$$
H^{+}(u',0,t,x)\leq |H_{K}(u',0,t,x)|,\quad \bar{H}_{(+)}
\leq\bar{H}_{K}.
$$
The lemma is proved.
\end{proof}

This lemma
shows that in the rest of the proof of Theorem
\ref{theorem 9.12.1}
 we may   assume that
not only Assumption \ref{assumption 9.23.1} 
is satisfied with $\delta/2$ in place of $\delta$
and \eqref{9.31.2} and \eqref{9.22.1}
 hold  with a constant $N'$, but also at all 
points of
differentiability of $H$ with respect to $u$ 
\begin{equation}
                                              \label{9.10.2}
|H  (u,t,x)-H_{ u''_{ij}}(u,t,x)u''_{ij}  |\leq
  K'_{0}(\bar{H} +K +|u'|),
\end{equation} 
where $K'_{0}=N(\delta,d)(K_{0}+1)$  and
 \begin{equation}   
                                            \label{3.6.4}
H  \leq  P -\kappa
  |u'' | +K_{0} |u' | +
\bar{H},
\end{equation}
where $\kappa$ is the constant from Lemma \ref{lemma 9.29.2}.

As a result of the above arguments we see that to prove Theorem
\ref{theorem 9.12.1} it suffices to prove the following.
\begin{theorem}
                                   \label{theorem 10.5.1}
Suppose that $g\in C^{1,2}(\bar\Omega_{T})$ and
Assumption \ref{assumption 9.23.1} is satisfied with
$\delta/2$ in place of $\delta$. Also assume that 
\eqref{9.10.2}  
holds  at all points of differentiability of  
$H (u,t,x)$ with respect to
$u$.
 Finally, assume that
estimates \eqref{9.31.2} and \eqref{9.22.1}
with a constant $N'$ and
\eqref{3.6.4}  
   hold
 for any $t,s\in \bR$, $x,y\in\bR^{d}$, and $u,v$.
Then the assertions of Theorem \ref{theorem 9.12.1}  hold true.
\end{theorem}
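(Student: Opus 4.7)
\noindent
\textbf{Proof plan for Theorem \ref{theorem 10.5.1}.}
My plan is to solve the problem through finite-difference approximations built on the lattice $\Lambda=\{l_{1},\dots,l_{m}\}$, exploiting the decomposition \eqref{4.8.01} to approximate every $a\in\bS_{\delta/4}$ by positive combinations of the rank-one operators $l_{k}l_{k}^{*}$. Concretely, for mesh size $h>0$ introduce the symmetric second-order differences
\begin{equation*}
\Delta_{h,k} v(t,x)=h^{-2}\bigl(v(t,x+hl_{k})-2v(t,x)+v(t,x-hl_{k})\bigr),
\end{equation*}
and approximate the equation \eqref{9.23.2} by
\begin{equation*}
\partial_{t}v_{h}+\max\bigl(H_{h}[v_{h}],P_{h}[v_{h}]-K\bigr)=0
\end{equation*}
on an $h$-shifted lattice in $\Omega_{T}$ (with Dirichlet extension by $g$ near $\partial'\Omega_{T}$), where $H_{h}$ and $P_{h}$ are obtained by freezing the $x$-variable and replacing $\langle u''l_{k},l_{k}\rangle$ by $\Delta_{h,k}v_{h}$ in the formulas for $H$ and $P$ supplied by Lemma \ref{lemma 9.29.2}. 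By Remark \ref{remark 6.5.1} the operator $F_{K}=\max(H,P-K)$ is uniformly elliptic with constant $\check{\delta}$; thus $\max(H_{h},P_{h}-K)$ is a monotone, uniformly-in-$h$ Lipschitz, nondecreasing function of $\{v_{h}(t,x\pm hl_{k})\}$ in the discrete sense, and the resulting system of ODEs in $t$ has a unique global $C^{1}$ solution by Lipschitz ODE theory.

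\medskip
\noindent
Next I would establish a priori estimates on $v_{h}$ uniformly in $h$. For the $L_{\infty}$ bound I build an explicit barrier of the form $g(t,x)\pm C(T-t)$ with $C\sim\bar{H}+K+\|g\|_{W^{1,2}_{\infty}}$, using \eqref{9.31.2} to bound $H_{h}[g]$ and the fact that $P_{h}[g]$ is controlled by $\|D^{2}g\|_{\infty}$; discrete maximum principle (which applies thanks to uniform ellipticity) yields the $L_{\infty}$ estimate. For the first-order estimate I apply the same scheme to finite differences $\delta_{h,l}v_{h}$, using \eqref{9.22.1} and the Lipschitz dependence of $H$ on $u'$. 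The crucial $L_{\infty}$ estimate on second-order differences is the heart of the matter: here I would use the bounded inhomogeneity \eqref{9.10.2} together with \eqref{3.6.4}, exactly as in \cite{DK} and Krylov's finite-difference framework. Specifically, at a maximum of $\Delta_{h,k}v_{h}$ one differentiates the discrete equation twice, uses \eqref{9.10.2} to relate $H$ to $H_{u''_{ij}}u''_{ij}$ (producing a discrete linear inequality with coefficients in $\bS_{\check{\delta}}$), and uses \eqref{3.6.4} to force the max to select $P_{h}-K$ in the region $|D^{2}v|\gg(K+|Dv|+\bar H)/\kappa$, where the convex Bellman structure of $P$ provides a discrete $W^{2,p}$-type estimate via the discrete Aleksandrov--Bakelman--Pucci inequality.

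\medskip
\noindent
Given these uniform-in-$h$ bounds, the discrete Krylov--Safonov estimate (available for the convex Bellman operators $P-K$ and therefore, by the max structure and $\check\delta$-ellipticity, for $\max(H_{h},P_{h}-K)$) gives the interior Hölder estimate \eqref{2.28.1} with $\alpha=\alpha(d,\delta)$ uniform in $h$, and the discrete Calder\'on--Zygmund estimate yields $\|v_{h}\|_{W^{1,2}_{p}}\le N_{p}(\bar{H}+K+\|g\|_{W^{1,2}_{p}})$. Passing to a subsequence $h_{n}\downarrow0$, I extract a uniform limit $v\in C(\bar\Omega_{T})$, which belongs to $W^{1,2}_{p}(\Omega_{T})$ for every $p<\infty$ by weak compactness; the interior $W^{2,\infty}_{\text{loc}}$ bound with the weight $\rho$ comes from localizing the second-difference estimate away from $\partial\Omega$ using the $C^{2}$-regularity of $\partial\Omega$ and suitable barriers. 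Finally, the $L_{p}$-stability of $F_{K}$ in its arguments (together with a.e.\ convergence of second derivatives on subsequences, which follows from the $W^{1,2}_{p,\text{loc}}$-bound) lets me pass to the limit in the finite-difference equation and conclude $\partial_{t}v+\max(H[v],P[v]-K)=0$ a.e.

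\medskip
\noindent
\textbf{Main obstacle.} The delicate step is the uniform-in-$h$ pointwise bound on $\Delta_{h,k}v_{h}$. The standard Bellman finite-difference calculus requires concavity (or convexity) of the nonlinearity in $u''$; here $H$ is only Lipschitz with derivative in $\bS_{\delta}$. The ersatz term $P-K$ is precisely what rescues the argument: the dichotomy furnished by \eqref{3.6.4} forces the max to switch to the convex operator $P$ whenever $|D^{2}v|$ is large, so the non-convex region of the equation is automatically a priori bounded, and the standard convex Bellman finite-difference theory applies on the complementary region. Carrying this dichotomy through at the discrete level, uniformly in $h$ and independently of $\omega$, is the core technical task and is where the preparations in Section \ref{section 5.3.1} are needed.
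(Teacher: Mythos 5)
Your broad outline---monotone finite differences on the lattice $\Lambda$, discrete maximum principle, the dichotomy furnished by \eqref{3.6.4} that forces the selection of the convex operator $P_h-K$ in the region of large second differences, and passage to the limit---is indeed the skeleton of the paper's argument. But two genuine gaps remain that would derail the plan as written.

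First, and most seriously, you do not address the dependence of $H$ on $u'_0$. When you difference the discrete equation (to estimate $\delta_{h,l}v_h$, or to obtain the modulus of continuity needed for compactness), the zeroth-order coefficient that appears is essentially $\cH_{u'_0}$, and by the growth bound in Lemma \ref{lemma 5.3.1} this coefficient is controlled only by $1+|u'|+|z''|$; on a bounded domain the second-difference estimate is only $\rho$-weighted (compare \eqref{9.18.9}), so this coefficient \emph{blows up near $\partial\Omega$}. The discrete maximum principle then gives nothing uniform in $h$. The paper's resolution is a two-stage argument: first treat $H$ independent of $u'_0$ (where the offending coefficient vanishes and the modulus of continuity \eqref{6.4.2} can be obtained by the argument you sketch), then recover the general case by a Banach fixed-point iteration $w\mapsto v^{w}$ on the frozen-$u'_0$ problems \eqref{6.4.4}, for which the maximum principle and \eqref{9.22.1} give a Gronwall-type contraction. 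Without some such device, your ``apply the same scheme to $\delta_{h,l}v_h$'' step is not justified.

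Second, even in the $u'_0$-independent case, the second-difference estimate of Lemma \ref{lemma 9.16.1} is not self-contained: its right-hand side carries $\bar M_h=\sup\sum_k|\delta_{h,l_k}v_h|$. Closing the loop requires the interpolation of Lemma \ref{lemma 9.22.3}, which is delicate precisely because the second-difference bound degenerates like $\rho^{-1}$ near $\partial\Omega$; one must take the interpolation radius $\sim\varepsilon\rho(x)$, track how many lattice points fit, and absorb the resulting $\varepsilon\bar M_h$ term. Your proposal treats first-order and second-order bounds as independent ingredients, and omits this circularity-and-interpolation step entirely. Minor points: the paper does not invoke discrete Aleksandrov--Bakelman--Pucci or discrete Krylov--Safonov at any stage; the Hölder estimate \eqref{2.28.1} and the $W^{1,2}_p$ estimate \eqref{1.13.2} are obtained at the \emph{continuous} level after the $W^{1,2}_{\infty,\text{loc}}$ bound is in hand, by rewriting the equation as \eqref{6.5.5} (linear with $\bS_{\check\delta}$-valued coefficients) for Hölder, and as $\partial_t u+P(D^2u)+G=0$ with bounded $G$ for $W^{1,2}_p$ via \cite{DKL}; and the monotone structure of the scheme comes from the representation \eqref{9.8.1} in terms of $\cH$ (Theorem 5.2 of \cite{Kr11}), not from Lemma \ref{lemma 9.29.2}.
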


{\color{black}
\mysection{Some auxiliary results}
                                       \label{section 5.3.1}
In this section the assumptions of Theorem \ref{theorem 10.5.1}
are supposed to be satisfied.
First we show that one can rewrite $H[v]$ in such a way
that only pure second order derivatives along $l_{k}$'s
 of $v$ enter ($l_{k}$'s are introduced in connection with
\eqref{6.1.2}).
Recall that $K'_{0}$ is introduced after \eqref{9.10.2},
  define
$$
I=[-K'_{0} ,K'_{0}],\quad J=[-2K'_{0} ,2K'_{0}],
\quad C''=I\times\bS_{\delta/2},\quad
B''=J\times
\bS_{\delta/4},
$$
and also recall that $H_{u''}\in\bS_{\delta/2}$
at all points of differentiability of $H$ with respect to $u''$.
In terminology of \cite{Kr11} this means that for any $(t,x)$
$$
H(\cdot,t,x)\in\frH_{C''}\subset \frH_{B''} .
$$

Next,  for $u',(t,x)\in\bR^{d+1}$,  and $y''\in\bS$ 
introduce   
$$
B(u',y'',t,x)=\{(f,l'')\in B'':(\bar{H} +K +|u'|)
f+l_{ij}y''_{ij}\leq H(u',y'',t,x)\}.
$$
As follows from \cite{Kr11} or from the properties of $H$, the sets
$B(u',y'',t,x)$ are closed and nonempty.
We now  recall  \eqref{4.8.01} and for $u', (t,x)\in\bR^{d+1}$,
and $z''\in \bR^{m}$  ($m$ is the same as in  \eqref{4.8.01}
and $z''$ in this section is a vector rather than a matrix) define
$$
\cH(u',z'',t,x)
=\inf_{y''\in\bS}\max_{(f,l'')\in
B(u',y'',t,x)}\big[(\bar{H} +K +|u'|)
f+\lambda_{k}(l'')z_{k}''\big].
$$}

By Theorem 5.2 and Corollary 5.3 of \cite{Kr11}
{\color{black}(modified in an obvious way by replacing $1+|u'|$
with $\bar{H} +K +|u'| $)},
the function
$\cH$ is  measurable, Lipschitz continuous
with respect to $ z'' $ with constant
 independent 
of $(u',t,x)$,
\begin{equation}
                                    \label{9.8.1}
H(u,t,x)=\cH (u',\langle u''l_{1},l_{1}\rangle,
...,\langle u''l_{m},l_{m}\rangle,t,x)
\end{equation}
for all values of arguments, 
{\color{black} where $l_{k}$ are taken from
\eqref{6.1.2}}, and at all points
of differentiability of $\cH $ with respect to $z''$ we have  
\begin{equation}
                                                \label{9.8.2}
D_{z''}\cH (u',z'',t,x)
\in [ \hat\delta,\hat\delta ^{-1}]^{m},
\end{equation}
\begin{equation}
                                      \label{9.8.3}
 {\color{black}(\bar{H} +K +|u'|)}^{-1}
[\cH (u',z'',t,x)-\langle z'', 
 D _{ z'' } \cH (u',z'',t,x)\rangle] \in J,
\end{equation}
\begin{equation}
                                       \label{9.8.4}
|\cH (u',z'',t,x)-\cH (u',z'',s,y)|
\leq N(|t-s|+|x-y|)(1+|z''|+|u'|),
\end{equation}
where $N$ is a constant independent of $u',z'',t,x,s,y$.

{\color{black}

We also need the following result in which assumption
\eqref{9.22.1} is crucial.

\begin{lemma}
                                         \label{lemma 5.3.1}
The function $\cH$ is locally Lipschitz continuous
with respect to $u'$ and at all points of its differentiability
with respect to $u'$ we have
$$
|\cH_{u'}(u',z'',t,x)|\leq N(1+|u'|+|z''|),
$$
where the constant $N$ is independent of $(u',z'',t,x)$.
\end{lemma}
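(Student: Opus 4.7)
The plan is to establish the two-sided Lipschitz bound
$$|\cH(u',z'',t,x)-\cH(v',z'',t,x)|\leq N(1+|u'|+|v'|+|z''|)|u'-v'|$$
for arbitrary $u',v'\in\bR^{d+1}$, from which local Lipschitz continuity in $u'$ follows, together with the pointwise gradient estimate $|\cH_{u'}|\leq N(1+|u'|+|z''|)$ at every point of differentiability via Rademacher's theorem.

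Writing $\cH(u',z'',t,x)=\inf_{y''\in\bS}F(u',y'',z'',t,x)$ with inner sup
$$F(u',y'',z'',t,x)=\sup_{(f,l'')\in B(u',y'',t,x)}\big[M(u')f+\lambda_k(l'')z''_k\big],\qquad M(u'):=\bar{H}+K+|u'|,$$
and using $|\inf_y A(y)-\inf_y B(y)|\leq\sup_y|A(y)-B(y)|$, it suffices to bound $|F(u',y'',z'',t,x)-F(v',y'',z'',t,x)|$ uniformly in $y''$. The $u'$-dependence of $F$ enters only through $M(u')$ (which is $1$-Lipschitz in $u'$) in both the objective and the defining inequality of $B$, and through $H(u',y'',t,x)$ (which by assumption \eqref{9.22.1} is $N'$-Lipschitz in $u'$) in the defining inequality alone. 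Note that $\lambda_k(l'')z''_k$ is independent of $u'$.

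The core construction goes as follows. Given a near-optimal pair $(f^*,l^*)\in B(u',y'',t,x)$ for $F(u',y'',z'',t,x)$, first try to keep $l'=l^*$ and modify only $f$ by setting $f'=f^*-\gamma$ with $\gamma=(2K'_0+N')|u'-v'|/M(v')$. A short computation using $|M(v')-M(u')|\leq|u'-v'|$ and $|H(u',y'',t,x)-H(v',y'',t,x)|\leq N'|u'-v'|$ verifies $(f',l^*)\in B(v',y'',t,x)$ whenever $f'\in J:=[-2K'_0,2K'_0]$. The induced change in objective, $(M(v')-M(u'))f^*-M(v')\gamma$, is of order $|u'-v'|$ and carries no $|z''|$ factor since $\lambda_k(l^*)z''_k$ is unchanged. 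Swapping $u'$ and $v'$ handles the symmetric upper-boundary case $f^*=2K'_0$ analogously.

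The main obstacle is the remaining boundary case $f^*=-2K'_0$, where $f'<-2K'_0$ falls outside $J$. Here one instead fixes $f'=-2K'_0$ and perturbs $l^*$ to an $l'\in\bS_{\delta/4}$ near $l^*$ satisfying the single remaining constraint $l'\cdot y''\leq H(v',y'',t,x)+2K'_0 M(v')$. The feasibility defect is of order $|u'-v'|$; for $|y''|$ bounded away from zero one kills it by shifting $l^*$ in the direction $-y''/|y''|$ by a distance $\lesssim|u'-v'|$ (small enough to stay in $\bS_{\delta/4}$), and for $|y''|$ small the constraint is already satisfied thanks to the uniform lower bound $H(v',0,t,x)+2K'_0 M(v')\geq c>0$ that follows from $K'_0\geq K_0+1$ and $K>0$. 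The resulting change $|\lambda_k(l')-\lambda_k(l^*)|\cdot|z''_k|\lesssim|l'-l^*|\cdot|z''|\lesssim|u'-v'|\cdot|z''|$, controlled by the Lipschitz continuity of each $\lambda_k$ on the compact set $\bS_{\delta/4}$ (from their real-analyticity), is precisely the source of the $|z''|$ term in the bound. Combining the two cases and invoking Rademacher's theorem yields the claim.
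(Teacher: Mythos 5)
Your overall strategy is the same as the paper's: write $\cH=\inf_{y''}F$ with $F$ the inner sup over $B(u',y'',t,x)$, reduce to a one-sided bound on $F(u',\cdot)-F(v',\cdot)$ uniform in $y''$, and establish it by perturbing a (near-)optimal point of $B(u',y'')$ into a feasible point of $B(v',y'')$ while controlling the change in the objective via the Lipschitz continuity of the $\lambda_k$'s. Your ``case~1'' calculation (shift $f$ alone) is correct, and you are right that this introduces no $|z''|$ factor, and you correctly identify that $K>0$ is the source of the uniform margin.

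The trouble is in your ``case~2'' (boundary case $f^*=-2K'_0$), which contains a genuine gap. You propose to perturb $l^*$ in the direction $-y''/|y''|$ and assert, without argument, that a shift of size $\lesssim|u'-v'|$ stays in $\bS_{\delta/4}$. That is false in general: $l^*\in\bS_{\delta/4}$ can lie on the boundary of $\bS_{\delta/4}$, and then $-y''/|y''|$ may point outward, so \emph{no} positive shift of that form remains admissible. (Concretely, $d=1$, $l^*=\delta/4$, $y''>0$.) Your fallback for small $|y''|$ and your observation $H(v',0,t,x)+2K'_0M(v')\ge 2K'_0K>0$ are both sound, but there is no reason the ``moderate $|y''|$ with boundary $l^*$'' regime is ever reached by one of your two fallbacks; the argument needs to handle it uniformly. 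The fix is to move the constraint-violating $l^*$ not in direction $-y''/|y''|$ but toward a point of $\bS_{\delta/4}$ guaranteed to satisfy the $v'$-constraint with a uniform margin of order $K>0$ (the $y''$-minimizer over $\bS_{\delta/4}$ would do, or, as the paper does, the anchor $l''_0\in\bS_{\delta/2}$ coming from the representation $H(u',y'')=M(u')f_0+l''_{0ij}y''_{ij}$). Then convexity of $\bS_{\delta/4}$ automatically keeps the perturbed $l'$ admissible, and the required fraction is $\lesssim|u'-v'|/(K'_0K)$. This is essentially the paper's device: a single convex-combination shift $\big(f_t(f)-K'_0(1-t),\,l''_t(l'')\big)$ toward the strictly interior anchor $(f_0,l''_0)\in C''\subset\mathrm{int}\,B''$, which handles both of your cases at once and never leaves $B''$. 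Replacing your ad hoc directional perturbation with this convex-combination perturbation would close the gap. (Also, your remark about a ``symmetric upper-boundary case $f^*=2K'_0$'' is a red herring: since you only ever decrease $f$, there is no issue at the upper endpoint of $J$.)
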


\begin{proof} The reader might find many similarities 
of the argument below with 
the proof of Theorem 4.6 of \cite{Kr11}. It suffices to show
that there exist constants $N,\varepsilon_{0}>0$ such that
for all $u',v',(t,x)\in\bR^{d+1}$, $z''\in\bR^{m}$,
and $y''\in\bS$, with $|v'|\leq\varepsilon_{0}$, we have
$$
\max_{(f,l'')\in
B(u'+v',y'',t,x)}\big[(\bar{H}+K
+|u'+v'|)f+\lambda_{k}(l'')z_{k}''\big]
$$
\begin{equation}
                                                              \label{6.3.3}
\leq \max_{(f,l'')\in
B(u',y'',t,x)}\big[(\bar{H}+K+|u'|)f+\lambda_{k}(l'')z_{k}''\big]
+N|v'|(1+|u'|+|z''|).
\end{equation}
For simplicity of notation we drop the arguments $t,x$ below.
Fix $u',v',y''$.
Inequality \eqref{9.8.3} shows that there is $(f_{0},l''_{0})
\in C''$ such that
$$
(\bar{H}+K+|u'|)f_{0}+l''_{0ij}y''_{ij}=H(u',y'').
$$
For $t\in[0,1]$ and $(f,l'')\in B''$ define
$$
f_{t}(f)=(1-t)f_{0}+tf,\quad l''_{t}(l'')=(1-t)l''_{0}+tl''
$$
and  observe that since $C''$ lies in the interior of $B''$,
for any $t\in[0,1]$ 
$$
(f_{t}(f)-K_{0}'(1-t),l''_{t}(l''))\in B''.
$$
Now if $(f,l'')\in B(u'+v',y'')$, then
$$
I:=(\bar{H}+K+|u' |)[f_{t}(f) -K_{0}'(1-t)]
+l''_{tij}(l'')y''_{ij}
$$
$$
=t[(\bar{H}+K+|u'+v'|)  f+ l''_{ij}y''_{ij}]
+t(|u'|-|u'+v'|)f
$$
$$
 +(1-t)H(u',y'') -K_{0}'(1-t)
(\bar{H}+K+|u' |).
$$
Here the first term on the right is by definition less than $tH(u'+v',y'')
\leq tH(u',y'')+N'|v'|$, the second one is less that
$2K_{0}'|v'| $. Hence
$$
I\leq  H(u' ,y'')+(N'+2K_{0}')|v'|-K_{0}'(1-t)
(\bar{H}+K)\leq  H(u' ,y''),
$$
provided that 
\begin{equation}
                                                            \label{6.3.5}
(N'+2K_{0}')|v'|\leq K_{0}'(1-t)(\bar{H}+K).
\end{equation}
In particular, for those $v'$ and $t$ we have
$(f_{t}(f) -K_{0}'(1-t),l''_{t}(l''))\in B(u',y'')$ so that
$$
J:=\max_{(f,l'')\in B(u'+v',y'')}[(\bar{H}+K+|u'|)[f_{t}(f) -
K_{0}'(1-t)]
+\lambda_{k}(l''_{t}(l''))z''_{k}]
$$
$$
\leq \max_{(f,l'')\in
B(u',y'' )}\big[(\bar{H}+K+|u'|)f+\lambda_{k}(l'')z_{k}''\big].
$$

Furthermore, $\lambda_{k}$ are Lipschitz continuous and
$|\lambda_{k}(l''_{t}(l''))-\lambda_{k}(l'')|\leq N(1-t)$,
where $N$ depends only on $\delta$, $d$, and the Lipschitz
constants of $\lambda_{k}$. Also $|f_{t}(f)-f|\leq 4K_{0}'(1-t)$.
It follows that
$$
J\geq-N(1-t)(1+|u'|+|z''|)+
\max_{(f,l'')\in B(u'+v',y'')}[(\bar{H}+K+|u'|)f  
+\lambda_{k}(  l'' )z''_{k}]
$$
$$
\geq-N(1-t)(1+|u'|+|z''|)-2K_{0}'|v'|
$$
$$
+
\max_{(f,l'')\in B(u'+v',y'')}[(\bar{H}+K+|u'+v'|)f  
+\lambda_{k}(  l'' )z''_{k}],
$$
where $N$ is independent of $u',v',z'',y''$ (and $(t,x)$).
We thus have obtained \eqref{6.3.3} with
$N(1-t)(1+|u'|+|z''|)+N|v'|$ in place of 
$N|v'|(1+|u'|+|z''|)$ provided that \eqref{6.3.5}
holds. After taking (here we use that $K>0$)
$$
\varepsilon_{0}=K_{0}'(\bar{H}+K)/(N'+2K_{0}'),\quad (>0),
\quad
1-t=|v'|/\varepsilon_{0}\quad(\in[0,1])
$$
we come to the original form of \eqref{6.3.3} and the lemma is proved.

\end{proof}

Having representation \eqref{9.8.1} and having in mind
finite-differences make it natural to use the following
''monotone'' approximations of $H[v]$ and $P[v]$
with finite difference operators.} 
For $h>0$ and vectors $l$ introduce
$$
T_{h,l}\phi(x)=\phi(x+hl),\quad \delta_{h,l}=h^{-1}(T_{h,l}-1),
\quad \Delta_{h,l}=h^{-2}(T_{h,l}-2+T_{h,-l}).
$$
Also set {\color{black}(recall that $\cP$ is introduced in
\eqref{6.1.1})}
$$
{\color{black}\cH_{K}=\max(\cH,\cP-K),}\quad
P_{h}[v](t,x)=\cP ( 
\Delta_{h}v(t,x)),
$$
where
 $$
\Delta_{h} v=( \Delta_{h,l_{1}}v
,..., \Delta_{h,l_{m}}v).
$$
Similarly we introduce
$$
H_{h}[v](t,x)
=\cH (v(t,x),\delta_{h}v(t,x),\Delta_{h}v(t,x)),
$$
where  
$$
\delta_{h} v=( \delta_{h,e_{1}}v ,..., \delta_{h,e_{d}}v ), 
$$
and
$H_{K,h}[v]=\max(H_h[v],P_h[v]-K)$.

Owing to \eqref{9.8.1} we have $\cH(u',0,t,x)
=H(u',0,t,x)$ which in light of \eqref{9.8.2}
and Assumption \ref{assumption 9.23.1} (ii)
yields the following.
\begin{lemma}
                                          \label{lemma 9.14.1}
For all values of arguments
$$
\cH \leq  \cP -  (\hat\delta /2) 
  \sum_{k=1}^{m}|z''_{k}| +K_{0}|u'|+\bar{H}. 
$$
\end{lemma}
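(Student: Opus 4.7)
The plan is to bound $\cH(u',z'',t,x)$ by reconstructing it from its value at $z''=0$ via a line integral, then to compare the linear-in-$z''$ remainder with $\cP(z'')-(\hat\delta/2)\sum_k|z''_k|$ term by term. The starting point is the identity $\cH(u',0,t,x)=H(u',0,t,x)$, which follows from \eqref{9.8.1} applied at $u''=0$; by Assumption \ref{assumption 9.23.1}(ii) this gives $\cH(u',0,t,x)\leq K_{0}|u'|+\bar H$.

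Since $\cH$ is Lipschitz in $z''$, the map $s\mapsto \cH(u',sz'',t,x)$ is Lipschitz on $[0,1]$, hence absolutely continuous. Using Rademacher and Fubini exactly as in the proof of Lemma \ref{lemma 9.29.2}, for almost every $z''$ the set of $s\in[0,1]$ at which $\cH(u',\cdot,t,x)$ is differentiable at $sz''$ has full measure, and at those points the chain rule and \eqref{9.8.2} apply. Hadamard's formula then yields
$$
\cH(u',z'',t,x)=\cH(u',0,t,x)+\sum_{k=1}^{m}a_{k}\,z''_{k},
\qquad a_{k}:=\int_{0}^{1}\cH_{z''_{k}}(u',sz'',t,x)\,ds\in[\hat\delta,\hat\delta^{-1}].
$$

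Next I would show the elementary inequality
$$
\sum_{k=1}^{m}a_{k}z''_{k}+\tfrac{\hat\delta}{2}\sum_{k=1}^{m}|z''_{k}|\ \leq\ \cP(z''),
$$
valid whenever $a_{k}\in[\hat\delta,\hat\delta^{-1}]$. Setting $b_{k}=a_{k}+(\hat\delta/2)\,\mathrm{sgn}(z''_{k})$ for $z''_{k}\neq 0$ (and $b_{k}=a_{k}$ otherwise), the left side equals $\sum_{k}b_{k}z''_{k}$, and one checks $b_{k}\in[\hat\delta/2,2\hat\delta^{-1}]$ using $\hat\delta\leq\delta/4\leq 1/4$, which forces $\hat\delta/2\leq\hat\delta^{-1}$. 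Thus $\sum_{k}b_{k}z''_{k}\leq\cP(z'')$ by the very definition \eqref{6.1.1}.

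Combining these ingredients gives the desired bound for almost every $z''$, and the inequality extends to every $z''$ by continuity of $\cH$, $\cP$, and $\sum_{k}|z''_{k}|$ in $z''$. I expect the only mild subtlety to be the justification of the chain-rule step along the ray $s\mapsto sz''$; this is handled by the same Rademacher/Fubini device that the paper already uses, so no real obstacle is anticipated.
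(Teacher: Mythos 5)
Your proof is correct and takes essentially the same route the paper intends: the paper dismisses this lemma in a single sentence (it ``yields'' from \eqref{9.8.1}, \eqref{9.8.2}, and Assumption \ref{assumption 9.23.1}(ii)), and you simply supply the standard details, namely the Rademacher/Fubini/Hadamard device already used in the proof of Lemma \ref{lemma 9.29.2} together with the elementary observation that perturbing each $a_{k}\in[\hat\delta,\hat\delta^{-1}]$ by $\pm\hat\delta/2$ keeps the coefficients inside the range $[\hat\delta/2,2\hat\delta^{-1}]$ appearing in the definition \eqref{6.1.1} of $\cP$.
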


Introduce $B$ as the smallest closed
 ball containing $\Lambda$ {\color{black}(recall
its definition \eqref{6.1.2})} and set
$$ \Omega^{h}=\{x\in \Omega:x+hB\subset \Omega\} =\{x:\rho(x)>
\lambda h\}, $$ where $\lambda$ is the radius of $B$.

For $h>0$ such that $\Omega^{h}\ne\emptyset$  
consider the equation
\begin{equation}
                                              \label{2.25.3}
\partial_t v+H_{K,h}[v]=0\quad \text{in}\quad 
[0,T]\times\Omega^{h}  
 \end{equation}
 with   boundary condition
\begin{equation}
                                              \label{2.25.4}
v=g\quad\text{on}\quad \Big(\{T\}\times
 \Omega^{h}\Big)\cup\Big([0,T]\times(\bar \Omega\setminus
\Omega^{h})\Big) . 
 \end{equation}

In view of Picard's method  of  successive 
iterations, for any $h>0$, there
exists a unique bounded solution $v=v_{ h}$ of
\eqref{2.25.3}--\eqref{2.25.4}.
Furthermore, $\partial_{t}v_{h}(t,x)$ is bounded and
is continuous
with respect to $t$ for any $x$.  {\color{black}
A solution of \eqref{9.23.2}, whose existence
is claimed in Theorem \ref{theorem 10.5.1}, will be obtained
as the limit of a subsequence of $v_{h}$ as $h\downarrow0$.
Therefore, we need to have appropriate bounds on $\partial_{t}
v_{h}$ and the first- and second-order differences in $x$
of $v_{h}$.}

Below in this section 
by $h_{0}$ and $N$ with occasional indices we denote various
(finite positive) constants depending only on $\Omega$,
$\{l_{1},...,l_{m}\}$,
$d$, $K_{0}$, $T$, and $\delta$, 
unless specifically stated otherwise.

 Denote
 $$
 \Lambda_{1}= \Lambda,\quad
\Lambda_{n+1}=
 \Lambda_{n}+ \Lambda ,\quad n\geq1,
\quad \Lambda_{\infty}=\bigcup_{n}\Lambda_{n},
\quad \Lambda^{h}_{\infty}=h\Lambda_{\infty}. 
$$
Observe that the set of points in $\Lambda^{h}_{\infty}$ lying in any 
bounded domain is finite since the $l_{i}$'s have 
integral coordinates.

We need a particular case of Theorem 4.3 of \cite{DK}.
Let
$Q^{o}$ be a nonempty  subset
of $(0,T)\times \Lambda_{\infty}^{h}$, which is open
in the relative topology of
$(0,T)\times\Lambda_{\infty}^{h}$. We
introduce $\hat{Q}^{o}$ as the
set of points
$(t_{0},x_{0})\in(0,T]\times\Lambda_{\infty}^{h}$
for each of which there exists a sequence
$t_{n}\uparrow t_{0}$   such that
  $(t_{n},x_{0})\in Q^{o} $.
Observe that $Q^{o}\subset\hat{Q}^{o}$.
Also define
\begin{equation}
                                                   \label{6.10.1}
Q=\hat{Q}^{o}\cup\{(t,x+h\Lambda):(t,x)\in Q^{o}\}.
\end{equation}

For $x\in  \Lambda_{\infty}^{h}$
we denote by $Q^{o}_{|x}$ the $x$-section of $Q^{o}$:
$\{t:(t,x)\in Q^{o}\}$. Assume that
$$
Q^{o}\subset G:=\{(t,x)\in\Omega^{h}_{T}:(\hat\delta/2)\sum_{k=1}^{m}
|\Delta_{h,l_{k}}v_{h}(t,x)|> 
$$
\begin{equation}
                                             \label{9.25.1}
>\bar{ H}+K+K_{0}
\big(|v_{h}(t,x)|+M_{h}(t,x)  
\big)\},
\end{equation}
where
$$
M_{h}(t,x)=\sum_{k=1}^{m}|\delta_{h,l_{k}}
v_{h}(t,x)|,
$$
so that, owing to Lemma \ref{lemma 9.14.1},
$\partial_{t}v_{h}+P_{h}[v_{h}]-K\leq0$ in 
$[0,T]\times\Omega^{h}$ and
\begin{equation}
                                              \label{9.16.7}
\partial_t v_h+P_{h} [v_{ h}]=K\quad\text{in}\quad Q^{o}.
\end{equation} 

Also observe that, owing to the continuity of
$v_{h}$ in $t$, $G\cap [(0,T)\times \Lambda_{\infty}^{h}]$ is open
in the relative topology of
$\bR \times\Lambda_{\infty}^{h}$.

  To proceed with estimating $\partial_{t}v_{h}$
and second-order differences
of $v_{h}$ we introduce the following.
Take a function $\eta\in C^{\infty}(\bR^{d})$ with bounded
derivatives, such that $|\eta|\leq1$ and set
$\zeta=\eta^{2}$, 
$$
 |\eta'(x)|_{h} =\sup_{
k}|\delta_{h,l_ k}\eta  (x)|,\quad  |\eta''(x)|_{h}
=\sup_{ k}|
\Delta_{h,l_ k}\eta  (x)|, 
$$
 $$
 \|\eta'\|_{h}=\sup_{ \Lambda_{\infty}^{ h }}|\eta'
|_{h},\quad
 \|\eta''\|_{h}=\sup_{
\Lambda_{\infty}^{ h}}|\eta''
|_{h}.
$$

Here is a particular case of Theorem 4.3 of \cite{DK}
we need.

\begin{lemma}
                                         \label{lemma 9.13.2}
Assume that $ Q \subset [0,T]\times\Omega^{h}$. Then
there exists a constant $N=N(m,\delta)\geq1$ such that
on $Q^{o}$ for any $k=1,...,m$
$$
\zeta^{2}[ ( \Delta_{h,l_k}v_{h})^{-}]^{2}\leq \sup_{
 Q\setminus Q^{o} }\zeta^{2}[
( \Delta_{h,l_k}v_{h})^{-}]^{2} + N(\|\eta''\|_{h}+\|\eta'\|_{h}^{2})
\bar{W}_{k},
$$
where
$$
\bar{W}_{k}=\sup_{ Q }(|\delta_{h,l_k}v_{h}|^{2}+ 
|\delta_{h,-l_k}v_{h}|^{2}).
$$
\end{lemma}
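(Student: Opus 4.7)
\medskip

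\noindent\textbf{Proof plan.} The statement is declared a particular case of Theorem~4.3 of \cite{DK}, so the primary plan is to rewrite \eqref{9.16.7} in the form required there and appeal to it. On $Q^{o}$, $v_{h}$ satisfies the discrete Bellman equation $\partial_{t}v_{h}+\cP(\Delta_{h}v_{h})=K$ with $\cP$ convex, positively homogeneous, and uniformly elliptic along $\Lambda$, and the cutoff $\zeta=\eta^{2}$ together with the seminorms $\|\eta'\|_{h},\|\eta''\|_{h}$ is precisely what \cite{DK} works with. One only has to observe that the set $Q$ from \eqref{6.10.1} matches the parabolic ``closure'' of $Q^{o}$ used there and that $Q\subset[0,T]\times\Omega^{h}$ is the hypothesis of the lemma, so that all finite differences appearing in the argument are defined.

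For a self-contained argument I would first linearize. Fix $k$, set $w=\Delta_{h,l_{k}}v_{h}$, and pick a measurable selection $a^{*}(t,x)\in[\hat{\delta}/2,2\hat{\delta}^{-1}]^{m}$ of the argmax in
$$
\cP(\Delta_{h}v_{h}(t,x))=\sum_{j=1}^{m}a^{*}_{j}(t,x)\Delta_{h,l_{j}}v_{h}(t,x).
$$
Since the same vector $a^{*}(t,x)$ is admissible (though generally not optimal) at $(t,x\pm hl_{k})$, one has $\cP(\Delta_{h}v_{h}(t,x\pm hl_{k}))\ge\sum_{j}a^{*}_{j}(t,x)\Delta_{h,l_{j}}v_{h}(t,x\pm hl_{k})$. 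Taking the second difference in $l_{k}$ of \eqref{9.16.7} and using commutativity of $\Delta_{h,l_{k}}$ and $\Delta_{h,l_{j}}$ produces the discrete parabolic subsolution inequality
$$
\partial_{t}w+\sum_{j=1}^{m}a^{*}_{j}(t,x)\Delta_{h,l_{j}}w\le 0\quad\text{on }Q^{o},
$$
with coefficients bounded between $\hat{\delta}/2$ and $2\hat{\delta}^{-1}$.

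I would then run a discrete Bernstein argument on $\phi=\zeta^{2}(w^{-})^{2}$. If the maximum of $\phi$ over $Q$ is attained on $Q\setminus Q^{o}$, the bound is immediate; otherwise it is attained at some $(t_{0},x_{0})\in Q^{o}$ with $w(t_{0},x_{0})<0$, where $\partial_{t}\phi$ and $\Delta_{h,l_{j}}\phi$ have the appropriate one-sided signs forced by the relative topology underlying $Q^{o}$ and $\hat{Q}^{o}$. Multiplying the subsolution inequality by $-2\zeta^{2}w^{-}$ converts it into a subsolution-type inequality for $(w^{-})^{2}$, and the discrete Leibniz rule
$$
\Delta_{h,l}(fg)=(\Delta_{h,l}f)\,g+f\,(\Delta_{h,l}g)+(\delta_{h,l}f)(\delta_{h,l}g)+(\delta_{h,-l}f)(\delta_{h,-l}g)
$$
applied to the product $\zeta^{2}\cdot(w^{-})^{2}$ exhibits the extra cross terms as products of first-order differences of $\eta$ with first-order differences of $w^{-}$. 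A Cauchy-inequality absorption of the good quadratic term $\zeta^{2}\sum_{j}a^{*}_{j}|\delta_{h,\pm l_{j}}w^{-}|^{2}$ then leaves the error $N(\|\eta''\|_{h}+\|\eta'\|_{h}^{2})\bar W_{k}$ with $N=N(m,\delta)$. The main obstacle is precisely this cutoff bookkeeping: the discrete Leibniz rule carries an extra pair of terms compared to its continuous counterpart, so one must track carefully the evaluation points $x,x\pm hl_{j}$ and the locus where $w$ changes sign, especially to replace $L^{a}(w^{-})$ by the right combination of $L^{a}w$ and $|\delta_{h,\pm l_{j}}w^{-}|^{2}$. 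Once these identities are in place, the calculation mirrors the standard parabolic Bernstein argument and produces exactly the claimed bound.
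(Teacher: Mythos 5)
The paper gives no proof of Lemma~\ref{lemma 9.13.2}: it is stated verbatim as a particular case of Theorem~4.3 of \cite{DK}, so your primary plan (match the hypotheses and invoke that theorem) is exactly the paper's route, and all one really has to check is that $P_{h}[v_{h}]=\cP(\Delta_{h}v_{h})$ is a discrete Bellman operator in the class treated there, that the cutoff conventions ($\zeta=\eta^{2}$, $\|\eta'\|_{h}$, $\|\eta''\|_{h}$) agree, and that $Q\subset[0,T]\times\Omega^{h}$ makes all required shifts admissible. That part of your answer is correct.

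Your self-contained sketch has the right skeleton (measurable argmax selection, $\Delta_{h,l_{k}}$ of \eqref{9.16.7} giving $\partial_{t}w+\sum_{j}a^{*}_{j}\Delta_{h,l_{j}}w\le0$ on $Q^{o}$ for $w=\Delta_{h,l_{k}}v_{h}$, discrete Bernstein with cutoff), but the decisive step is asserted rather than established and, as written, would give the wrong error term. After the discrete Leibniz rule and a Cauchy absorption of the cross terms $(\delta_{h,\pm l_{j}}\zeta)(\delta_{h,\pm l_{j}}(w^{-})^{2})$ against $\zeta^{2}\sum_{j}a^{*}_{j}|\delta_{h,\pm l_{j}}w^{-}|^{2}$, the residue is $\sim(\|\eta''\|_{h}+\|\eta'\|_{h}^{2})\sup(w^{-})^{2}$, i.e.\ it involves the very quantity you are estimating, not $\bar{W}_{k}$. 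Since $\|\eta'\|_{h}^{2}+\|\eta''\|_{h}$ is of size $\mu^{-2}$ in the subsequent application (Lemma~\ref{lemma 9.16.1}), this residue cannot simply be absorbed, and passing from it to the stated $\bar{W}_{k}=\sup_{Q}(|\delta_{h,l_{k}}v_{h}|^{2}+|\delta_{h,-l_{k}}v_{h}|^{2})$ is nontrivial: $\bar{W}_{k}$ is a full order better than the crude bound $(w^{-})^{2}\lesssim h^{-2}\bar{W}_{k}$, and singling out the one direction $\pm l_{k}$ requires a specific algebraic device (the discrete Bochner/Leibniz identity applied to $v_{h}$ itself, combined with first-order optimality at the maximum point) that your sketch does not supply. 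So: the appeal to \cite{DK} is the paper's argument and is fine; the ``for a self-contained proof'' part is a plausible outline, but the final sentence claiming the error term pops out of ``Cauchy absorption'' is a genuine gap.
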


{\color{black}
To investigate   $v_{h}$ near the boundary
we  need  part of  Lemma 8.8 of \cite{Kr11}.  
\begin{lemma}
                                           \label{lemma 4.1.1}
For any constants $\delta_{0},N_{0}\in(0,\infty)$
there exists a  constant  $N$,  
depending only on $ \delta_{0},N_{0},\Omega$,  and
there exists a function $\Psi\in C^{2}(\bar{Q})$
 such that   $N\rho\geq\Psi\geq\rho$ in $\Omega$ and
for all sufficiently small $h$ on $\Omega^{h}$
$$
\sum_{j=1}^{m}a_{j}\Delta_{h,l_{j}}\Psi + N_{0}
\sum_{j=1}^{m}|\delta_{h,l_{j}}\Psi |\leq-1,
$$
whenever $  \delta_{0} ^{-1}\geq a_{j}\geq \delta_{0} $.

\end{lemma}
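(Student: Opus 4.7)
The plan is to first construct a function $\Psi\in C^{4}(\bar\Omega)$ with $\rho\leq\Psi\leq N\rho$ satisfying the \emph{continuous} infinitesimal version of the desired inequality, namely
$$
\sum_{j=1}^{m} a_{j}\langle D^{2}\Psi\,l_{j},l_{j}\rangle + N_{0}\sum_{j=1}^{m}|\langle D\Psi,l_{j}\rangle|\leq -2
$$
on $\bar\Omega$, uniformly in $a_{j}\in[\delta_{0},\delta_{0}^{-1}]$, and then to pass to the finite-difference inequality on $\Omega^{h}$ via Taylor expansion for small $h$.

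For the construction I would start from a ``regularized distance'' $\tilde\rho\in C^{4}(\bar\Omega)$ satisfying $\rho\leq\tilde\rho\leq N\rho$ globally and $\tilde\rho=\rho$ in a thin collar $\{\rho<r_{0}\}$ of $\partial\Omega$; such a $\tilde\rho$ exists because $\partial\Omega\in C^{2}$ makes $\rho$ smooth in the collar, and a partition-of-unity gluing extends it smoothly to the interior. Setting $\Psi(x)=C_{0}(1-e^{-\mu\tilde\rho(x)})$, direct computation gives
\begin{align*}
&\sum_{j}a_{j}\langle D^{2}\Psi\,l_{j},l_{j}\rangle+N_{0}\sum_{j}|\langle D\Psi,l_{j}\rangle|\\
&\quad=C_{0}\mu e^{-\mu\tilde\rho}\Bigl\{-\mu\sum_{j}a_{j}\langle D\tilde\rho,l_{j}\rangle^{2}+\sum_{j}a_{j}\langle D^{2}\tilde\rho\,l_{j},l_{j}\rangle+N_{0}\sum_{j}|\langle D\tilde\rho,l_{j}\rangle|\Bigr\}.
\end{align*}
Near $\partial\Omega$ one has $|D\tilde\rho|=1$, so by property~(a) $\sum_{j}a_{j}\langle D\tilde\rho,l_{j}\rangle^{2}\geq\delta_{0}$, and choosing $\mu=\mu(\delta_{0},N_{0},\Omega)$ large makes the curly bracket $\leq -c<0$; then taking $C_{0}$ large enough that $C_{0}\mu e^{-\mu r_{0}}c\geq 2$ gives the bound $\leq -2$ on the collar. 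In the interior $\{\rho\geq r_{0}\}$ one arranges $\tilde\rho$ during the partition-of-unity step to be strictly concave, so that the Hessian term keeps the bracket uniformly negative even at the critical points where $D\tilde\rho$ vanishes. This is essentially the content of Lemma~8.8 of \cite{Kr11}.

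The finite-difference version then follows from the integral Taylor identity
$$
\Delta_{h,l_{j}}\Psi(x)=\int_{0}^{1}(1-t)\bigl[\langle D^{2}\Psi(x+thl_{j})l_{j},l_{j}\rangle+\langle D^{2}\Psi(x-thl_{j})l_{j},l_{j}\rangle\bigr]\,dt
$$
and the analogous one for $\delta_{h,l_{j}}\Psi$. Uniform continuity of $D\Psi$ and $D^{2}\Psi$ on $\bar\Omega$ together with compactness of the parameter set $[\delta_{0},\delta_{0}^{-1}]^{m}$ yield
$$
\sum_{j}a_{j}\Delta_{h,l_{j}}\Psi+N_{0}\sum_{j}|\delta_{h,l_{j}}\Psi|=\sum_{j}a_{j}\langle D^{2}\Psi\,l_{j},l_{j}\rangle+N_{0}\sum_{j}|\langle D\Psi,l_{j}\rangle|+o(1)
$$
as $h\downarrow 0$, uniformly in $x\in\Omega^{h}$ and in the admissible $a$. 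Choosing $h$ small enough that the $o(1)$-remainder has absolute value below $1$ turns the continuous bound $\leq -2$ into the required bound $\leq -1$ on $\Omega^{h}$.

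The main obstacle is the interior part of Step~1: the exponential-barrier trick is standard and clean in the boundary collar, but since $\tilde\rho$ must attain an interior maximum inside $\Omega$, $D\tilde\rho$ necessarily vanishes somewhere in the interior, and one must build $\tilde\rho$ with a strictly negative Hessian on its critical set so that the bracketed quantity remains uniformly negative there. Once that construction is in place, the Taylor expansion and the uniform passage to the finite-difference inequality are entirely routine.
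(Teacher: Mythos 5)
The paper does not prove this lemma: it is quoted as part of Lemma~8.8 of \cite{Kr11}, with the remark that the bounds $N\rho\geq\Psi\geq\rho$ are extracted from the proof there. So there is no internal proof to compare against, and your attempt must be judged on its own.

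Your overall plan --- build a $C^{2}$ barrier $\Psi$ comparable to $\rho$ satisfying the \emph{differential} inequality with margin $-2$, then pass to the finite-difference inequality with margin $-1$ by a Taylor argument on $\Omega^{h}$ --- is the right structure, and the Taylor step is correct as written: the integral identity for $\Delta_{h,l_{j}}\Psi$ only needs $\Psi\in C^{2}(\bar\Omega)$, and uniform continuity of $D^{2}\Psi$ on the compact $\bar\Omega$, together with compactness of $[\delta_{0},\delta_{0}^{-1}]^{m}$, gives the uniform $o(1)$ remainder. The boundary-collar computation for $\Psi=C_{0}(1-e^{-\mu\tilde\rho})$ is also correct, using property (a) (that $l_{1},\dots,l_{d}=e_{1},\dots,e_{d}$) to get $\sum_{j}a_{j}\langle D\tilde\rho,l_{j}\rangle^{2}\geq\delta_{0}|D\tilde\rho|^{2}$, and the verification $\rho\leq\Psi\leq N\rho$ is routine. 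Two small slips: you assert $\tilde\rho,\Psi\in C^{4}$, but $\partial\Omega$ is only $C^{2}$, so $\rho$ (hence $\tilde\rho$ on the collar) is only $C^{2}$; this is harmless since your Taylor argument only needs $C^{2}$, which is what the lemma claims.

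The genuine gap is exactly where you flagged it: the interior. You propose to arrange $\tilde\rho$ to be ``strictly concave on $\{\rho\geq r_{0}\}$'', or alternatively to have a ``strictly negative Hessian on its critical set''. Neither is achievable in general. Take the annulus $\Omega=\{1<|x|<2\}$: if $\tilde\rho\in C^{2}$ equals $\rho$ on $\{\rho<r_{0}\}$, then just inside $\{\rho=r_{0}\}$ near the inner boundary one has $D^{2}\tilde\rho\approx D^{2}\rho=\tfrac{1}{|x|}(I-\hat x\hat x^{*})\geq0$, so $\tilde\rho$ cannot be strictly concave there; and on the critical set (a sphere $\{|x|=r^{*}\}$ if $\tilde\rho$ is radial) one has $D^{2}\tilde\rho=\tilde\rho''(r^{*})\hat x\hat x^{*}$, which is rank one, hence not negative definite. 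Yet the conclusion you actually need does hold in this example: on the critical sphere, $\sum_{j}a_{j}\langle D^{2}\tilde\rho\, l_{j},l_{j}\rangle=\tilde\rho''(r^{*})\sum_{j}a_{j}\langle\hat x,l_{j}\rangle^{2}\leq\delta_{0}\tilde\rho''(r^{*})<0$, precisely because property (a) forces $\sum_{j}a_{j}\langle\xi,l_{j}\rangle^{2}\geq\delta_{0}|\xi|^{2}$ for all $\xi$. So the correct interior condition is not negative definiteness of $D^{2}\tilde\rho$; it is that the Pucci-type extremal operator $\max_{a\in[\delta_{0},\delta_{0}^{-1}]^{m}}\sum_{j}a_{j}\langle D^{2}\tilde\rho\,l_{j},l_{j}\rangle$ is uniformly negative on (a neighbourhood of) the set where $D\tilde\rho$ is small. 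Producing a $\tilde\rho$ with this property, equal to $\rho$ near $\partial\Omega$ and comparable to $\rho$, is precisely the nontrivial content of Lemma 8.8 of \cite{Kr11}, and it is not dispatched by a generic partition-of-unity gluing. As it stands, the interior step of your argument would fail, and the rest rests on it.
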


\begin{remark}
Actually, the inequality  $N\rho\geq\Psi$
and the exact dependence of $N $ on the data  are  
not claimed in the statement of Lemma 8.8 of \cite{Kr11}.
These assertions follow directly from the proof. It may be also worth
noting that which $h$ are sufficiently small 
depend on the modulus of continuity of
the second-order derivatives of $\Psi$ which are
defined by the continuity properties of the second-order
derivatives of functions defining $\partial\Omega$.

\end{remark}

}
  
 \begin{lemma}
                                         \label{lemma 9.13.1}
There   are
constants  $h_{0}>0$  and $N$
 such that for all $h\in(0,h_{0}]$
\begin{equation}
                                              \label{9.13.1}
|v_{ h}-g|\leq N(\bar{ H}+K+
\|g\|_{C^{1,2}(\bar\Omega_T)}  )\rho,
\end{equation}
\begin{equation}
                                              \label{9.13.2}
  |\partial_t v_{ h}|
\leq N(\bar{M}_{h}+\bar{ H}+K
+\|g\|_{C^{1,2}(\bar\Omega_T)})
\end{equation}
on $\bar\Omega_T$, where
$
\bar{M}_{h}:=\sup_{[0,T]\times\Omega^{h}}M_{h}
$.
\end{lemma}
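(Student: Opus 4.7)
The strategy is to establish both bounds by comparison with explicit barriers built from the function $\Psi$ supplied by Lemma~\ref{lemma 4.1.1}.

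For \eqref{9.13.1} I take $w^{\pm}(t,x) = g(t,x) \pm \mu e^{K_{0}(T-t)}\Psi(x)$, where $\Psi$ comes from Lemma~\ref{lemma 4.1.1} applied with $\delta_{0}=\hat\delta/2$ and $N_{0}=K_{0}$, and $\mu = N(\bar{H}+K+\|g\|_{C^{1,2}(\bar\Omega_{T})})$ with $N$ sufficiently large. The inequalities $w^{-} \le g = v_{h} \le w^{+}$ hold on the parabolic boundary of $[0,T]\times\Omega^{h}$ automatically since $\Psi \ge 0$. To verify the super/subsolution properties in the interior I bound $\cH$ from above by $\cP+K_{0}|u'|+\bar{H}$ via Lemma~\ref{lemma 9.14.1}, exploit the subadditivity and positive homogeneity of $\cP$, and use the barrier estimate from Lemma~\ref{lemma 4.1.1} which gives $\cP(\Delta_{h}\Psi)+K_{0}\sum_{j}|\delta_{h,l_{j}}\Psi| \le -1$. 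The crucial point is that the $K_{0}|w^{+}|$-contribution produces a term $K_{0}\mu e^{K_{0}(T-t)}\Psi$ that is cancelled exactly by $\partial_{t}(\mu e^{K_{0}(T-t)}\Psi) = -K_{0}\mu e^{K_{0}(T-t)}\Psi$, which motivates the choice $\beta = K_{0}$ in the exponent. The subsolution computation for $w^{-}$ uses $H_{K,h}[w^{-}] \ge \cP(\Delta_{h}w^{-})-K$ together with subadditivity and the same barrier estimate; here the $-K$ contributes the $K$-dependence to the final constant. The comparison principle for the monotone finite-difference scheme $\partial_{t}+H_{K,h}[\cdot]$ (applicable since $H_{K,h}$ is nondecreasing in each $\Delta_{h,l_{k}}v$) then yields $w^{-} \le v_{h} \le w^{+}$, and \eqref{9.13.1} follows from $\Psi \le N\rho$.

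For \eqref{9.13.2} the bound on $[0,T]\times(\bar\Omega\setminus\Omega^{h})$ is immediate from $v_{h} = g$ and $g \in C^{1,2}$. For $(t,x)\in[0,T]\times\Omega^{h}$ I use the equation pointwise, $\partial_{t}v_{h}(t,x) = -H_{K,h}[v_{h}](t,x)$, and split into two cases. If $\cH \ge \cP - K$ at $(t,x)$, then $H_{K,h}[v_{h}] = \cH$ and $\cP - \cH \le K$; combined with the rearrangement $\cP-\cH \ge (\hat\delta/2)\sum_{k}|\Delta_{h,l_{k}}v_{h}|-K_{0}|u'|-\bar{H}$ of Lemma~\ref{lemma 9.14.1}, this forces $\sum_{k}|\Delta_{h,l_{k}}v_{h}| \le N(\bar{H}+K+K_{0}(|v_{h}|+M_{h}))$, and then \eqref{9.8.2} yields $|\cH| \le N(\bar{H}+K+K_{0}(|v_{h}|+M_{h}))$. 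In the complementary open set $Q^{o}$ where $\cP - K > \cH$, the equation reduces to $\partial_{t}v_{h}+P_{h}[v_{h}] = K$, and Lemma~\ref{lemma 9.13.2}, applied with a suitable cutoff $\eta$, bounds $(\Delta_{h,l_{k}}v_{h})^{-}$ in terms of $\bar{M}_{h}$ and values on $Q\setminus Q^{o}$, which fall under the first case and are therefore already controlled. Using \eqref{9.13.1} to absorb $K_{0}\|v_{h}\|_{\infty}$ into the final constant then completes the estimate.

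The hardest step will be the case $Q^{o}$ of \eqref{9.13.2}: Lemma~\ref{lemma 9.13.2} controls only the negative parts $(\Delta_{h,l_{k}}v_{h})^{-}$, so by itself it bounds $\partial_{t}v_{h} = K-\cP$ only from one side via the lower bound $\cP \ge -(\hat\delta/2)\sum_{k}(\Delta_{h,l_{k}}v_{h})^{-}$. To obtain the two-sided bound I plan to bootstrap: use the identity $\cP = K-\partial_{t}v_{h}$ on $Q^{o}$ together with the already obtained one-sided bound on $\partial_{t}v_{h}$ to pin $\cP$, and hence the positive parts $(\Delta_{h,l_{k}}v_{h})^{+}$, from above as well.
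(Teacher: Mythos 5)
Your barrier argument for \eqref{9.13.1} is in the same spirit as the paper's (comparison in $[0,T]\times(\Omega^{h}\cap\Lambda^{h}_{\infty})$ against a multiple of the function $\Psi$ from Lemma \ref{lemma 4.1.1}), but there is a subtlety you gloss over when you invoke ``the comparison principle for the monotone finite-difference scheme.'' To compare $v_{h}$ with $w^{\pm}$ one ordinarily linearizes the difference $H_{K,h}[w^{\pm}]-H_{K,h}[v_{h}]$ by Hadamard, and the zeroth- and first-order coefficients that appear are derivatives of $\cH$ with respect to $u'$ along a segment through points where $|z''|=|\Delta_{h}v_{h}|$; by Lemma \ref{lemma 5.3.1} these are only bounded by $N(1+|u'|+|z''|)$, and $\Delta_{h}v_{h}$ is not uniformly controlled (Lemma \ref{lemma 9.16.1} only bounds $\rho|\Delta_{h}v_{h}|$). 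The paper sidesteps this by splitting off the $z''$-dependence of $\max(\cH,\cP-K)$ via Hadamard's formula first, arriving at \eqref{9.11.1}, and then pseudo-linearizing the remainder $f=\max(\cH(\cdot,0,\cdot),-K)$ using only the pointwise bound \eqref{9.11.2}, so that the coefficients $b_{k}$, $c$, $\theta$ entering the discrete operator $L_{h}$ are bounded by the fixed constant $N_{1}$ \emph{independently of} $\Delta_{h}v_{h}$. If you want to keep your direct two-sided barrier, you should still pass through this rewriting of the equation for $v_{h}$ before applying the maximum principle.

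The genuine gap is in your treatment of \eqref{9.13.2} on $Q^{o}$. There you propose to bound $(\Delta_{h,l_{k}}v_{h})^{-}$ via Lemma \ref{lemma 9.13.2} with a cutoff $\eta$ and then feed it into $\partial_{t}v_{h}=K-P_{h}[v_{h}]\le K+(\hat\delta/2)\sum_{k}(\Delta_{h,l_{k}}v_{h})^{-}$. But Lemma \ref{lemma 9.13.2} (with cutoff $\eta_{\mu}$ satisfying \eqref{9.22.6}) produces a bound that grows like $\mu^{-1}\bar{M}_{h}$ on $\Omega^{2\mu}$, i.e.\ essentially like $\rho^{-1}$ after optimizing $\mu\sim\rho$ --- this is exactly what Lemma \ref{lemma 9.16.1} records --- so your resulting bound on $\partial_{t}v_{h}$ blows up as $\rho\to0$, while \eqref{9.13.2} asserts a uniform bound over all of $\bar\Omega_{T}$. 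Your proposed bootstrap (pin $\cP$ via the one-sided bound on $\partial_{t}v_{h}$) does not repair this: the one-sided bound you start from is itself non-uniform. The paper's route is different and essential: on $Q^{o}$ one differentiates \eqref{9.16.7} in $t$ (take the $t$-difference quotient of the equation, use Hadamard's formula for $\cP$, let the increment go to $0$, which is legitimate since $\partial_{t}v_{h}$ is Lipschitz in $t$ along $x$-sections of $Q^{o}$) to obtain $\partial_{t}(\partial_{t}v_{h})+a_{k}\Delta_{h,l_{k}}(\partial_{t}v_{h})=0$ with $\hat\delta/2\le a_{k}\le 2\hat\delta^{-1}$; then the discrete maximum principle (Lemma 4.2 of \cite{DK}) gives $\sup_{Q^{o}}|\partial_{t}v_{h}|\le\sup_{(0,T]\times[\Omega\cap\Lambda^{h}_{\infty}]\setminus Q^{o}}|\partial_{t}v_{h}|$, and the right-hand side is controlled either by \eqref{9.22.7} together with \eqref{9.11.1}, \eqref{9.11.2}, \eqref{9.13.1}, or by the boundary data $g$. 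This treats $\partial_{t}v_{h}$ as the unknown and never requires a uniform bound on $\Delta_{h,l_{k}}v_{h}$, which is precisely what your approach lacks.
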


\begin{proof} To prove \eqref{9.13.1} observe that by 
Hadamard's formula {\color{black}(cf. \eqref{5.30.4})}
$$
0=\partial_{t}v_{h}+H_{K,h}[v_{h}]  =
\partial_{t}v_{h}
$$
$$
+\max\big(\cH(v_{h},\delta_{h}v_{h},
\Delta_{h}v_{h},t,x),P_{h}[v_{h}]-K\big)
-\max\big(\cH (v_{h},\delta_{h}v_{h},
0,t,x), -K\big)
$$
$$
+\max\big(\cH (v_{h},\delta_{h}v_{h},
0,t,x), -K\big) 
$$
\begin{equation}
                                                    \label{9.11.1}
=\partial_{t}v_{h}+\sum_{k=1}^{m}a_{k}\Delta_{h,l_{k}}v_{h}
+f(v_{h},\delta_{h}v_{h},t,x),
\end{equation}
where $a_{k}$ are some functions satisfying $ \hat{\delta} /2
\leq a_{k}\leq 2\hat{\delta}^{-1}$
and, owing to \eqref{9.8.3}, 
$f(v_{h},\delta_{h}v_{h},t,x)$ satisfies
\begin{equation}
                                                    \label{9.11.2}
|f|\leq 
N_{1} (\bar{H} +K +
|v_{h}|+{\color{black}\sum_{k=1}^{d}|\delta_{h,e_{k}}v_{h}|}) ,
\end{equation}
where $N_{1}=N( d)K'_{0} $.
This properly of $f$ implies that there exist functions
$b_{k}$, $k=1,...,d$, $c$, and $\theta$
with values in $[-N_{1},N_{1}]$
 such that
$$
f(v_{h},\delta_{h}v_{h},t,x)=c v_{h}+\sum_{k=1}^{d}
b_{k}\delta_{h,e_{k}}v_{h}+\theta(\bar{H}+K),
$$
so that $w_{h}(t,x):=v_{h}(t,x)\exp(N_{1}t)$ satisfies {\color{black}
$$
\partial_{t}w_{h}+L_{h}w_{h} +\theta
(\bar{H}+K)e^{N_{1}t}
=0
$$ 
in $[0,T]\times\Omega^{h}$,
where
$$
L_{h}w:=\sum_{k=1}^{m}a_{k}\Delta_{h,l_{k}}w 
+\sum_{k=1}^{d}
b_{k}\delta_{h,l_{k}}w +(c-N_{1}) w .
$$
}
After that
\eqref{9.13.1} for $h$ small enough {\color{black}  
follows in a standard way from the maximum principle
and the properties of $\Psi$ from Lemma \ref{lemma 4.1.1}.
To be more specific observe that
 for a constant
$N_{2}$   we have on $[0,T]\times (
 \Omega^{h}\cap\Lambda^{h}_{\infty})$ that
$$
\partial_{t}(ge^{N_{1}t})+L_{h}(ge^{N_{1}t}) 
\leq  N_{2}\|g\|_{c^{1,2}(\bar{\Omega}_{T})}=: N_{3}.
$$
Furthermore,
 $c-N_{1}\leq 0$
and for an appropriate choice of $\delta_{0},N_{0}$
and $N_{4}=N_{3}+N_{1}(\bar{H}+K)\exp(N_{1}T)$
$$
\partial_{t}(N_{4}\Psi)+L_{h}(N_{4}\Psi)+N_{3}+\theta
(\bar{H}+K)e^{N_{1}t}\leq0
$$ 
in $[0,T]\times (
 \Omega^{h}\cap\Lambda^{h}_{\infty})$. Hence, the function
$$
u_{h}=(v_{h}-g)e^{N_{1}t}-N_{4}\Psi
$$
satisfies 
$$
\partial_{t}u_{h}+L_{h}u_{h}\geq0
$$
in $[0,T]\times (
 \Omega^{h}\cap\Lambda^{h}_{\infty})$. Since the set
$\Omega^{h}\cap\Lambda^{h}_{\infty}$ has only finite  number
of points it follows from the maximum principle that
$$
u_{h}\leq 
\max\{u^{+}_{h}(T,x):x\in \Omega\cap\Lambda^{h}_{\infty} \}
$$
$$
+\max\{u^{+}_{h}(t,x):t\in[0,T],x\not\in
\Omega^{h}\cap\Lambda^{h}_{\infty},\exists \,k:
x-hl_{k}\in \Omega^{h}\cap\Lambda^{h}_{\infty}\}
$$
in $[0,T]\times (
 \Omega^{h}\cap\Lambda^{h}_{\infty})$. The conditions
imposed on $x$ inside the second maximum sign imply that
$x\in\Lambda^{h}_{\infty}$, $x\not\in
\Omega^{h}$, and $x\in \Omega\setminus
\Omega^{h}$. This along with the boundary condition
\eqref{2.25.4} leads us to the conclusion that
$$
u_{h}=(v_{h}-g)e^{N_{1}t}-N_{4}\Psi\leq0
$$
in  $[0,T]\times (
 \Omega^{h}\cap\Lambda^{h}_{\infty})$ and, owing to 
an obvious possibility of translations,
in  $[0,T]\times  
 \Omega^{h} $. By using \eqref{2.25.4} one more time we see
that, actually,
$$
v_{h}-g\leq N_{4}\Psi
$$
in $\bar{\Omega}_{T}$. This yields the needed estimate of
$v_{h}-g$ from above. Similarly one obtains it from below as well.

Passing to \eqref{9.13.2} and}
having in mind translations and the continuity
of $\partial_{t}v_{h}$ with respect to $t$
 we see that it suffices to prove
 \eqref{9.13.2} on $(0,T)\times (
\bar\Omega\cap\Lambda^{h}_{\infty})$. 
{\color{black}Recall that $G$ is  defined in \eqref{9.25.1}
and} introduce  
$$
Q^{o} =
\{(0,T)\times[\Omega^{h}
\cap\Lambda^{h}_{\infty}]\}\cap G.  
$$

Since $v_h$ satisfies \eqref{2.25.4},
estimate \eqref{9.13.2} obviously holds on 
$$
(0,T)\times(\bar\Omega\setminus 
\Omega^{h}) .
$$
On $(0,T)\times[\Omega^{h}
\cap\Lambda^{h}_{\infty}]\setminus   Q^{o}$, we have
\begin{equation}
                                                     \label{9.22.7}
(\hat\delta/2)\sum_k |\Delta_{h,l_k}v_{h}| 
\leq \bar{H}+K+K_{0}
\big(|v_{h} |+M_{h}
\big),
\end{equation}
which together with \eqref{9.13.1}, \eqref{9.11.1},  and
\eqref{9.11.2} implies that \eqref{9.13.2}  
 holds on
$(0,T)\times[\bar\Omega
\cap\Lambda^{h}_{\infty}]\setminus   Q^{o} $. Therefore, it
remains to establish  \eqref{9.13.2}
on $Q^{o}$ assuming that $Q^{o}\ne
\emptyset $.

Recall that \eqref{9.16.7} holds.
Furthermore,
every $x$-section of $Q^{o}$
is the union of open intervals  on which $\partial_{t}v_{h}$
is Lipschitz continuous   by virtue of \eqref{9.16.7}.
By subtracting the left-hand sides of
\eqref{9.16.7} evaluated at points $t$ and $t+\varepsilon$,
then transforming the difference by
using Hadamard's formula {\color{black}(as in \eqref{5.30.4})}, and finally dividing by
$\varepsilon$ and letting $\varepsilon\to0$,
 we get that there exist functions $a_{k} $ such that
$\hat{\delta}/2\leq a_{k}\leq2\hat{\delta}^{-1}$ and
on every $x$-section of $Q^{o}$ (a.e.)  we have
$$
\partial_t (\partial_t v_h) + a_k\Delta_{h,l_k} (\partial_t v_h) =0.
$$
By Lemma 4.2 of \cite{DK} this yields 
$$
\sup_{Q^{o}}|\partial_t
v_h|\leq\sup_{ (0,T] \times[\Omega\cap\Lambda^{h}_{\infty}
] \setminus  Q^{o}}
|\partial_t v_h|,
$$
which implies \eqref{9.13.2} on $Q^{o}$. The lemma 
is proved.
\end{proof}

{\color{black}
\begin{remark}
                                             \label{remark 6.9.1}
The fact that the first-order differences enter
the right-hand side of \eqref{9.13.2} reflects
a big difference between settings in this paper and in
\cite{DK} and \cite{Kr11} where it was possible to estimate
the first-order differences on the account of having
them in $P$ and then requiring from the start Lipschitz continuity
of $H$ with respect to $u'$. In our situation
the first-order differences will also enter estimates
of the second order differences and then will be excluded
from the right-hand sides by using interpolation,
which is somewhat more delicate than usual because
we could not obtain global  estimates 
of the second-order differences
  and only get estimates 
blowing up near the boundary.

\end{remark}

}

\begin{lemma}
                                          \label{lemma 9.16.1}
There  are
constants $h_{0}>0$ 
and $N$  such that for
all $h\in(0,h_{0}]$ and $r=1,...,m$ 
\begin{equation}
                                              \label{9.16.6}
 (\rho-6\lambda h) |\Delta_{h,l_{r}}  v_{ h}|\leq N(
\bar M_{h}+
\bar{H}+K
+\|g\|_{C^{1,2}(\bar\Omega_T)}) 
\end{equation} on
$[0,T]\times \Omega^{h}$ (remember that $\lambda$ is the radius of
$B$).
\end{lemma}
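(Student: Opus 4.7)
The plan is to apply Lemma~\ref{lemma 9.13.2} pointwise at each $(t_{0},x_{0})\in(0,T]\times[\Omega^{h}\cap\Lambda^{h}_{\infty}]$ with a cutoff $\eta$ localized at $x_{0}$ on the scale $R:=(\rho(x_{0})-6\lambda h)/3$, and then to use the structure of $\cP$ on $Q^{o}$ to convert the resulting bound on the negative parts $(\Delta_{h,l_{k}}v_{h})^{-}$ into a two-sided bound on $|\Delta_{h,l_{r}}v_{h}|$.

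One splits into three ranges according to $\rho(x_{0})$. If $\rho(x_{0})\le 6\lambda h$ the estimate is trivial. If $6\lambda h<\rho(x_{0})\le(6\lambda+3)h$, then the identity
$$
\Delta_{h,l_{r}}v_{h}(x_{0})=h^{-1}\big[\delta_{h,l_{r}}v_{h}(x_{0})-\delta_{h,l_{r}}v_{h}(x_{0}-hl_{r})\big]
$$
immediately gives $|\Delta_{h,l_{r}}v_{h}(x_{0})|\le 2\bar M_{h}/h$, so \eqref{9.16.6} follows on multiplying by $\rho(x_{0})-6\lambda h\le 3h$. In the main case $\rho(x_{0})>(6\lambda+3)h$, so $R\ge h$, and since $\lambda\ge 1$ one checks that $B(x_{0},2R)\subset\Omega^{(\lambda+1)h}$. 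Setting
$$
Q^{o}=\big((0,T)\times[\Omega^{(\lambda+1)h}\cap\Lambda_{\infty}^{h}]\big)\cap G,
$$
the associated set $Q$ of \eqref{6.10.1} lies in $[0,T]\times\Omega^{h}$, and we choose a smooth $\eta$ with $\eta(x_{0})=1$, $0\le\eta\le 1$, $\mathrm{supp}\,\eta\subset B(x_{0},2R)$, $\|\eta'\|_{h}\le N/R$, $\|\eta''\|_{h}\le N/R^{2}$, and set $\zeta=\eta^{2}$.

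If $(t_{0},x_{0})\notin Q^{o}$ then either $t_{0}=T$ (so $v_{h}=g$ and $|\Delta_{h,l_{r}}v_{h}|\le N\|g\|_{C^{1,2}}$) or $(t_{0},x_{0})\in G^{c}$ (so \eqref{9.22.7} combined with Lemma~\ref{lemma 9.13.1} bounds $|\Delta_{h,l_{r}}v_{h}|$ by $N(\bar H+K+\bar M_{h}+\|g\|_{C^{1,2}})$), and \eqref{9.16.6} is immediate after multiplying by $\rho(x_{0})-6\lambda h\le\text{diam}(\Omega)$. Otherwise, Lemma~\ref{lemma 9.13.2} with $\zeta(x_{0})=1$ and $\bar W_{k}\le 2\bar M_{h}^{2}$ yields, after taking square roots, the bound $(\Delta_{h,l_{k}}v_{h}(t_{0},x_{0}))^{-}\le N(\bar H+K+\bar M_{h}+\|g\|_{C^{1,2}})+NR^{-1}\bar M_{h}$ for each $k$, because $\mathrm{supp}\,\eta$ only meets $Q\setminus Q^{o}$ at points in $\{T\}\times\bR^{d}$ or $G^{c}$. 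The positive part is controlled via \eqref{9.16.7} together with the representation $\cP(z)=2\hat\delta^{-1}\sum_{k}z_{k}^{+}-(\hat\delta/2)\sum_{k}z_{k}^{-}$: the identity $\cP(\Delta_{h}v_{h}(t_{0},x_{0}))=K-\partial_{t}v_{h}(t_{0},x_{0})$, combined with Lemma~\ref{lemma 9.13.1}, gives
$$
\sum_{k}(\Delta_{h,l_{k}}v_{h})^{+}\le\tfrac{\hat\delta^{2}}{4}\sum_{k}(\Delta_{h,l_{k}}v_{h})^{-}+N(\bar H+K+\bar M_{h}+\|g\|_{C^{1,2}})
$$
at $(t_{0},x_{0})$. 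Summing over $k$, inserting the bound on the negative parts, and multiplying by $3R$ (bounded by $\text{diam}(\Omega)$) then yields \eqref{9.16.6}.

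The only delicate point is the construction of $\eta$ with $\|\eta''\|_{h}$ of order $R^{-2}$ uniformly in $h$, which is routine once $R\ge h$. The shift $6\lambda h$ in \eqref{9.16.6} is forced by the requirement that $B(x_{0},2R)\subset\Omega^{(\lambda+1)h}$, so that the finite differences used in the operators always stay inside $\Omega^{h}$, where $v_{h}$ is governed by the equation rather than merely by the boundary data.
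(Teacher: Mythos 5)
Your proposal is essentially the paper's argument: both apply Lemma~\ref{lemma 9.13.2} with a cutoff at scale comparable to $\rho(x_{0})$ to bound $(\Delta_{h,l_{k}}v_{h})^{-}$, and then exploit $\partial_{t}v_{h}+P_{h}[v_{h}]\leq K$ together with the explicit form of $\cP$ to control $(\Delta_{h,l_{k}}v_{h})^{+}$. The only substantive difference is cosmetic: the paper uses a boundary-layer cutoff $\eta_{\mu}$ equal to $1$ on $\Omega^{2\mu}$ and supported in $\Omega^{\mu}$, with $\mu$ tuned afterward to $\mu\approx\rho(x_{0})/(2\lambda)$, whereas you use a ball cutoff centered at $x_{0}$ of radius $\approx\rho(x_{0})$; both give $\|\eta'\|_{h}\lesssim\rho(x_{0})^{-1}$, $\|\eta''\|_{h}\lesssim\rho(x_{0})^{-2}$.

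However, there is a small slip you should fix. The paper's convention is $\Omega^{h}=\{x:\rho(x)>\lambda h\}$, so $\Omega^{(\lambda+1)h}=\{x:\rho(x)>\lambda(\lambda+1)h\}$. Your inclusion $B(x_{0},2R)\subset\Omega^{(\lambda+1)h}$ requires $\rho(y)>\lambda(\lambda+1)h$ for $y\in B(x_{0},2R)$, whereas what you actually control is $\rho(y)\geq\tfrac13\rho(x_{0})+4\lambda h>(6\lambda+1)h$; this dominates $\lambda(\lambda+1)h$ only when $\lambda$ is small (roughly $\lambda\lesssim 5$), and $\lambda=\lambda(\delta,d)$ can be arbitrarily large. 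The fix is trivial: define $Q^{o}$ with $\Omega^{3h}$ (as the paper does) or $\Omega^{2h}$; then you need only $\rho(y)>3\lambda h$, which does follow from $(6\lambda+1)h>3\lambda h$, and one still has $\Omega^{3h}+h\Lambda\subset\Omega^{h}$ so that $Q\subset[0,T]\times\Omega^{h}$. The same correction is needed to guarantee $x_{0}\in\Omega^{3h}$ in your main case $\rho(x_{0})>(6\lambda+3)h$. With that change, your verification that $\mathrm{supp}\,\eta$ meets $Q\setminus Q^{o}$ only at terminal-time points or at points of $G^{c}$, and the rest of the argument, is sound.
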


\begin{proof}
As in the proof of Lemma \ref{lemma 9.13.1} we will focus
on proving \eqref{9.16.6} in
$(0,T)\times[\Omega^{h}\cap
\Lambda_{\infty}^{h} ]$.  Then
  fix $r$ and define
$$
Q^{o}:=\{ 
(0,T)\times[\Omega^{3h}\cap
\Lambda_{\infty}^{h}]\}\cap G.
$$
{\color{black}For $Q$ from \eqref{6.10.1},} obviously,
$Q\subset[0,T]\times\Omega^{h}$. Next,
if $t\in (0,T)$, and $x\in \Omega^{h}\cap\Lambda^{h}_{\infty}$ is such
that $(t,x) \not\in Q^{o}$, then either $x\not\in \Omega^{3h}$, so that
$\rho(x)\leq 3\lambda h$ and \eqref{9.16.6} holds, or else $x \in
\Omega^{3h}$ but \eqref{9.22.7} is valid,
 in which case \eqref{9.16.6} holds again.

Thus we need only prove \eqref{9.16.6} on $Q^{o}$ assuming, of course,
that $Q^{o} \ne\emptyset$. We know that 
  \eqref{9.16.7}   holds and the left-hand side of
\eqref{9.16.7} is nonpositive  in $Q\setminus Q^{o}$.

To proceed further observe
 a standard fact that there are constants $\mu_{0}\in(0,
1]$
and $N\in[0,\infty)$ depending only on $\Omega$  such that
 for any $\mu\in(0,\mu_{0}]$ there exists an $\eta_{\mu}
\in C^{\infty}_{0}(\Omega)$ satisfying
$$
\eta_{\mu}=1\quad\text{on}\quad \Omega^{2\mu},\quad
\eta_{\mu}=0\quad\text{outside}\quad \Omega^{\mu},
$$
\begin{equation}
                                                    \label{9.22.6}
|\eta_{\mu}|\leq1,\quad |D\eta_{\mu}|\leq
N/\mu,\quad|D^{2}\eta_{\mu}|\leq N/\mu^{2}.
\end{equation}
By  Lemma \ref{lemma 9.13.2} on $Q^{o}\cap\Omega^{2\mu}_{T}$
$$
 [ ( \Delta_{h,l_r}v_{h})^{-}]^{2}\leq \sup_{
 Q\setminus Q^{o} }\eta_{\mu}[
( \Delta_{h,l_r}v_{h})^{-}]^{2} + N
\mu^{-2}\bar M_{h}^{2}.
$$
While estimating the last supremum
we will only concentrate on $h_{0}\leq\mu_{0}/3$
and $\mu\in[ 3h,\mu_{0}]$, when
$\eta_{\mu}=0$ outside $\Omega^{3h}$. In that case, for any $(s,y)\in
  Q\setminus Q^{o}$, either $y\notin\Omega^{3h}$ implying that
$$
\eta_{\mu} [ ( \Delta_{h,l_{r}}v_{h})^{-}]^{2}(s,y)=0,
$$
or  $y \in\Omega^{3h}\cap \Lambda_{\infty}^{h}$ but 
\eqref{9.22.7} holds at $(s,y)$,
or else ($y \in\Omega^{3h}\cap \Lambda_{\infty}^{h}$ and 
$(s,y)\notin Q^{o}$ and)
there is a sequence $s_{n}\uparrow s$ such that
$(s_{n},y)\in Q^{o}$. 

The third possibility splits into two cases:
1) $s=T$, 2) $s<T$. In case 1 we have
$$
|\Delta_{h,l_{r}}
v_{ h}(s,y)|=|\Delta_{h,l_{r}}
g(s,y)|\leq N\|g\|_{C^{1,2}(\bar\Omega_T)}.
$$
In case 2, estimate \eqref{9.22.7}
holds by  the definition of $Q^{o}$.

It follows that as long as $h\in(0,h_{0}]$, $(t,x)\in Q^{o}
\cap\Omega^{2\mu}_{T}$, and  $\mu\in
[3h,\mu_{0}]$ we have   
\begin{equation}
                                                   \label{9.22.07}
 ( \Delta_{h,l_{r}}v_{h})^{-}(t,x)\leq
 N
\mu^{-1}(\bar{H}+K
+\|g\|_{C^{1,2}(\bar\Omega_T)}+\bar M_{h}).
\end{equation}

If $(t,x)\in Q^{o}$ and $x$ is such that $\rho(x)\geq 6\lambda h$, take
$\mu=\mu_{0}\wedge(\rho(x)/(2\lambda))$, which is bigger than $3h$
for $h\in(0,h_{0}]$ since $h_{0}\leq\mu_{0}/3$.
 In that case also $ \rho(x)\geq 2\lambda \mu$, so that 
$x\in \Omega^{2\mu}$ and
we conclude from \eqref{9.22.07} that
$$ 
( \Delta_{h,l_{r}}v_{h})^{-}(t,x)
\leq N\mu^{-1}(\bar{H}+K
+\|g\|_{C^{1,2}(\bar\Omega_T)}+\bar M_{h}).
$$
Furthermore, still in case
$\mu=\mu_{0}\wedge(\rho(x)/(2\lambda))$, as is easy to see,
there is a constant $N$, depending only on $\lambda,\mu_{0}$,
and the diameter of $\Omega$, such that $\mu^{-1}\leq
N\rho^{-1}(x)$. Therefore,
$$ \rho(x)( \Delta_{h,l_{r}}v_{h})^{-}(t,x)
\leq N(\bar{H}+K
+\|g\|_{C^{1,2}(\bar\Omega_T)}+\bar M_{h}),
$$
$$
(\rho(x)-6\lambda h)( \Delta_{h,l_{r}}v_{h})^{-}(t,x) \leq N
(\bar{H}+K
+\|g\|_{C^{1,2}(\bar\Omega_T)}+\bar M_{h})
$$
for $(t,x)\in Q^{o}$ such that
$\rho(x)\geq 6\lambda h$. However, the second relation here
is obvious for $\rho(x)\leq 6\lambda h$.

As a result of all the above arguments we see that 
\begin{equation}
                                                   \label{9.22.8}
(\rho-6\lambda h)( \Delta_{h,l_{r}}v_{h})^{-}\leq N(
\bar{H}+K
+\|g\|_{C^{1,2}(\bar\Omega_T)}+\bar M_{h}) 
\end{equation}
 holds
in $(0,T)\times[\Omega^{h}\cap  \Lambda_{\infty}^{h}]$
  for any $r$ whenever $h\in(0,h_{0}]$.  

Finally, since $\partial_t v_h+P_{h}[v_{h}]\leq K$ in $(0,T)\times \Omega^{h}$, we have that
$$
2\hat{\delta}^{-1}\sum_{r}(\Delta_{r}v_{h})^{ +} 
\leq-\partial_t v_h+( \hat\delta/2)
\sum_{r}(\Delta_{r}v_{h})^{ -}+K,
$$
which after being multiplied by $\rho-6h$ along
with \eqref{9.22.8}  and \eqref{9.13.2}
 leads to \eqref{9.16.6}
on $(0,T)\times[\Omega^{h}\cap  \Lambda_{\infty}^{h}]$. Thus, as
  is explained at the beginning of the proof,
  the lemma is proved.
\end{proof}

Our final estimates hinge on the first-order difference
estimates.
\begin{lemma}
                                          \label{lemma 9.22.3}
 
There is a constant $N$  such that for all sufficiently
small $h>0$ the estimates
\begin{equation}
                                              \label{9.18.9}
|v_{h}|,|\partial_{t}v_{h}|,|\delta_{h,l_{k}}v_{h}|,
( \rho -6\lambda h)|\Delta_{h,l_{k}}v_{h}|\leq N
(\bar{H}+K
+\|g\|_{C^{1,2}(\bar\Omega_T)})
\end{equation}
hold  in $[0,T]\times \Omega^{h}$ for all $k$.
\end{lemma}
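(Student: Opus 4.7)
The plan is to reduce everything to a single missing estimate, namely a uniform bound
$$
\bar M_h \le N(\bar H + K + \|g\|_{C^{1,2}(\bar\Omega_T)}),
$$
with $\bar M_h = \sup_{[0,T]\times\Omega^h}\sum_k|\delta_{h,l_k}v_h|$. The bounds on $|v_h|$, $|\partial_t v_h|$, and $(\rho-6\lambda h)|\Delta_{h,l_k}v_h|$ in \eqref{9.18.9} are already provided by Lemmas \ref{lemma 9.13.1} and \ref{lemma 9.16.1} but with $\bar M_h$ present on the right-hand side; once the display above is established, substituting it into \eqref{9.13.1}, \eqref{9.13.2}, and \eqref{9.16.6} closes all four estimates simultaneously. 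As Remark \ref{remark 6.9.1} flags, obtaining this bound on first-order differences requires an interpolation between $v_h$ itself and the (only weighted) second-order differences, which is somewhat subtle because the weighted second-difference estimate blows up near $\partial\Omega$.

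To carry out the interpolation, I would fix $(t_0,x_0)\in(0,T)\times\Omega^h$ and a direction $l=l_k$, apply it to $w:=v_h-g$, and use the discrete telescoping identity
$$
\delta_{h,l}w(x_0) = \frac{w(x_0+jhl)-w(x_0)}{jh}-\frac{h}{j}\sum_{i=0}^{j-1}\sum_{r=1}^{i}\Delta_{h,l}w(x_0+rhl),
$$
obtained by iterating $\delta_{h,l}u(y+hl)-\delta_{h,l}u(y)=h\Delta_{h,l}u(y+hl)$ and averaging over $i$; this gives
$$
|\delta_{h,l}w(x_0)| \le \frac{2\sup|w|}{jh} + jh\sup|\Delta_{h,l}w|,
$$
with suprema taken over the discrete segment $\{x_0+ihl : 0\le i\le j\}$. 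The advantage of working with $w$ is that \eqref{9.13.1} gives $|w(y)|\le N_0\rho(y)d_0$ with $d_0:=\bar H+K+\|g\|_{C^{1,2}(\bar\Omega_T)}$, so the first term shrinks near the boundary; the second is controlled by Lemma \ref{lemma 9.16.1} applied to $v_h$ plus the trivial bound $|\Delta_{h,l}g|\le N\|g\|_{C^{1,2}}$.

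The core of the argument is then to split on $\rho(x_0)$. When $\rho(x_0)\le C_1 h$ I would take $j=1$ and invoke $|w|\le N_0\rho d_0$ at $x_0$ and $x_0+hl$ directly (using $w=0$ off $\Omega^h$), obtaining $|\delta_{h,l}v_h(x_0)|\le Nd_0$. When $\rho(x_0)>C_1h$ I would choose $jh=\varepsilon\rho(x_0)$ with $\varepsilon>0$ small; for $C_1$ large enough relative to $\varepsilon$ and $\max_k|l_k|$, every point $y$ on the path satisfies $\rho(y)\ge(3/4)\rho(x_0)$ and hence $\rho(y)-6\lambda h\ge(1/2)\rho(x_0)$, so Lemma \ref{lemma 9.16.1} gives $|\Delta_{h,l}v_h(y)|\le 2N\rho(x_0)^{-1}(\bar M_h+d_0)$, and the interpolation becomes
$$
|\delta_{h,l}v_h(x_0)| \le N_\varepsilon d_0 + 2N\varepsilon(\bar M_h+d_0).
$$
Summing over $k=1,\ldots,m$, taking the supremum in $(t_0,x_0)$, and picking $\varepsilon$ so that $2mN\varepsilon\le 1/2$ yields $\bar M_h\le Nd_0$ after absorption. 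I expect the main obstacle to be the careful handling of the interface between the two regimes and the verification that the telescoped path stays inside $\Omega^h\cap\Lambda^h_\infty$; this is manageable because $l_k\in\bZ^d$ and $\partial\Omega$ is $C^2$, but the geometric constants $C_1$ and $\varepsilon$ must be tuned simultaneously to make the absorption work while keeping the path admissible.
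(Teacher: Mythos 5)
Your proposal is correct and follows essentially the same strategy as the paper: reduce everything to a self-improving bound on $\bar M_h$, interpolate the first-order differences between the boundary-decay estimate \eqref{9.13.1} for $v_h-g$ and the weighted second-difference estimate \eqref{9.16.6}, split by distance to the boundary, and absorb the $\bar M_h$ term by choosing $\varepsilon$ small. The only cosmetic difference is that you derive the discrete interpolation inequality by a one-sided telescoping identity, whereas the paper obtains the symmetric inequality \eqref{9.24.7} via a monotonicity argument and then handles the near-boundary regime in one stroke by working on $\Omega^h\setminus\Omega^{n(\varepsilon)h}$ with $n(\varepsilon)=10/\varepsilon$ and taking $r=[(\varepsilon\rho(x)-6\lambda h)/(2\lambda h)]$ on $\Omega^{n(\varepsilon)h}$; both devices are correct and yield the same conclusion.
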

 
\begin{proof} The first estimate in \eqref{9.18.9} is obtained
in Lemma \ref{lemma 9.13.1}. Owing to Lemmas \ref{lemma 9.13.1}
and \ref{lemma 9.16.1},   the remaining estimates
would follow if we can prove that
\begin{equation}
                                                   \label{9.22.08}
 |\delta_{h,l_{k}}v_{h}|\leq N
(\bar{H}+K
+\|g\|_{C^{1,2}(\bar\Omega_T)})
\end{equation}
  in $[0,T]\times \Omega^{h}$ for all $k$.

We are going to use interpolation inequalities.
Note that if we have a function $u(i)$ on a set $-r+1,...,
0, 1,...,r$,
where $r\geq 2$ is an integer, which satisfies
\begin{equation}
                                                  \label{3.17.2}
u(i+1)-2u(i)+u(i-1)\geq -N_{1}
\end{equation}
for $i=-r+2,...,r-1$, where $N_{1}$ is a constant,
 then 
$$
u(i+1)-u(i)\geq u(i)-u(i-1)-N_{1}.
$$
It follows that $w(i):=u(i+1)-u(i)+N_{1}i$ is an increasing
function of $i=-r+1,...,r-1$.
In particular,
$$
u(1)-u(0)=w(0)\leq\frac{1}{r-1}\sum_{i=1}^{r-1}w(i)
$$
$$
=\frac{1}{r-1}\sum_{i=1}^{r-1}(u(i+1)-u(i)+N_{1}i)
=\frac{1}{r-1}(u(r)-u(1))+\frac{1}{2}N_{1}r.
$$ 
On the other hand,
$$
u(1)-u(0)\geq\frac{1}{r-1}\sum_{i=-r+1}^{-1}(u(i+1)-u(i)+N_{1}i)
$$
$$
=\frac{1}{r-1}(u(0)-u(-r+1))-\frac{1}{2}N_{1}r.
$$
 
It follows that 
$$
|u(1)-u(0)|\leq \frac{1}{2}N_{1}r+\frac{2}{r-1}\max\{|u(i)|:i=-r+1,...,r\},
$$
and for any function $w$ (use that $(r-1)^{-1}\leq2r^{-1}$
for $r\geq2$)
\begin{equation}
                                             \label{9.24.7}
|w(1)-w(0)|\leq  \frac{r}{2} \max_{|i|\leq r}
|w(i+1)-2w(i)+w(i-1)|+\frac{4}{r}\max_{|i|\leq r} |w(i)|.
\end{equation}
 
Now fix an $\varepsilon\in(0,1]$  
and set
$$
n(\varepsilon)=10/\varepsilon.
$$
Observe that if $x\in\Omega^{n(\varepsilon)h}$ 
and we take $r=[(\varepsilon\rho(x)-6\lambda h) (2\lambda h)^{-1}]$
($[a]$ is the integer part of $a$), then
$r\geq2$ and 
\begin{equation}
                                             \label{11.7.3}
\varepsilon[ \rho(x+ihl_{k})-6\lambda h] \geq r\lambda h
\end{equation}
  for $|i|\leq r$ since
 $
\rho(x+ihl_{k})\geq\rho(x)-\lambda rh
 $
and
$$
\varepsilon\rho(x)-(1+\varepsilon)r\lambda h
\geq\varepsilon\rho(x)-2r\lambda h
\geq 6\lambda h.
$$
In particular, $x+ihl_{k}\in\Omega^{h}$ for $|i|\leq r$
and it makes sense applying \eqref{9.24.7}
to $w(i)=v_{h}(t,x+ihl_{k})-g(t,x+ihl_{k})$ with $t\in(0,T)$,
which yields
$$
|\delta_{h,l_{k}}(v_{h}-g)(t,x)|
\leq \frac{1}{2}rh\max_{|i|\leq r}|
\Delta_{h,l_{k}}(v_{h}-g)(t,x+ihl_{k})|
$$
\begin{equation}
                                             \label{9.13.6}
+\frac{4}{rh}
\max_{|i|\leq r}|
 (v_{h}-g)(t,x+ihl_{k})|.
\end{equation}

Also notice that for $x\in \Omega^{n(\varepsilon)h}$
$$ 
2r\lambda h\geq \varepsilon\rho(x)-8\lambda h,\quad
10\lambda h<\varepsilon\rho(x),\quad 
10r\lambda h\geq \varepsilon\rho(x),
$$

\begin{equation}
                                          \label{11.8.1}
\rho(x+ihl_{k})\leq \rho(x)+r\lambda h\leq
rh(10\lambda\varepsilon^{-1}+\lambda)\leq rh11\lambda
\varepsilon^{-1}.
\end{equation}
Estimates \eqref{11.7.3} and \eqref{11.8.1}
allow us to derive from \eqref{9.13.6} that
$$
|\delta_{h,l_{k}}(v_{h}-g)(t,x)|
\leq N\varepsilon\max_{|i|\leq r}[ \rho(x+ihl_{k})-6\lambda h]|
\Delta_{h,l_{k}}(v_{h}-g)(t,x+ihl_{k})|
$$
$$
+N\varepsilon^{-1}
\max_{|i|\leq r}\rho(x+ihl_{k})^{-1}|
 (v_{h}-g)(t,x+ihl_{k})|,
$$
which along with
Lemmas \ref{lemma 9.13.1} and \ref{lemma 9.16.1}
shows that for all sufficiently small $h$,
$\varepsilon\in(0,1]$, and $x\in\Omega^{n(\varepsilon)h}$
$$
|\delta_{h,l_{k}}(v_{h}-g)(t,x)|\leq
N\varepsilon(\bar{M}_{h}+
\bar{H}+K
+\|g\|_{C^{1,2}(\bar\Omega_T)}) 
$$
$$
+ N\varepsilon^{-1}(\bar{H}+K+ 
\|g\|_{C^{1,2}(\bar\Omega_T)}  ).
$$  

 Hence, for all sufficiently
small $h$ we have 
$$
\bar{M}_{h}=\sup_{[0,T]\times \Omega^{h}}\sum_{k=1}^{m}
|\delta_{h,l_{k}}v_{h}|\leq
N_{1}\varepsilon(
\bar{M}_{h}+
\bar{H}+K
+\|g\|_{C^{1,2}(\bar\Omega_T)}) 
$$

 $$
+ N\varepsilon^{-1}(\bar{H}+K+
\|g\|_{C^{1,2}(\bar\Omega_T)}  )
+\sup_{[0,T]\times(\Omega^{h}\setminus
 \Omega^{n(\varepsilon)h})}\sum_{k=1}^{m}
|\delta_{h,l_{k}}(v_{h}-g)|,
$$
where the last term is dominated by 
$$
Nn(\varepsilon)(\bar{H}+K
+\|g\|_{C^{1,2}(\bar\Omega_T)})
$$
in light of  \eqref{9.13.1}.
To finish proving \eqref{9.22.08} it now remains only
  pick and fix $\varepsilon\in(0,1]$ so that $N_{1}\varepsilon\leq1/2$.
The lemma is proved.
\end{proof}

\mysection{Proof of Theorem \protect\ref{theorem 10.5.1}}
                                             \label{section 9.21.2}

{\color{black} In contrast with the proofs 
in \cite{DK} and
\cite{Kr12.2} of the statements similar to 
Theorem \ref{theorem 10.5.1}, here the proof consists
of two parts. The first part goes indeed very much like
in \cite{DK} and \cite{Kr12.2}
but only in case that $H$ is independent of $u'_{0}$.
 This happens because while getting uniform in $h$
estimates of the modulus of continuity of $v_{h}$,
we apply a finite-difference operator $T_{h,l}-1$
  to the equation
 and obtain an equation
for   $(T_{h,l}-1)v_{h}$ with coefficients
controlled by $v_{h}$, $\delta_{h}v_{h}$, and
$\Delta_{h}v_{h}$. This is  harmless if the coefficient
of $(T_{h,l}-1)v_{h}$ turns out to be bounded.
Observe that this coefficient
is basically the derivative of $\cH$ with respect to
$u'_{0}$ and it is indeed under control
in the situation of \cite{DK} and
\cite{Kr12.2} or when $\Omega=\bR^{d}$. Note that
in the estimate of this coefficient the second-order 
differences
of $v_{h}$ enter (see Lemma \ref{lemma 5.3.1})
and 
 in  the case
of bounded domain
the estimate blows up
near the boundary. That is why we first prove
Theorem \ref{theorem 10.5.1} when $H$ is independent
of $u'_{0}$, so that we can set $u'_{0}=0$ in $\cH$
and then we forget about $\cH$ and prove
Theorem \ref{theorem 10.5.1} in full generality
by using the Banach fixed point theorem.

Here is an estimate of the modulus
of continuity of $v_{h}$ useful in the particular case
that $H$ is independent of $u'_{0}$. In the following
lemma   \eqref{9.8.4}
 plays  a crucial role  and in \eqref{9.8.4} only the
Lipschitz continuity in $x$ is needed. By the way, notice that
as is easy to see
all the results in Section \ref{section 5.3.1} are valid for the solution
$v^{0}_{h}$ of the equation
$$
\partial_{t}v+\cH^{0}_{K}(\delta_{h}v ,\Delta_{h}v ,t,x) =0
$$
in $[0,T]\times\Omega^{h}$ with the same boundary condition
\eqref{2.25.4}, where
$$
\cH^{0}_{K}(\delta_{h}v ,\Delta_{h}v ,t,x)=
\max(\cH(0,\delta_{h}v ,\Delta_{h}v ,t,x),P_{h}[v ]-K)
$$

}
\begin{lemma}
                                        \label{lem11.24}

There are constants $h_{0}>0$ and $M$ {\color{black}
and there is a function $\omega_{1}(h)$, $h>0$,
such that $\omega_{1}(0+)=0$ and}
 for
all $h\in(0,h_{0}]$, $t \in [0,T]$, and $x,y \in\Omega$, we have
\begin{equation}
                                      \label{6.4.2}
|v^{0}_{h}(t,x)-v^{0}_{h}(t,y)|\leq
M(|x-y|+{\color{black}\omega_{1}(h)}).
\end{equation}
\end{lemma}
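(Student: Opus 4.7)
The plan is to fix a continuous shift $\xi \in \bR^d$, set $w(t,x) := v^{0}_{h}(t, x+\xi) - v^{0}_{h}(t, x)$ on the set where both $x$ and $x+\xi$ lie in $\bar\Omega$, and derive a linear finite-difference equation for $w$ to which a discrete maximum principle applies. The decisive structural feature of the $\cH^{0}_{K}$ setting, underlined in the paragraph preceding the lemma, is that $\cH^{0}_{K}$ does not depend on $u'_{0}$; consequently the linearized equation for $w$ will contain no zeroth-order coefficient in $w$ itself, which is exactly what the bound on $\cH_{u'}$ in Lemma \ref{lemma 5.3.1} is prepared to exploit.

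Subtracting the equations for $v^{0}_{h}$ at $(t, x+\xi)$ and $(t, x)$, using the identity $(\delta_{h} v^{0}_{h})(t, x+\xi) = (\delta_{h} v^{0}_{h})(t, x) + \delta_{h} w(t, x)$ together with its analogue for $\Delta_{h}$, and then applying Hadamard's formula to $\cH^{0}_{K}$ in its $(u', z'')$-arguments (to linearize) and \eqref{9.8.4} for the $x$-dependence, yields an equation of the form
\[
\partial_{t} w + \sum_{k=1}^{m} a_{k}\, \Delta_{h,l_{k}} w + \sum_{k=1}^{d} b_{k}\, \delta_{h,e_{k}} w = F,
\]
with $a_{k} \in [\hat\delta, \hat\delta^{-1}]$ by \eqref{9.8.2}, $|b_{k}| \leq N(1 + |\delta_{h} v^{0}_{h}| + |\Delta_{h} v^{0}_{h}|)$ by Lemma \ref{lemma 5.3.1} applied with $u'_{0}=0$, and $|F| \leq N|\xi|(1 + |\delta_{h} v^{0}_{h}| + |\Delta_{h} v^{0}_{h}|)$. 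Lemma \ref{lemma 9.22.3} bounds $|\delta_{h} v^{0}_{h}|$ uniformly and $|\Delta_{h} v^{0}_{h}|$ by $N/(\rho - 6\lambda h)$, so both $|b_{k}|$ and $|F|/|\xi|$ are under control on $\{\rho \geq \rho_{0}\}$ with a bound of order $N/\rho_{0}$, but in general blow up near $\partial\Omega$.

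I would then apply the discrete backward-parabolic maximum principle (in the spirit of Lemma 4.2 of \cite{DK}, with the first-order terms absorbed by an exponential-in-time transformation $w \mapsto e^{-\gamma t}w$, which is legitimate because there is no zeroth-order term in $w$) on the lattice orbit through $x$ restricted to $\{\rho \geq \rho_{0}\}$ for a parameter $\rho_{0} > 6\lambda h$. The boundary contribution on $\{t=T\}$ and where $v^{0}_{h}=g$ is $\leq C\|g\|_{C^{1,2}(\bar\Omega_{T})}|\xi|$; on the inner boundary $\{\rho \approx \rho_{0}\}$ and in the complementary near-boundary region, Lemma \ref{lemma 9.13.1} ($|v^{0}_{h}-g| \leq N\rho$) gives $|w| \leq 2N\rho_{0} + C|\xi|$. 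Collecting,
\[
|w| \leq N\bigl(|\xi| + \rho_{0} + |\xi|/\rho_{0}\bigr),
\]
valid for every $\rho_{0} \in [6\lambda h, \operatorname{diam}(\Omega)]$. Choosing $\rho_{0} = \max(\sqrt{|\xi|}, 6\lambda h)$ yields \eqref{6.4.2} with, for instance, $\omega_{1}(h) = N\sqrt{h}$, after a routine improvement using the lattice Lipschitz bound from Lemma \ref{lemma 9.22.3} in the regime $|\xi| \lesssim h$ (where $\xi$ is nearly a lattice vector, so a direct first-difference bound applies).

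The main obstacle is precisely the blow-up of $|\Delta_{h} v^{0}_{h}|$ near $\partial\Omega$ coming from Lemmas \ref{lemma 9.16.1} and \ref{lemma 9.22.3}, which contaminates both the first-order coefficient $b_{k}$ and the source $F$ in the linearized equation and forbids a direct global application of the maximum principle; the remedy is the two-scale argument above, with $\rho_{0}$ balancing the interior discrete estimate against the boundary estimate of Lemma \ref{lemma 9.13.1}, the forced constraint $\rho_{0} \geq 6\lambda h$ producing the discretization error $\omega_{1}(h)$. This is also, as flagged in the passage preceding the lemma, the exact reason why the proof of Theorem \ref{theorem 10.5.1} must be split into a $u'_{0}$-independent case (handled here) and the general case treated subsequently by the Banach fixed point theorem.
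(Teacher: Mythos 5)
Your overall strategy --- linearize the difference $w = v^{0}_{h}(\cdot,\cdot+\xi)-v^{0}_{h}$ via Hadamard's formula, Lemma~\ref{lemma 5.3.1}, and \eqref{9.8.4}; observe there is no zeroth-order term in $w$; and balance the interior maximum principle against the near-boundary bound $|v^{0}_{h}-g|\le N\rho$ from Lemma~\ref{lemma 9.13.1} --- is the same as the paper's. However, the final step contains a genuine gap. From $|w|\le N(|\xi|+\rho_{0}+|\xi|/\rho_{0})$ for all $\rho_{0}\in[6\lambda h,\operatorname{diam}\Omega]$, the best you can extract by optimizing in $\rho_{0}$ for a given shift $\xi$ is $|w|\lesssim\sqrt{|\xi|}$ (taking $\rho_{0}\sim\sqrt{|\xi|}$). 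This is a H\"older-$\tfrac12$ modulus, not the Lipschitz-plus-$h$-error form required by \eqref{6.4.2}: already for $|\xi|$ of size $\sqrt{h}$ your bound is $\sim h^{1/4}$, which cannot be dominated by $M(|\xi|+\omega_{1}(h))\sim M\sqrt{h}$ for any fixed $M$ and any $\omega_{1}(h)\to0$. More structurally, there is no choice of $\rho_{0}$ (whether fixed, $h$-dependent, or $\xi$-dependent) that makes $N(|\xi|+\rho_{0}+|\xi|/\rho_{0})\le M(|\xi|+\omega_{1}(h))$ hold with $M$ independent of $h$ and $\omega_{1}(0+)=0$: a small $\rho_{0}$ destroys $M$ via the $|\xi|/\rho_{0}$ term, while a bounded-below $\rho_{0}$ destroys $\omega_{1}$.

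The resolution, which is what the paper does, is to split the proof by the \emph{size of the shift}, and you have the two regimes reversed. The continuous-shift / two-scale argument should be used only for $|\xi|\le h$; there $|\xi|/\rho_{0}\le h/\rho_{0}$ and one may take $\rho_{0}=\rho_{0}(h)$ a function of $h$ alone (the paper takes $\rho_{0}\sim\lambda\varepsilon$, $\varepsilon\ge10h$, and builds the comparison function $u=e^{N_{1}(T-t)/\varepsilon}h$ to control the inhomogeneous term $f_{h}h\lesssim h/\rho$), yielding $|w|\le\omega_{2}(h)$ with $\omega_{2}(h)$ depending on $h$ only. Then, for $|x-y|>h$, one connects $x$ to $y$ by a chain of $O(|x-y|/h)$ lattice steps and uses the uniform first-difference bound $|\delta_{h,l_{k}}v^{0}_{h}|\le N$ from Lemma~\ref{lemma 9.22.3} on each step, producing the genuine Lipschitz term $N|x-y|$; the shift estimate handles only the sub-lattice gaps at the endpoints. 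Your sentence ``using the lattice Lipschitz bound $\ldots$ in the regime $|\xi|\lesssim h$ (where $\xi$ is nearly a lattice vector)'' has it backwards: for $|\xi|\lesssim h$ the shift is \emph{smaller} than the lattice spacing, not close to a lattice vector, and the first-difference bound applies to shifts $\gtrsim h$. Finally, a smaller point: the claim that the first-order terms $b_{hk}\delta_{h,e_{k}}w$ are ``absorbed by an exponential-in-time transformation'' is incorrect --- such a transformation introduces a \emph{zeroth}-order term, not a first-order one. No absorption of the first-order terms is needed; one only has to verify the discrete CFL-type condition $hb_{hk}^{-}\le a_{hk}$ on $\{\rho\ge\rho_{0}\}$, which holds for $\rho_{0}$ at least a suitable multiple of $h$.
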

{\color{black}

\begin{proof} We closely follow the main idea of 
 the proof of Corollary 2.7 of \cite{Kr12.1} which  is
about elliptic equations.
Fix an $l\in\bR^{d}$ such that $|l|\leq1$ and define
$$
w_{h}(t,x)= v^{0}_{h}(t,x+hl)-v^{0}_{h}(t,x) .
$$
This function is well defined in $[0,T]\times\Omega^{h}$
(since $\lambda>1$).
Then observe that in $[0,T]\times\Omega^{h}$
$$
0=\partial_{t}w_{h}(t,x)+I_{h}(t,x)+J_{h}(t,x)+K_{h}(t,x),
$$
where
$$
 I_{h}(t,x)= \cH^{0}_{K}( \delta_{h}v^{0}_{h}(t,x+hl),
\Delta_{h} v^{0}_{h}(t,x+hl),t,x+hl)
$$
$$
-
\cH^{0}_{K}( \delta_{h}v^{0}_{h}(t,x+hl ),
\Delta_{h} v^{0}_{h}(t,x ),t,x+hl) ,
$$

$$
 J_{h}(t,x)= \cH^{0}_{K}( \delta_{h}v^{0}_{h}(t,x+hl ),
\Delta_{h} v^{0}_{h}(t,x ),t,x+hl)
$$
$$
-
\cH^{0}_{K}( \delta_{h}^{0}v_{h}(t,x  ),
\Delta_{h} v^{0}_{h}(t,x ),t,x+hl) ,
$$

$$
 K_{h}(t,x)= 
\cH^{0}_{K}( \delta_{h}v^{0}_{h}(t,x ),
\Delta_{h} v^{0}_{h}(t,x ),t,x+hl)
$$
$$
-\cH^{0}_{K}( \delta_{h}v^{0}_{h}(t,x ),
\Delta_{h} v^{0}_{h}(t,x ),t,x ) .
$$
As a few times in the past Hadamard's formula
allows us to conclude that there exist  functions
$a_{hk}(t,x)$, $k=1,...,m$, such that 
$\hat{\delta}/2\leq a_{hk}\leq2\hat{\delta}^{-1}$ and
in $[0,T]\times\Omega^{h}$
$$
I_{h}=a_{hk}\Delta_{h,l_{k}}w_{h}.
$$
According to Lemma \ref{lemma 5.3.1}
in $[0,T]\times\Omega^{h}$ we have 
$$
|J_{h}|
\leq N \sum_{k=1}^{d}|\delta_{h,e_{k}}w_{h}| 
\big[1 +\sum_{k=1}^{m}|
\Delta_{h,l_{k}} v^{0}_{h}(t,x )|
$$
$$
+\sum_{k=1}^{d}(|\delta_{h,e_{k}}v^{0}_{h}(t,x)|+
|\delta_{h,e_{k}}v^{0}_{h}(t,x+hl )|)\big],
$$ 
where and below by $N$ with occasional indices
we denote constants independent of $h$.
As far as $K_{h}$ is concerned, by \eqref{9.8.4}
$$
|K_{h}|\leq Nh \big[1 +
\sum_{k=1}^{d} |\delta_{h,e_{k}}v^{0}_{h}(t,x)|
+\sum_{k=1}^{m}|
\Delta_{h,l_{k}} v^{0}_{h}(t,x )|\big].
$$

The above estimates of $J_{h}$ and $K_{h}$
 along with Lemma \ref{lemma 9.22.3} show  that
$$
J_{h}= b _{hk}\delta_{h,e_{k}}w_{h},
\quad K_{h}=f_{h}h 
$$
with appropriate functions $ b_{hk},f_{h}$ which satisfy
the inequality
$$
 \sum_{k=1}^{d}|b_{hk}|+|f_{h}|\leq N_{1}/\rho
$$
in $[0,T]\times\Omega^{10 h}$
for sufficiently small $h>0$. Thus,
\begin{equation}
                                                         \label{6.3.7}
\partial_{t}w_{h}+a_{hk}\Delta_{h,l_{k}}w_{h} +b_{hk}\delta_{h,e_{k}}w_{h}
+f_{h}h =0
\end{equation}
in $[0,T]\times\Omega^{10 h}$ for sufficiently small $h>0$. We take
$\varepsilon\geq10h$ and notice that, due to \eqref{9.13.1}
and the fact that $w_{h}(T,x)
=g(T,x+hl)-g(T,x)$,  on 
\begin{equation}
                                                         \label{6.3.8}
\big(\{T\}\times\Omega^{h}\big)\cup
\big([0,T]\times[\Omega^{h}\setminus\Omega^{\varepsilon}]\big)
\end{equation}
we have  
 $
|w_{h}|\leq N_{2} \varepsilon ,
 $
  where the constant $N_{2}$  is independent of $\varepsilon$
(and  $h$).
It follows that the function
$$
\bar{w}_{h}(t,x)=w_{h}(t,x)-N_{2} \varepsilon 
$$
is negative on \eqref{6.3.8} and on $[0,T]\times\Omega^{\varepsilon}$
satisfies \eqref{6.3.7}. 
On the other hand, the function $u= 
e^{N_{1}(T-t)/\varepsilon}h $ is nonnegative on \eqref{6.3.8}
and as is easy to check  on $[0,T]\times\Omega^{\varepsilon}$
satisfies
$$
\partial_{t}u+a_{hk}\Delta_{h,l_{k}}u+b_{hk}\delta_{h,e_{k}}u
+f_{h}h \leq0.
$$
By the maximum principle in $[0,T]\times\Omega^{h}$, if
 $\varepsilon\geq 10h$, then
$$
w_{h}\leq N_{2}\varepsilon +
e^{N_{1}(T-t)/\varepsilon}h . 
$$
In other words if $x,y\in \Omega$, $|x-y|\leq h$, and one of $x$ or $y$
is in $\Omega^{h}$ and $t\in[0,T]$, then
\begin{equation}
                                      \label{6.4.1}
|v^{0}_{h}(t,x)-v^{0}_{h}(t,y)|\leq 
\min_{\varepsilon\geq 10h}[
N_{2}\varepsilon +
e^{N_{1} T /\varepsilon}h]=:\omega_{2}(h).
\end{equation}

Obviously, $\omega_{2}(0+)=0$ and if both $x,y\in\Omega\setminus\Omega^{h}$
and $|x-y|\leq h$, then \eqref{6.4.2} holds with $\omega_{1}(h)=
\omega_{2}(h)+N_{1}h$, where $N_{1}$ responsible for the boundary condition
is independent of $h$, $t$, $x$, and $y$.

In case $|x-y|\geq h$ and $h$ is sufficiently small,
 owing to the smoothness of $\Omega$, one
can find points $x^{1},...,x^{n}\in h\bZ^{d}\cap\Omega$,
such that $|x-x^{1}|,|x^{n}-y|\leq kh$, $x^{i+1}-x^{i}
\in\{\pm e_{1},...,\pm e_{d}\}$
for $i=1,...,n-1$,  $n\leq N|x-y|$, and
$k\in\{1,2,...\}$,
where $N$ and $k$ depend  only on $\Omega$. Then one derives 
\eqref{6.4.2} from the above result and from
estimate \eqref{9.18.9} which, in particular,
gives an estimate of $v^{0}_{h}(t,x^{i+1})-
v^{0}_{h}(t,x^{i})$. The lemma is proved.
\end{proof}
}

  {\bf Proof of Theorem \ref{theorem 10.5.1}}.
{\color{black} First we assume that $H$ is independent
of $u'_{0}$. Then}
in what concerns the first assertion of Theorem
\ref{theorem 9.12.1} and estimates \eqref{9.22.3}
one derives them in the same way 
 as Theorem 5.2 in \cite{DK} is proved
{\color{black} relying on the properties of $v^{0}_{h}$.  

In the general case we use the Banach fixed point
theorem. To start we take a Lipschitz continuous
with respect to $(t,x)$ function $w(t,x)$
defined in $ \bar{\Omega}_{T}$ and equal to $g$
on the parabolic boundary of this set, and introduce the function
$$
H^{w}(u,t,x)=H(w(t,x) ,u'_{1},...,u'_{d},u'',t,x).
$$
Obviously, $H^{w}$ satisfies 
Assumption \ref{assumption 9.23.1}  with
$\delta/2$ in place of $\delta$ and $\bar{H}^{w}\leq\bar{H}
+  K_{0}  \bar{w}$ in place of $\bar{H}$,
where
$$
\bar{w}=\sup_{\Omega_{T}}|w(t,x) |.
$$
The function $H^{w}$ also satisfies \eqref{9.10.2}
and \eqref{3.6.4}
if we replace $\bar{H}$ with $\bar{H}
+ (K_{0}+1) \bar{w}$.
Finally, $H^{w}$
satisfies
\eqref{9.22.1} (with the same $N'$)
and \eqref{9.31.2} (with a different one).

By the above the equation
\begin{equation}
                                                       \label{6.4.4}
\partial_{t}v+\max(H^{w}(Dv,D^{2}v,t,x),P[v]-K)=0
\end{equation}
in $\Omega_{T}$ with boundary condition $v=g$
on $\partial'\Omega_{T}$ has a solution
 $v^{w}\in
C(\bar{\Omega}_T)\cap W^{1,2}_{\infty,\text{loc}}(\Omega_{T})$.
In addition,  
\begin{equation}
                                             \label{6.5.1}
|v^{w}|, |D v^{w}|,\rho |D^{2} v^{w} |,|\partial_t v^{w} |
\leq N(\bar{H}+\bar{w} +K
+\|g\|_{ W^{1,2}_{\infty}(\Omega_{T})})  
\end{equation}
in $\Omega_T$ (a.e.), where   $N$ is a constant depending only on $\Omega$,
$T$, $K_{0}$, and $\delta$. Due to the Lipschitz continuity of $H^{w}$
and  parabolic Alexandrov maximum principle,
the solution is unique, so that the notation $v^{w}$ is valid.

Next, 
$$
H^{w}(Dv^{w},D^{2}v^{w},t,x)=H (0,Dv^{w},D^{2}v^{w},t,x)
+cw,
$$
where 
$$
c=\frac{1}{w }[H (w ,Dv^{w},D^{2}v^{w},t,x)
-H (0,Dv^{w},D^{2}v^{w},t,x)]
\quad(0^{-1}0:=0)
$$
and owing to \eqref{9.22.1} we have $|c|\leq N'$. As  has already been seen
before (cf. the proof of Lemma \ref{lemma 9.13.1})
 this allows us to write
$$
\partial_{t}v^{w}+a_{ij}D_{ij}v^{w}+b_{i}D_{i}v^{w}
+c'w +f=0,
$$
where $a$ is an $\bS_{\check\delta}$-valued function
($\check{\delta}$ is introduced in
Remark \ref{remark 6.5.1}),
 $|b|\leq K_{0}$, $|c'|\leq|c|\leq N'$, $|f|\leq\bar{H}+K$.
By the maximum principle
$$
|v^{w}(t,x)|\leq N'\int_{t}^{T}
\sup_{x\in\Omega}|w(x,s)|\,ds+ T(\bar{H}+K)
+\sup_{\Omega_{T}}|g |
$$
in $\Omega_{T}$. 
It follows that if 
$$
 |w(t,x)|\leq  (T(\bar{H}+K)+\sup_{\Omega_{T}}|g|)
e^{N'(T-t)}=:\hat{w}(t),
$$
then the same inequality holds for $v^{w}$.

We now introduce $S$ as the subset of $C(\bar{\Omega}_T)
\cap W^{1,2}_{\infty,\text{loc}}(\Omega_{T})$
of functions $w$ such that $|w|\leq\hat{w}$ and
$$
|w|, |D w|,\rho |D^{2}w|,|\partial_t w |
\leq N(\bar{H}+\hat{w}(0) +K
+\|g\|_{ W^{1,2}_{\infty}(\Omega_{T})})  
$$
in $\Omega_T$ (a.e.), where   $N$ is the constant from \eqref{6.5.1}.
Obviously $S$ is a closed set and the mapping $R:w\to Rw:=v^{w}$
maps $S$ into $S$. Furthermore, if $u,w\in S$, then 
$$
H(u,DRu,D^{2}Ru)-H(w,DRw,D^{2}w)
$$
$$
=
a_{ij}D_{ij}(Ru-Rw)+b_{i}D_{i}(Ru-Rw)
+c( u- w),
$$
where $a$ is an $\bS_{\delta/2}$-valued function, and due to
\eqref{9.22.1} also $|b|\leq N'$, $|c|\leq N'$ (we allow ourselves
the liberty to use the same letters $a,b,c$ for objects which may
be different). Hence
$$
\partial_{t}(Ru-Rw)+a _{ij}D_{ij}(Ru-Rw)+b _{i}D_{i}(Ru-Rw)
+c ( u- w)=0.
$$
  By the maximum principle it follows that
$$
|(Ru-Rw)(t,x)|\leq N'\int_{t}^{T}\sup_{x\in\Omega}|(u-w)(s,x)|\,ds 
$$
in $\Omega_{T}$, which implies that there exists an integer $n $ such that $R^{n}$
is a contraction of $S$. By the Banach fixed point theorem
there exists $v\in S$ such that $Rv=v$. 

In particular, this proves the first assertion of Theorem
\ref{theorem 9.12.1} in the general case and in light of 
\eqref{6.5.1} shows that to prove \eqref{9.22.3}
it only remains to prove that 
\begin{equation}
                                                                \label{6.5.4}
\sup_{\Omega_{T}}|v|\leq e^{K_{0}T}(T\bar{H} +\sup_{\Omega_{T}}|g|).
\end{equation}   

Take $F_{K}$ from Remark \ref{remark 6.5.1}}
and notice that since $|F_{K}(u',0,t,x)|\leq \bar{H} 
+K_{0}|u'|$, there exist functions $b_{1},...,b_{d}$,
 $c$, and $f$ such that
$$
|b_{i}|,|c|\leq K_{0},\quad |f|\leq\bar{H},
$$
$$
 F_{K}(v(t,x),Dv(t,x),0,t,x)=b_{i}(t,x)D_{i}v(t,x)
+c(t,x)v(t,x)+f(t,x),
$$
so that
$$
0=\partial_{t}v(t,x)+ F_{K}[v](t,x)-F_{K}(v(t,x),Dv(t,x),0,t,x)
$$
$$
+b_{i}(t,x)D_{i}v(t,x)
+c(t,x)v(t,x)+f(t,x),
$$
\begin{equation}
                                                                \label{6.5.5}
\partial_{t}v+a_{ij}D_{ij}v+b_{i}(t,x)D_{i}v(t,x)
+c(t,x)v(t,x)+f(t,x)=0,
\end{equation}
where $(a_{ij})$ is an $\bS_{\check{\delta}}$-valued function.
{\color{black} 
  By the maximum principle
$$
|v(t,x)|\leq K_{0}\int_{t}^{T}\sup_{x\in\Omega}|v(s,x)|\,ds
+T \bar{H} +\sup_{\Omega_{T}}|g|,
$$
and   Gronwall's inequality yields \eqref{6.5.4}.

}

To prove \eqref{1.13.2} observe that
$$
 \max(H(v(t,x),Dv(t,x),u'',t,x),P(u'')-K)=
  P(u'')+G(u'',t,x),
$$
where
$$
G(u'',t,x)=(H(v(t,x),Dv(t,x),u'',t,x)-P(u'')+K) ^+ -K.
$$ 
Furthermore, in light of \eqref{9.22.3} and \eqref{3.6.4}
$$
|G(u'',t,x)|\leq(H(v(t,x),Dv(t,x),u'',t,x)-P(u'')+K) ^+  +K
$$
\begin{equation}
                                              \label{9.24.6}
\leq \bar{H}+K_{0}\big(|v(t,x)|+|Dv(t,x)|\big)+2K\leq N,
\end{equation}
where $N$ is a constant like the right-hand side of \eqref{9.22.3}.
Then set
$$
G(t,x)=G(D^{2}v(t,x),t,x)
$$
and observe that our function $v$ satisfies the equation
\begin{equation}
                                              \label{9.24.5}
\partial_{t}u(t,x)+P(D^{2}u(t,x))+G(t,x)=0
\end{equation}

 Since $P$ is convex with respect to
$u''$ and $G( t,x)$ is bounded, due to Theorem 1.1 of
\cite{DKL} there is a unique solution $u\in
W^{1,2}_p(\Omega_T)$ of \eqref{9.24.5}
 with  boundary condition
$u=g$ on $\partial'\Omega_T$. By uniqueness of
$W^{1,2}_{d+1,\text{loc}} (\Omega_T)\cap
C(\bar{\Omega}_T)$-solutions we obtain
$u= v \in W^{1,2}_{p} (\Omega_T)$.
  This allows us to apply a priori estimates from Theorem 1.1 of
\cite{DKL} and along with \eqref{9.24.6}
proves \eqref{1.13.2}.

Finally, \eqref{2.28.1} 
follows from classical results 
(see, for instance, \cite{Kr85}, \cite{Li})
since $v$ satisfies \eqref{6.5.5}. 
 The theorem 
is proved.\qed

\mysection{Proof of Theorem \protect\ref{theorem 9.23.01}}
                                         \label{section 9.21.1}

As in Section \ref{section 12.13.2} we     
easily reduce proving Theorem \ref{theorem 9.23.01}
to proving the following.  

\begin{theorem}
                                   \label{theorem 10.5.01}
Suppose that $g\in C^{2}(\bR^{d})$ and
 Assumption \ref{assumption 9.23.1} is satisfied with
$\delta/2$ in place of $\delta$. Also assume that 
\eqref{9.10.2} 
holds  at all points of differentiability of  
$H (u,t,x)$ with respect to
$u$.
 Finally, assume that
estimates 
 \eqref{9.31.2} and \eqref{9.22.1}  
with a constant $N'$ and
\eqref{3.6.4} 
 hold
 for any $t,s\in \bR$, $x,y\in\bR^{d}$, and $u,v$.
Then the assertions of Theorem \ref{theorem 9.23.01}  hold true.
\end{theorem}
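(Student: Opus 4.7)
The plan is to mirror the proof of Theorem \ref{theorem 10.5.1} in Section \ref{section 9.21.2}, exploiting the absence of a spatial boundary to eliminate the $\rho$-weighted estimates. I would first reduce to the case when $H$ is independent of $u'_0$ via exactly the Banach fixed-point argument used in Section \ref{section 9.21.2}: that argument is purely functional-analytic (freeze $w(t,x)$ for the $u'_0$-slot, solve for $v^w$, and iterate), and transfers to $Q_T$ without change once the reduced case is settled.

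For the reduced case, I would introduce the finite-difference approximation
$$
\partial_t v_h + \max(\cH(0,\delta_h v_h,\Delta_h v_h,t,x),P_h[v_h]-K)=0
$$
on $[0,T]\times h\Lambda_{\infty}^{h}$ with terminal condition $v_h(T,\cdot)=g$, producing a unique bounded solution $v_h$ via Picard iteration. Since there is no boundary, the auxiliary lemmas of Section \ref{section 5.3.1} become cleaner: the analogue of Lemma \ref{lemma 9.13.1} gives $|v_h|\le N(\bar H+K+\|g\|_{W^2_\infty})$ directly from the maximum principle; the analogue of Lemma \ref{lemma 9.13.2} applied with $\eta\equiv 1$ yields $|\Delta_{h,l_k}v_h|\le N(\bar H+K+\|g\|_{W^2_\infty})$ uniformly in $h$ (the right-hand side $\bar M_h$ being handled as in Lemma \ref{lemma 9.22.3} but now without the interpolation loss near $\partial\Omega$); and then the analogue of Lemma \ref{lemma 9.13.1}'s bound on $\partial_t v_h$ closes the loop. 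The $C^2$ regularity of $g$ is what replaces the $\rho$-weight used in the bounded-domain case, since it provides a direct terminal bound on $\Delta_h g$ which propagates backward in time via the maximum principle applied to the difference of \eqref{9.11.1} at successive grid points.

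Next I would obtain a uniform-in-$h$ modulus of continuity by repeating the proof of Lemma \ref{lem11.24} verbatim, invoking Lemma \ref{lemma 5.3.1} and \eqref{9.8.4}; because the second-difference bound no longer blows up, the resulting Lipschitz constant in $x$ and $t$ depends only on $\bar H+K+\|g\|_{W^2_\infty(\bR^d)}$. By Arzel\`a--Ascoli, a subsequence $v_{h_n}\to v^0$ locally uniformly on $\bar Q_T$; the stability argument of Lemma \ref{lemma 9.22.1} (via Theorem 3.5.9 of \cite{Kr85}) then identifies $v^0$ as an a.e.\ solution. Unfreezing the $u'_0$-slot through the Banach fixed-point argument delivers the solution $v$ of the general equation with the same pointwise estimates, and the H\"older bound follows from classical linear theory applied to the linearization \eqref{6.5.5} with $\bS_{\check\delta}$-valued coefficients, since the relevant constants are translation-invariant and $v$ satisfies such an equation globally on $Q_T$.

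The main obstacle I expect is verifying that the second-difference bound $|\Delta_{h,l_k}v_h|\le N$ is truly global in $x$. In the bounded-domain case Lemma \ref{lemma 9.16.1} used a cut-off together with Lemma \ref{lemma 9.13.2} to handle the boundary, producing the $(\rho-6\lambda h)$-weight. Here the cut-off can be dispensed with, but one must still show that supremum appearing on the right of Lemma \ref{lemma 9.13.2} is attained at $t=T$ (so controlled by $\|g\|_{W^2_\infty}$) rather than escaping to infinity in $x$. This is where the boundedness of $v_h$ itself and the translation-invariance of the estimate are crucial: the terminal condition $\Delta_h g$ is uniformly bounded by $\|g\|_{W^2_\infty}$, and no other ``boundary'' contributes to the sup.
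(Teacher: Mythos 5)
Your plan has a genuine gap and also takes a longer route than the paper. The gap: you correctly flag the crucial obstacle at the end --- that on $\bR^{d}$ the suprema appearing on the right of the maximum-principle bounds might escape to infinity rather than being anchored at the terminal surface $\{T\}\times\bR^{d}$ --- but you do not supply the tool that resolves it. Invoking ``translation invariance of the estimate'' does not close this: a translation-invariant quantity can fail to attain its supremum. The paper supplies the missing tool as Lemma~\ref{lemma 7.21.1}, a finite-difference maximum principle on $(0,T)\times\Lambda^{h}_{\infty}$ obtained by weighting with the exponentially growing barrier $\zeta(x)=\cosh|x|$ and using the boundedness of $v_{h}$ to force the supremum of $w=v\zeta^{-1}$ to the parabolic boundary. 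Your appeal to ``Lemma~\ref{lemma 9.13.2} with $\eta\equiv 1$'' is not justified on $\bR^{d}$: that lemma (a particular case of Theorem 4.3 of \cite{DK}) is stated only for $Q\subset[0,T]\times\Omega^{h}$, i.e., precompact sets, and its conclusion does not extend to the whole space without the barrier argument. The same remark applies to your bounds on $|v_{h}|$ and $|\partial_{t}v_{h}|$: in the paper these are Corollary~\ref{corollary 9.10.1} and Lemma~\ref{lemma 9.14.2}, both of which rest on Lemma~\ref{lemma 7.21.1} rather than on the cylinder lemmas.

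Separately, your opening and closing via the Banach fixed-point argument is the opposite of what the paper does, and it misses the structural simplification that makes $Q_{T}$ easier than $\Omega_{T}$. On $\bR^{d}$ the bound \eqref{9.11.07} for $\Delta_{h,l_{k}}v_{h}$ is \emph{global}, not $\rho$-weighted. Hence when one applies $T_{h,l}-1$ to the equation and uses Hadamard's formula together with Lemma~\ref{lemma 5.3.1}, the coefficient $c_{hk}$ of $w_{h}$ --- essentially $\cH_{u'_{0}}$, controlled by $\Delta_{h}v_{h}$ --- stays bounded, so the modulus-of-continuity argument of Lemma~\ref{lem11.24} goes through \emph{with} the $u'_{0}$-slot intact, yielding \eqref{6.8.1}. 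The paper then explicitly finishes ``dropping the part of the proof dealing with the fixed point argument.'' Your detour is not wrong in principle (establishing the contraction would again need the unbounded-domain maximum principle), but it is redundant and hides the reason the whole-space case admits a cleaner proof.
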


To prove Theorem \ref{theorem 10.5.01}
consider the equation
\begin{equation}
                                              \label{2.25.03}
\partial_t v+H_{K,h}[v]=0\quad \text{in}\quad
[0,T]\times\bR^{d}   
\end{equation}
 with terminal  condition
\begin{equation}
                                              \label{2.25.04}
v(T,x)=g(x)\quad\text{on}\quad \bR^{d}
 \end{equation}

In view of Picard's method  of  successive  approximations
 for any $h>0$ there 
exists a unique bounded solution $v=v_{ h}$ of
\eqref{2.25.03}--\eqref{2.25.04}.
Furthermore, $\partial_{t}v_{h}$ is bounded and continuous
with respect to $t$ for any $x$.  

We need a version of Lemma 4.2 of \cite{DK} for unbounded domains,
in which $Q^{o},\hat{Q}^{o},Q$ are generic
 objects described in Section \ref{section 5.3.1}
before assumption \eqref{9.25.1} was made.

\begin{lemma}
                                       \label{lemma 7.21.1}
Let $(a,b,c)(t,x)$ be
a bounded $\bR^{ m }\times\bR^{d}\times \bR$-valued
  function on
$\bR^{d+1}$  satisfying $a_k\ge 0$  and
$hb_{k}^{-}\leq  a_{k}$. Also let $h>0$ be small enough
for the arguments in the proof to go through.  
Let $v(t,x)$ be a bounded
function in $Q $ which is absolutely continuous
with respect to $t$
 on each open interval belonging to $Q^{o}_{|x}$
(if it is nonempty)
and for any $x\in  \Lambda_{\infty}^{ h}$
satisfying
$$
\partial_{t}v+Lv:=
\partial_{t}v+\sum_{k=1}^{m}a_{k}\Delta_{h,l_k}v
+\sum_{k=1}^{d}b_{k} \delta_{h,l_k}v-cv=-\eta
$$
(a.e.) on  $Q^{o}_{|x}$, where $\eta=\eta(t,x)$
 is a bounded function. Redefine $v$ if necessary
for $(t,x)\in\hat{Q}^{o}\setminus Q^{o}$ so that
$$
v(t,x)=\nlimsup_{s\uparrow t,(s,x)\in Q^{o}}v(s,x).
$$
Then  in $Q^{o}$ we have
$$
v\leq Te^{\bar{c}T}\sup_{Q^{o}}\eta_{+}+e^{\bar{c}T}
\sup_{Q\setminus Q^{o}}
v^{+},
$$
where $\bar{c}=\sup c^{-}$,
\end{lemma}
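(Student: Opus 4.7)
The plan is to reduce the estimate to a semi-discrete parabolic comparison principle via an explicit spatially constant barrier. I set $M_1 = \sup_{Q^o}\eta^+$, $M_2 = \sup_{Q\setminus Q^o} v^+$, and
$$
\psi(t) := e^{\bar c(T-t)}\bigl[M_2 + (T-t)M_1\bigr].
$$
Because $\psi$ is spatially constant we have $L\psi = -c\psi$, and a direct computation using $\bar c + c \geq 0$ gives
$$
\partial_t \psi + L\psi = -(\bar c+c)\psi - e^{\bar c(T-t)}M_1 \leq -M_1 \leq -\eta.
$$
Thus $w := v - \psi$ satisfies $\partial_t w + Lw \geq 0$ a.e.\ on every $Q^o_{|x}$, while $\psi(t)\geq M_2$ gives $w \leq 0$ on $Q\setminus Q^o$. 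Since $\psi(t) \leq e^{\bar c T}[M_2 + TM_1]$ for $t\in[0,T]$, the desired estimate will follow once one shows $w \leq 0$ on $Q^o$.

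The remaining task is a maximum principle for functions that are bounded, upper-semicontinuous from the left in $t$ on $\hat Q^o$ (this is exactly what the $\overline{\lim}_{s\uparrow t}$ redefinition supplies), satisfy $\partial_t w + Lw \geq 0$ a.e.\ on each time-component of $Q^o_{|x}$, and vanish on the boundary $Q\setminus Q^o$. The key algebraic input uses the assumption $hb_k^-\leq a_k$ and the identity
$$
a_k \Delta_{h,l_k} w + b_k \delta_{h,l_k} w = h^{-2}\bigl[(a_k + h b_k)(w(x+hl_k)-w(x)) + a_k(w(x-hl_k)-w(x))\bigr],
$$
valid for $k=1,\dots,d$; for indices $k=d+1,\dots,m$ only the $a_k\Delta_{h,l_k}$ term occurs. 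Under the CFL condition both weights $a_k+hb_k$ and $a_k$ are non-negative, so at any spatial maximum point $x^*$ of $w(t,\cdot)$ among the lattice neighbors $\{x^*,x^*\pm hl_k\}$, summation yields $\sum a_k \Delta_{h,l_k}w + \sum b_k\delta_{h,l_k}w \leq 0$ and hence $Lw(t,x^*) \leq \bar c\, w^+(t,x^*)$. Note that the definition of $Q$ as $\hat Q^o \cup \{(t,x+h\Lambda):(t,x)\in Q^o\}$ ensures that all these lattice neighbors lie in $Q$, so the comparison with $w\leq 0$ on $Q\setminus Q^o$ or $w \leq M^*$ on $Q^o$ is legitimate.

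To turn this pointwise observation into a global comparison I perturb: for a constant $A > \sup|c|$ and $\varepsilon>0$, set $w_\varepsilon = w - \varepsilon e^{A(T-t)}$, so that $\partial_t w_\varepsilon + Lw_\varepsilon \geq \varepsilon e^{A(T-t)}(A+c) > 0$ strictly on $Q^o$ and $w_\varepsilon \leq -\varepsilon$ on $Q\setminus Q^o$. Assuming for contradiction that $M^* := \sup_{Q} w_\varepsilon > 0$, I extract maximizing points; thanks to the upper-semicontinuity from the left on $\hat Q^o$, for each candidate spatial point $x_n$ the supremum of $w_\varepsilon(\cdot,x_n)$ over the closure of its time-component of $Q^o_{|x_n}$ is realized at some $t_n^*$, at which absolute continuity yields a one-sided time derivative $\leq 0$. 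Combined with the inequality $Lw_\varepsilon(t_n^*,x_n) \leq \bar c\, w_\varepsilon(t_n^*,x_n)+o(1)$ from the spatial maximum estimate (applied after an additional extraction to arrange that $x_n$ is nearly maximizing in space), this contradicts $\partial_t w_\varepsilon + Lw_\varepsilon > 0$ once $\varepsilon\cdot e^{A(T-t)}A$ beats the $o(1)$ errors. Letting $\varepsilon\to 0$ gives $w\leq 0$ on $Q^o$ and completes the proof. The main obstacle is precisely this extraction step: because $\Lambda_\infty^h$ is infinite and $Q^o$ is merely a generic open subset, compactness is not automatic, and one must carefully couple the upper-semicontinuity from below in time with the discreteness of the spatial lattice to justify that a supremum is effectively attained on a lattice star where the CFL-based inequality applies.
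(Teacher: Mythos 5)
Your barrier computation is correct and mirrors the paper's reduction: after the reduction to $c\geq0$ the paper uses exactly the comparison function $(T-t)\sup_{Q^o}\eta^++\sup_{Q\setminus Q^o}v^+$, and you have verified that your $\psi$ is a valid supersolution so that everything hinges on a discrete maximum principle for $w=v-\psi$ on the unbounded set $Q^o$. But there the argument has a genuine gap, which you yourself flag at the end: since $\Lambda_\infty^h$ is infinite and $v$ (hence $w$, $w_\varepsilon$) is merely bounded, there is no reason for $\sup_Q w_\varepsilon$ to be attained, and your perturbation $w_\varepsilon=w-\varepsilon e^{A(T-t)}$ is spatially constant and thus does nothing to localize the supremum. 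The subsequent ``extraction of nearly maximizing points and $o(1)$ errors'' is not carried out and would require careful bookkeeping involving the time direction (the time-supremum over $\hat Q^o_{|x}$ need not be attained either, because $Q^o_{|x}$ can be a union of infinitely many intervals whose endpoints need not lie in $\hat Q^o$) as well as the spatial direction, so it cannot stand as a proof.

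The paper's proof closes precisely this gap with a spatial weight: after the same reduction (to $c\geq0$, $\eta\leq0$, $v\leq0$ on $Q\setminus Q^o$) it sets $\zeta(x)=\cosh|x|$, uses $|\delta_{h,l_k}\zeta|+|\Delta_{h,l_k}\zeta|\leq N'\zeta$ to write a finite-difference equation for $w:=v\zeta^{-1}$, and multiplies by $e^{\lambda(T-t)}$ for $\lambda$ large enough that the zeroth-order coefficient becomes nonpositive and the CFL-type sign conditions needed in Lemma 4.2 of \cite{DK} hold for small $h$. Since $|w|=|v|\zeta^{-1}\to0$ as $|x|\to\infty$, one can apply the bounded-domain maximum principle on $Q^o\cap\bigl[(0,T)\times B_R\bigr]$ and let $R\to\infty$, forcing $w\leq0$ and hence $v\leq0$. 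This exponential-weight-plus-exhaustion idea is the missing ingredient in your argument.
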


\begin{proof} First, as in \cite{DK} we reduce the general case
to the one where $c\geq0$. Then, by considering
$$
v(t,x)-(T-t) \sup_{Q^{o}}\eta^{+}-\sup_{Q\setminus Q^{o}}v^{+} ,
$$
we reduce the general case to the one with $\eta\leq0$
and $v\leq 0$ on $Q\setminus Q^{o}$.

Observe that for $\zeta(x)=\cosh |x|$ we have
$$
|D\zeta|+|D^{2}\zeta|\leq N'\zeta,
$$
where $N'$ depends only on $d$. It follows that for
a different $N'$, $h\in(0,1)$, and $k=1,...,m$
$$
|\delta_{h,l_{k}}\zeta|+|\Delta_{h,l_{k}}\zeta|\leq N'\zeta.
$$
Hence, the bounded function $w:=v\zeta^{-1}$ satisfies
$$
-\eta=\partial_{t}(w\zeta )+L(w\zeta )
=\zeta\partial_{t}w+\zeta\sum_{k=1}^{m}a_{k}\Delta_{h,l_k}w
$$
$$
+\zeta\sum_{k=1}^{m}a_{k}[c_{-k}\delta_{h,-l_{k}}w
+c_{ k}\delta_{h, l_{k}}w]
+\zeta\sum_{k=1}^{d}\bar{b}_{k}\delta_{h,l_{k}}w
+\zeta \bar{c}w
$$
where
$c_{\pm k}=\zeta^{-1}\delta_{h,\pm l_{k}}\zeta$,
$\bar{b}_{k}=b_{k}\zeta^{-1}T_{h,l_{k}}\zeta$,
$$
\bar{c}=-c+\zeta^{-1}\sum_{k=1}^{m}\Delta_{h,l_{k}}\zeta
+\zeta^{-1}\sum_{k=1}^{d}b_{k}\delta_{h,l_{k}}\zeta.
$$
It follows that for any constant $\lambda>0$ we have 
$$
\partial_{t}(we^{\lambda(T-t)})
+\sum_{k=1}^{m}a_{k}\Delta_{k}(we^{\lambda(T-t)})
+ \sum_{k=1}^{d}\bar{b}_{k}\delta_{h,l_{k}}(we^{\lambda(T-t)})
$$
\begin{equation}
                                                    \label{9.9.1}
+ \sum_{k=1}^{m}a_{k}[c_{-k}\delta_{h,-l_{k}}
+c_{ k}\delta_{h, l_{k}} ](we^{\lambda(T-t)})
+ ( \bar{c}-\lambda)(we^{\lambda(T-t)})\geq0.
\end{equation}
For $\lambda$ sufficiently large and $h$ sufficiently small
we have $\bar{c}-\lambda\leq0$  and the coefficients
in \eqref{9.9.1} satisfy other conditions of 
 Lemma 4.2 of \cite{DK} {\color{black}
which guarantee that the finite-difference operator
involved in the left-hand side of \eqref{9.9.1}
obeys the maximum principle, that is
$$
-h\bar{b}_{k}+h2a_{k} |c_{ k}|\leq 2a_{k}
$$
for all $k$ which is true if $h$ is sufficiently small.
This} allows us to conclude that
for any $R\in(0,\infty)$ on 
$  Q^{o}\cap\big[(0,T)\times B_{R}\big]$ we have
$$
w(t,x)e^{\lambda(T-t)}\leq\sup\{w^{+}(s,x)
e^{\lambda(T-s)}:(s,x)\in Q, |x|\geq R\}.
$$
Here the right-hand side goes to zero as $R\to\infty$
since $|w|=|v|\zeta^{-1}$ and $v$ is bounded. Hence
$w\leq0$ and this proves the lemma.
\end{proof}

\begin{corollary}
                                        \label{corollary 9.10.1}
There exists a constant $N$ depending only on
  $d$  and $K_{0}$ such that for all
sufficiently small $h$ we have
\begin{equation}
                                           \label{9.11.5}
|v_{h}|\leq Ne^{NT}(\bar{H}+K+\sup|g|).
\end{equation}
\end{corollary}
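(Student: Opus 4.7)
My plan is to follow the pattern already used in the proof of Lemma~\ref{lemma 9.13.1}: linearize the equation via Hadamard's formula and then invoke Lemma~\ref{lemma 7.21.1}. The key new ingredient is a pointwise decomposition of the zeroth-order remainder that produces coefficients bounded uniformly in $h$, and in particular does not rely on any a priori control of the first-order differences of $v_{h}$.

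Starting from $\partial_{t}v_{h}+H_{K,h}[v_{h}]=0$, exactly as in \eqref{9.11.1} I would write
$$
\partial_{t}v_{h}+\sum_{k=1}^{m}a_{k}\Delta_{h,l_{k}}v_{h}+f(t,x)=0,
$$
where $\hat\delta/2\le a_{k}\le 2\hat\delta^{-1}$ and $f(t,x)=\max\bigl(H(v_{h},\delta_{h}v_{h},0,t,x),-K\bigr)$, using the identity $\cH(u',0,\cdot)=H(u',0,\cdot)$. Assumption~\ref{assumption 9.23.1}(ii) gives the pointwise bound
$$
|f(t,x)|\le \bar H+K+K_{0}\Bigl(|v_{h}(t,x)|+\sum_{i=1}^{d}|\delta_{h,e_{i}}v_{h}(t,x)|\Bigr).
$$

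The decisive step is to write $f=cv_{h}+\sum_{i}b_{i}\delta_{h,e_{i}}v_{h}+r$ as an exact pointwise identity with measurable coefficients satisfying $|c|,|b_{i}|\le K_{0}$ and $|r|\le \bar H+K$. Set $M=\bar H+K$ and $U=|v_{h}|+\sum_{i}|\delta_{h,e_{i}}v_{h}|$, take $r=\min(|f|,M)\operatorname{sgn}(f)$, so that $|f-r|\le K_{0}U$, and distribute $f-r$ by letting $\alpha=(|f|-M)_{+}/(K_{0}U)\in[0,1]$ on $\{U>0\}$ and setting $c=K_{0}\alpha\operatorname{sgn}(f)\operatorname{sgn}(v_{h})$, $b_{i}=K_{0}\alpha\operatorname{sgn}(f)\operatorname{sgn}(\delta_{h,e_{i}}v_{h})$ (and $c=b_{i}=0$ if $U=0$, in which case $|f|\le M$ forces $r=f$). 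Since $l_{i}=e_{i}$ for $i\le d$, the linearized equation then takes the form
$$
\partial_{t}v_{h}+\sum_{k=1}^{m}a_{k}\Delta_{h,l_{k}}v_{h}+\sum_{i=1}^{d}b_{i}\delta_{h,l_{i}}v_{h}-(-c)v_{h}=-r,
$$
which is precisely of the type to which Lemma~\ref{lemma 7.21.1} applies.

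For $h$ so small that $hK_{0}\le\hat\delta/2$, the drift condition $hb_{i}^{-}\le a_{i}$ holds automatically. Taking $Q^{o}=(0,T)\times\Lambda_{\infty}^{h}$, the complement $Q\setminus Q^{o}$ is contained in the terminal slice $\{T\}\times\Lambda_{\infty}^{h}$ where $v_{h}=g$, and $\bar c\le K_{0}$. Applying Lemma~\ref{lemma 7.21.1} in turn to $v_{h}$ and to $-v_{h}$ (which satisfies the same linearized equation with $-r$ in place of $r$) produces the two-sided bound $|v_{h}|\le Te^{K_{0}T}(\bar H+K)+e^{K_{0}T}\sup|g|$, which is \eqref{9.11.5}. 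The only step that is not essentially routine is the algebraic decomposition of $f$; once it is in hand, the corollary follows directly from the tools already prepared.
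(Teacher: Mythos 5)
Your proposal is correct and carries out exactly what the paper itself indicates: it repeats the linearization from the first part of the proof of Lemma~\ref{lemma 9.13.1} (Hadamard's formula producing \eqref{9.11.1} together with the pointwise bound on $f$) and then replaces the bounded-domain maximum-principle argument with an application of Lemma~\ref{lemma 7.21.1} on $Q^{o}=(0,T)\times\Lambda_{\infty}^{h}$. A minor point in your favor: by bounding $f$ directly through Assumption~\ref{assumption 9.23.1}(ii) and splitting $f=cv_{h}+\sum_i b_{i}\delta_{h,e_{i}}v_{h}+r$ with $|c|,|b_{i}|\le K_{0}$ and $|r|\le\bar H+K$, rather than through \eqref{9.8.3} with the constant $N_{1}=N(d)K_{0}'$ as done in the proof of Lemma~\ref{lemma 9.13.1}, you obtain the dependence on only $d$ and $K_{0}$ claimed in the statement cleanly.
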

This corollary is obtained from
  Lemma \ref{lemma 7.21.1} 
by repeating the first part of the proof of 
Lemma \ref{lemma 9.13.1}.

\begin{lemma}
                                         \label{lemma 9.14.2}
There exists a constant $N$ depending only on
  $d$, $\delta$, $T$, and $K_{0}$ such that for all
sufficiently small $h$ we have
\begin{equation}
                                              \label{eq10.12}
  |\partial_t v_{ h}|
\leq N(\bar{H}+K+\|g\|_{C^{ 2}(\bR^{d})}+
\sup_{(0,T)\times\bR^{d}}\sum_{k=1}^{m}|
\delta_{h,l_{k}}v_{h}|),
\end{equation}
\begin{equation}
                                              \label{9.11.3}
  \sum_{k=1}^{m}|\Delta_{h,l_{k}} v_{ h}|
\leq N(\bar{H}+K+\|g\|_{C^{ 2}(\bR^{d})}+
\sup_{(0,T)\times\bR^{d}}\sum_{k=1}^{m}|
\delta_{h,l_{k}}v_{h}|)
\end{equation}
on $(0,T)\times\bR^{d}$. 
\end{lemma}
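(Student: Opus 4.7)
\textbf{Proof plan for Lemma \ref{lemma 9.14.2}.} The strategy parallels the proofs of Lemmas \ref{lemma 9.13.1} and \ref{lemma 9.16.1}, but with the cylinder $\Omega_{T}$ replaced by $(0,T)\times\bR^{d}$, so that the boundary arguments (cutoffs $\eta_{\mu}$, distance function $\rho$) are no longer needed; on the other hand, the maximum principle must be invoked in the unbounded-domain form supplied by Lemma \ref{lemma 7.21.1}. Set $V_{h}=\sup|v_{h}|$ (controlled by Corollary \ref{corollary 9.10.1}) and let
$$
M_{h}(t,x)=\sum_{k=1}^{m}|\delta_{h,l_{k}}v_{h}(t,x)|,\qquad \bar M_{h}=\sup_{(0,T)\times\bR^{d}} M_{h}.
$$
Introduce
$$
G=\Big\{(t,x)\in(0,T)\times\bR^{d}: (\hat\delta/2)\sum_{k=1}^{m}|\Delta_{h,l_{k}}v_{h}(t,x)|>\bar H+K+K_{0}\bigl(|v_{h}|+M_{h}\bigr)(t,x)\Big\}
$$
and $Q^{o}=G\cap\bigl((0,T)\times\Lambda^{h}_{\infty}\bigr)$, with $\hat Q^{o}$ and $Q$ defined as in Section \ref{section 5.3.1}. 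By Lemma \ref{lemma 9.14.1}, on $Q^{o}$ we have $\cH\leq\cP-K$, hence $H_{K,h}[v_{h}]=P_{h}[v_{h}]-K$ and
\begin{equation}
                                      \label{planEq1}
\partial_{t}v_{h}+P_{h}[v_{h}]=K\quad\text{in}\quad Q^{o},\qquad \partial_{t}v_{h}+P_{h}[v_{h}]\leq K\quad\text{in}\quad (0,T)\times\Lambda^{h}_{\infty}.
\end{equation}

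First I would establish \eqref{9.11.3}. On $Q\setminus Q^{o}$ either $t=T$, in which case $|\Delta_{h,l_{k}}v_{h}|=|\Delta_{h,l_{k}}g|\leq N\|g\|_{C^{2}(\bR^{d})}$, or the definition of $G$ fails so that $\sum_{k}|\Delta_{h,l_{k}}v_{h}|\leq N(\bar H+K+V_{h}+\bar M_{h})$. To propagate this from $Q\setminus Q^{o}$ into $Q^{o}$, I would repeat the convexity argument behind Lemma \ref{lemma 9.13.2}: applying $\Delta_{h,l_{r}}$ to \eqref{planEq1} and using the convexity and positive-homogeneity of $\cP$ produces a finite-difference subcaloric inequality for $w:=(\Delta_{h,l_{r}}v_{h})^{-}$ of the form $\partial_{t}w+\sum a_{k}\Delta_{h,l_{k}}w\geq 0$ with $\hat\delta/2\leq a_{k}\leq 2\hat\delta^{-1}$; since $v_{h}$ (and hence $w$) is bounded in the whole space, I then apply the unbounded-domain maximum principle of Lemma \ref{lemma 7.21.1} (with weight $\cosh|x|$) to conclude that $\sup_{Q^{o}}(\Delta_{h,l_{r}}v_{h})^{-}$ is bounded by its supremum over $Q\setminus Q^{o}$. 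For the positive part, the second relation in \eqref{planEq1} combined with the explicit form of $\cP$ gives
$$
2\hat\delta^{-1}\sum_{r}(\Delta_{h,l_{r}}v_{h})^{+}\leq -\partial_{t}v_{h}+(\hat\delta/2)\sum_{r}(\Delta_{h,l_{r}}v_{h})^{-}+K,
$$
which, once a bound on $\partial_{t}v_{h}$ is in hand, completes the proof of \eqref{9.11.3}.

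For \eqref{eq10.12}, on $(0,T)\times\Lambda^{h}_{\infty}\setminus Q^{o}$ the bound $\sum_{k}|\Delta_{h,l_{k}}v_{h}|\leq N(\bar H+K+V_{h}+\bar M_{h})$ combined with the equation and \eqref{9.8.3} (as in \eqref{9.11.1}--\eqref{9.11.2}) directly controls $\partial_{t}v_{h}$. On $Q^{o}$, differentiating \eqref{planEq1} in $t$ and applying Hadamard's formula to $\cP(\Delta_{h}v_{h})$ (as in the proof of Lemma \ref{lemma 9.13.1}) yields
$$
\partial_{t}(\partial_{t}v_{h})+a_{k}\Delta_{h,l_{k}}(\partial_{t}v_{h})=0\quad\text{in}\quad Q^{o}\text{ (a.e.\ in }t\text{)},
$$
with $\hat\delta/2\leq a_{k}\leq 2\hat\delta^{-1}$. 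Lemma \ref{lemma 7.21.1} then bounds $\sup_{Q^{o}}|\partial_{t}v_{h}|$ by its supremum over $Q\setminus Q^{o}$, which was just controlled. Putting everything together gives \eqref{eq10.12}, and feeding this into the inequality for $\sum_{r}(\Delta_{h,l_{r}}v_{h})^{+}$ yields \eqref{9.11.3}. Iteration in $t$ (together with $V_{h}$ already bounded) produces the final form.

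The step I expect to be most delicate is the passage from \eqref{planEq1} to the subcaloric inequality for $(\Delta_{h,l_{r}}v_{h})^{-}$ in the whole-space setting: Lemma \ref{lemma 9.13.2} is stated for $Q\subset[0,T]\times\Omega^{h}$ using a compactly supported cutoff $\eta$, and here one must replace that with the exponential weight of Lemma \ref{lemma 7.21.1} and verify that the resulting weighted coefficients still satisfy the conditions $a_{k}\geq 0$, $hb_{k}^{-}\leq a_{k}$ required there. Once this is done the rest is bookkeeping of the constants $\bar H$, $K$, $\|g\|_{C^{2}}$, $\bar M_{h}$ exactly as in Section \ref{section 5.3.1}.
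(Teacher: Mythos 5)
Your proposal follows essentially the same route as the paper: decompose into $Q^o$ and its complement, bound $\partial_t v_h$ and $(\Delta_{h,l_r}v_h)^-$ on $Q^o$ by deriving a homogeneous finite-difference parabolic inequality and invoking the unbounded-domain maximum principle of Lemma~\ref{lemma 7.21.1}, control them on $Q\setminus Q^o$ from the failure of the $G$-condition or the terminal datum, and recover $(\Delta_{h,l_r}v_h)^+$ from the equation once $\partial_t v_h$ is bounded. The step you flag as most delicate is in fact a non-issue: the inequality for $\Delta_{h,l_r}v_h$ on $Q^o$ contains only the $a_k\Delta_{h,l_k}$ terms and no first-order differences, so the hypotheses $a_k\geq0$, $hb_k^-\leq a_k$ of Lemma~\ref{lemma 7.21.1} are vacuously satisfied, and the exponential weight $\cosh|x|$ is handled entirely inside that lemma once one notes (as you do) that $\Delta_{h,l_r}v_h$ is bounded for each fixed $h$.
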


\begin{proof} One proves \eqref{eq10.12} in the same way as
\eqref{9.13.2} with the only difference that
instead of Lemma 4.2 of \cite{DK} one uses
Lemma \ref{lemma 7.21.1}.

In case of  \eqref{9.11.3} we add to \eqref{9.16.7}
the fact that the left-hand side of \eqref{9.16.7}
is nonpositive outside 
$$
Q^{o} :=\{(t,x)\in(0,T)\times\Lambda^{h}_{\infty}:
(\hat\delta/2)\sum_{k=1}^{m}
|\Delta_{h,l_{k}}v_{h}(t,x)|> 
$$
$$
>\bar{ H}+K+K_{0}
\big(|v_{h}(t,x)|+M_{h}(t,x)  
\big)\},
$$
 Hence, for any
$r\in\{1,...,m\}$ on $Q^{o}$ there exist
functions $a_{k} $
satisfying $\hat{\delta}/2\leq a_{k}\leq 2\hat{\delta}^{-1}$
 such that
on every $x$-section of $Q^{o}$ (a.e.)  we have
$$
\partial_t (\Delta_{h,l_{r}}v_h) + a_k\Delta_{h,l_k} 
(\Delta_{h,l_{r}}v_h) \leq0.
$$
It follows by Lemma \ref{lemma 7.21.1} that in $Q^{o}$
$$
(\Delta_{h,l_{r}}v_h)^{-} 
\leq\sup_{ (0,T] \times \Lambda^{h}_{\infty}
  \setminus  Q^{o}}
(\Delta_{h,l_{r}}v_h)^{-}.
$$
Now the continuity of $\Delta_{h,l_{r}}v_h$ with respect to $t$
and  the definition of $Q^{o}$
show  that
 $(\Delta_{h,l_{r}}v_h)^{-} $ is dominated by the right-hand side of
\eqref{9.11.3}.
Then equation \eqref{9.11.1} combined with estimates
\eqref{9.11.2}, \eqref{eq10.12}, and \eqref{9.11.5}
allow us to conclude that also 
$(\Delta_{h,l_{r}}v_h)^{+} $ is dominated by the right-hand side of
\eqref{9.11.3}. This proves the lemma. 
\end{proof}

Our next step is to exclude $|\delta_{h,l_{k}}v_{h}|$ from the 
right-hand side of \eqref{eq10.12} and \eqref{9.11.3}
by using interpolation, that is by using \eqref{9.24.7},
which for $w(i) =v_{h}(t,x+ihl_{k})$,
where $(t,x)\in(0,T)\times\bR^{d}
$,    $h<1$, and integer $r\geq 2$ yields that 
$$
|\delta_{h,l_{k}}v_{h}(t,x)|
\leq \frac{1}{2}rh\max_{|i|\leq r}|
\Delta_{h,l_{k}}v_{h}(t,x+ihl_{k})|+\frac{4}{rh}
\max_{|i|\leq r}|
 v_{h}(t,x+ihl_{k})|.
$$
In light of the arbitrariness of $r\geq2$ and
 \eqref{9.11.5}
 and \eqref{9.11.3} we conclude that
for any $\varepsilon\geq 2h$
$$
|\delta_{h,l_{k}}v_{h}|\leq
N\varepsilon^{-1}(\bar{H}+K+\sup|g|)
$$
$$
+N\varepsilon(\bar{H}+K+\|g\|_{C^{ 2}(\bR^{d})}+
\sup_{(0,T)\times\bR^{d}}\sum_{k=1}^{m}|
\delta_{h,l_{k}}v_{h}|).
$$

 It follows that for all sufficiently
small $h$ we have 
\begin{equation}
                                               \label{9.11.6}
\sup_{(0,T)\times\bR^{d}}\sum_{k=1}^{m}|
\delta_{h,l_{k}}v_{h}|\leq
N(\bar{H}+K+\|g\|_{C^{ 2}(\bR^{d})}),
\end{equation}
\begin{equation}
                                               \label{9.11.07}
\sup_{(0,T)\times\bR^{d}}\big(|v_{h}|+
|\partial_{t}v_{h}|+\sum_{k=1}^{m}|
\Delta_{h,l_{k}}v_{h}|\big)\leq
N(\bar{H}+K+\|g\|_{C^{ 2}(\bR^{d})}).
\end{equation}

{\color{black}Observe that, in contrast with \eqref{9.18.9},
  \eqref{9.11.07} yields a global estimate
of $\Delta_{h,l_{k}}v_{h}$. This allows us to repeat the
proof of Lemma \ref{lem11.24} without excluding
$u'_{0}$ from $\cH_{K}$ and in place of \eqref{6.3.7}
obtain
\begin{equation}
                                          \label{6.8.1}
\partial_{t}w_{h}+a_{hk}\Delta_{h,l_{k}}w_{h} 
+b_{hk}\delta_{h,e_{k}}w_{h}+c_{hk} w_{h}
+f_{h}h =0
\end{equation}
in $(0,T)\times\bR^{d}$,
which implies the following.
}
 
\begin{corollary}
                                        \label{corollary 9.18.1}
There is a constant $M$, 
which may depend  on $N'$, such that for
all  $h>0$, $t \in (0,T]$, and $x,y \in \bR^{d}$, we have
$$
|v_{h}(t,x)-v_{h}(t,y)|\leq
M(|x-y|+h).
$$
\end{corollary}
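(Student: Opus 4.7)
The plan is to carry out, in the whole-space setting, the same argument used in the proof of Lemma~\ref{lem11.24}, but now keeping the $u'_{0}$-argument of $\cH_{K}$ intact. The new ingredient that makes this possible is that estimates \eqref{9.11.5}, \eqref{9.11.6}, and \eqref{9.11.07} give \emph{global} $(t,x)$-uniform bounds on $v_{h}$, $\delta_{h}v_{h}$, and $\Delta_{h}v_{h}$, whereas in the bounded-domain case of Lemma~\ref{lem11.24} analogous bounds blew up like $\rho^{-1}$ near $\partial\Omega$.

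First, I would fix $l\in\bR^{d}$ with $|l|\leq1$ and set $w_{h}(t,x)=v_{h}(t,x+hl)-v_{h}(t,x)$. Writing $\partial_{t}v_{h}+\cH_{K}(v_{h},\delta_{h}v_{h},\Delta_{h}v_{h},t,\cdot)=0$ at $x+hl$ and at $x$ and subtracting, I would split the resulting difference of $\cH_{K}$ values into four pieces by successively freezing one argument at a time, as in the derivation of \eqref{6.3.7}. Hadamard's formula together with the uniform ellipticity property \eqref{9.8.2} handles the second-order-difference slot and produces the term $a_{hk}\Delta_{h,l_{k}}w_{h}$ with $\hat{\delta}/2\leq a_{hk}\leq 2\hat{\delta}^{-1}$. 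The Lipschitz estimate of Lemma~\ref{lemma 5.3.1}, applied to the change of the $u'_{1},\dots,u'_{d}$ argument, produces $b_{hk}\delta_{h,e_{k}}w_{h}$ with $|b_{hk}|$ controlled by $1+|v_{h}|+|\delta_{h}v_{h}|+|\Delta_{h}v_{h}|$. The change in the $u'_{0}$ argument (which was absent in Lemma~\ref{lem11.24}, where $\cH^{0}_{K}$ was used precisely to avoid it) produces the genuinely new term $c_{hk}w_{h}$ with $|c_{hk}|$ bounded via Assumption~\ref{assumption 9.24.1}-type Lipschitz control. Finally the $(t,x)$-dependence of $\cH_{K}$ contributes $f_{h}h$ via \eqref{9.8.4}. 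Crucially, \eqref{9.11.5}, \eqref{9.11.6}, \eqref{9.11.07} make $b_{hk}$, $c_{hk}$, $f_{h}$ \emph{globally} bounded on $(0,T)\times\bR^{d}$ by a constant independent of $h$, yielding equation \eqref{6.8.1}.

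Next, I would apply Lemma~\ref{lemma 7.21.1} to \eqref{6.8.1} on $Q^{o}=(0,T)\times\bR^{d}$ (with $Q=Q^{o}\cup(\{T\}\times\bR^{d})$). The smallness condition $hb_{hk}^{-}\leq a_{hk}$ holds for sufficiently small $h$ since $b_{hk}$ is bounded and $a_{hk}\geq\hat{\delta}/2$; the zero-order coefficient $c_{hk}$ is absorbed into the $\bar{c}T$ exponential. The terminal data satisfy $|w_{h}(T,x)|=|g(x+hl)-g(x)|\leq\|Dg\|_{\infty}h$, and the inhomogeneity is $f_{h}h$ with $f_{h}$ globally bounded. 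Lemma~\ref{lemma 7.21.1} then gives $|w_{h}(t,x)|\leq M_{0}h$ on $(0,T)\times\bR^{d}$ with $M_{0}$ independent of $h$ and of the choice of $l$ with $|l|\leq1$.

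It remains to pass from this one-step estimate to the statement of the corollary. If $|x-y|\leq h$, set $l=(y-x)/h$, so $|l|\leq1$ and the previous step gives $|v_{h}(t,x)-v_{h}(t,y)|\leq M_{0}h\leq M_{0}(|x-y|+h)$. If $|x-y|>h$, I choose an integer $n\leq |x-y|/h+1$ and intermediate points $x=x_{0},x_{1},\dots,x_{n}=y$ with $|x_{i+1}-x_{i}|\leq h$, apply the previous estimate on each segment, and sum to get $|v_{h}(t,x)-v_{h}(t,y)|\leq nM_{0}h\leq M(|x-y|+h)$, as required. The main obstacle I anticipate is verifying that the $c_{hk}w_{h}$ term (the one that was absent in Lemma~\ref{lem11.24}) really is dominated by a global constant; this hinges on having the global bound \eqref{9.11.07} on $\Delta_{h,l_{k}}v_{h}$ feed into Lemma~\ref{lemma 5.3.1}, and is precisely where the whole-space situation diverges from the bounded-domain one.
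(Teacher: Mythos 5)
Your proposal follows essentially the same route as the paper: repeat the proof of Lemma~\ref{lem11.24} keeping the $u'_0$-slot of $\cH_{K}$, use the global bounds \eqref{9.11.5}, \eqref{9.11.6}, \eqref{9.11.07} to make the resulting coefficients uniformly bounded, arrive at \eqref{6.8.1}, and then apply Lemma~\ref{lemma 7.21.1} plus a chaining argument. One small misattribution to flag: the Lipschitz control you need for the $u'_0$ term does \emph{not} come from Assumption~\ref{assumption 9.24.1} (which is not in force for Theorem~\ref{theorem 10.5.01}), but from \eqref{9.22.1} via Lemma~\ref{lemma 5.3.1} --- indeed the bound $|\cH_{u'}|\leq N(1+|u'|+|z''|)$ in Lemma~\ref{lemma 5.3.1} already covers the full vector $u'=(u'_0,\dots,u'_d)$, so the same lemma that produces $b_{hk}$ also produces $c_{hk}$; this is consistent with the corollary's phrase ``which may depend on $N'$.'' This does not affect the correctness of the argument.
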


After that one finishes the proof of Theorem \ref{theorem 9.23.01}
in the same way as Theorem \ref{theorem 10.5.1} is proved,
{\color{black} of course, dropping the part of the proof dealing
with the fixed point argument}.

\mysection{Proof of Theorem \protect\ref{theorem 9.15.1}}
                                       \label{section 9.21.3}

By the maximum principle $v_{K}$ decreases as $K$ increases.
Estimate \eqref{2.28.1} guarantees that $v_{K}$ converges uniformly
to a function $v\in C(\bar{\Omega}_{T})$. To prove that
$v$ is an $L_{d+1}$-viscosity solution we need the following,
in which 
$$ 
C_{r}=(0,r^{2})\times
B_{r},\quad C_{r}(t,x)=(t,x)+C_{r}. 
$$

\begin{lemma}
                                           \label{lemma 9.20.1}
There is a constant $N$ depending only on $d$,
$\delta$, and the Lipschitz constant of $H$ with respect to
$(u'_{1},...,u'_{d})$ such that
for any $r\in(0,1]$ and $C_{r}(t,x)$ satisfying $C_{r}(t,x)\subset\Omega_{T}$  and
$\phi\in W^{1,2}_{d+1}(C_{r}(t,x))$ we have on $C_{r}(t,x)$ that
\begin{equation}
                                             \label{9.20.1}
v\leq \phi+Nr^{d/(d+1)}\|(\partial_{t}\phi+
H[\phi])^{+}\|_{L_{d+1}(C_{r}(t,x))}
+\max_{\partial'C_{r}(t,x)}(v-\phi)^{+} .
\end{equation}
\begin{equation}
                                             \label{9.20.2}
v\geq \phi-Nr^{d/(d+1)}\|(\partial_{t}\phi+
H[\phi])^{-}\|_{L_{d+1}(C_{r}(t,x))}
-\max_{\partial'C_{r}(t,x)}(v-\phi)^{-} .
\end{equation}
\end{lemma}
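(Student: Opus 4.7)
The plan is to obtain both estimates by applying the parabolic Aleksandrov-Bakelman-Pucci estimate to the linearization of the fully nonlinear approximating equation satisfied by $v_K$, and then letting $K\to\infty$. Setting $F_K=\max(H,P-K)$, Theorem \ref{theorem 9.12.1} gives $\partial_tv_K+F_K[v_K]=0$ a.e.\ in $\Omega_T$; by Remark \ref{remark 6.5.1} $F_K$ is Lipschitz in $u''$ with $F_{Ku''}\in\bS_{\check\delta}$, and by Assumption \ref{assumption 9.24.1} it is Lipschitz in $(u'_1,\dots,u'_d)$ and nonincreasing in $u'_0$ (the latter because $P$ does not depend on $u'_0$).

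Setting $w_K=v_K-\phi$ and linearizing $F_K[v_K]-F_K[\phi]$ along the segment joining $(\phi,D\phi,D^2\phi)$ to $(v_K,Dv_K,D^2v_K)$ by the Hadamard procedure of Lemma \ref{lemma 9.29.2} yields the linear backward-parabolic equation
\begin{equation*}
\partial_tw_K+\tilde a_{ij}D_{ij}w_K+\tilde b_iD_iw_K+\tilde cw_K=-\bigl(\partial_t\phi+F_K[\phi]\bigr)\quad\text{a.e.\ in }C_r(t,x),
\end{equation*}
with $\tilde a\in\bS_{\check\delta}$, $|\tilde b|$ controlled by the Lipschitz constant in Assumption \ref{assumption 9.24.1}, and $\tilde c\le 0$. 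A time reversal $\tau=t+r^2-s$ converts this into a standard forward uniformly parabolic equation whose coefficients satisfy the hypotheses of the $W^{1,2}_{d+1}$ parabolic ABP estimate. Splitting the equality into its two one-sided inequalities, the ABP estimate yields both
\begin{equation*}
\sup_{C_r(t,x)}w_K\le\max_{\partial'C_r(t,x)}w_K^++Nr^{d/(d+1)}\bigl\|\bigl(\partial_t\phi+F_K[\phi]\bigr)^+\bigr\|_{L_{d+1}(C_r(t,x))}
\end{equation*}
and the analogous bound on $\inf w_K$ with $w_K^-$ and $(\partial_t\phi+F_K[\phi])^-$ on the right, where $N$ depends only on $d$, $\delta$, and the Lipschitz constant in Assumption \ref{assumption 9.24.1}.

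The final step is to pass $K\to\infty$. The uniform convergence $v_K\to v$ handles the left-hand sides and the boundary terms. The main obstacle is the $L_{d+1}$-convergence
\begin{equation*}
\bigl\|\bigl(\partial_t\phi+F_K[\phi]\bigr)^\pm\bigr\|_{L_{d+1}(C_r(t,x))}\longrightarrow\bigl\|\bigl(\partial_t\phi+H[\phi]\bigr)^\pm\bigr\|_{L_{d+1}(C_r(t,x))},
\end{equation*}
which follows by dominated convergence: since $\phi\in W^{1,2}_{d+1}(C_r(t,x))$ both $H[\phi]$ and $P[\phi]$ are $L_{d+1}$-integrable, the set $E_K=\{P[\phi]-K>H[\phi]\}$ on which $F_K[\phi]\ne H[\phi]$ has vanishing measure as $K\to\infty$, and on $E_K$ the integrand is dominated by the fixed $L_{d+1}$-integrable majorant $|\partial_t\phi|+|H[\phi]|+|P[\phi]|$, while on $E_K^c$ it already equals $(\partial_t\phi+H[\phi])^\pm$. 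Combining these ingredients produces \eqref{9.20.1} and \eqref{9.20.2}.
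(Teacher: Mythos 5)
Your proof is correct and follows essentially the same path as the paper's: both start from the a.e.\ equation $\partial_t v_K + F_K[v_K]=0$, linearize $F_K[v_K]-F_K[\phi]$ via Hadamard/Rademacher to obtain a uniformly parabolic linear equation for $v_K-\phi$ with bounded lower-order coefficients and nonpositive zeroth-order coefficient, apply the parabolic ABP estimate (the paper cites Lemma 2.1 and Remark 1.1 of \cite{Kr86} for this), and then pass $K\to\infty$. Your dominated-convergence argument for the passage to the limit — using $H[\phi]\le\max(H[\phi],P[\phi]-K)\le\max(H[\phi],P[\phi])$ with the $L_{d+1}$ majorant $|\partial_t\phi|+|H[\phi]|+|P[\phi]|$ — supplies a detail that the paper leaves implicit (``by letting $K\to\infty$'').
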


\begin{proof} Observe that
$$
-\partial_{t}\phi-\max(H[\phi],P[\phi]-K)=
-\partial_{t}\phi-\max(H[\phi],P[\phi]-K)
$$
$$
+\partial_{t}v_{K}+
\max(H[v_{K}],P[v_{K}]-K)
$$
$$
=
\partial_{t}(v_{K}-\phi)+a_{ij}D_{ij}(v_{K}-\phi)
+b_{i}D_{i}(v_{K}-\phi)-c(v_{K}-\phi),
$$
where $a=(a_{ij})$ is a $d\times d$ symmetric matrix-valued
function whose eigenvalues are in $[\check{\delta} ,\check{\delta}^{-1}]$,
$b_{i}$ are bounded functions, and $c\geq0$.
It follows by Lemma 2.1 and Remark 1.1 of \cite{Kr86} with
$$
u=v_{K}-\phi-\max_{\partial'C_{r}(t,x)}(v_{K}-\phi) ^{+}
$$
that for $r\in(0,1]$
$$
v_{K}\leq \phi+\max_{\partial'C_{r}(t,x)}(v_{K}-\phi)^{+}
$$
\begin{equation}
                                             \label{9.20.3}
+Nr^{d/(d+1)}\|(\partial_{t}\phi+
\max(H[\phi],P[\phi]-K))^{+}\|_{L_{d+1}(C_{r}(t,x))},
\end{equation}
where the constant $N$ is of the type described
in the statement of the present lemma.
 We obtain \eqref{9.20.1} from \eqref{9.20.3}
by letting $K\to\infty$.
In the same way \eqref{9.20.2} is established.
The lemma is proved.
\end{proof}

Now we can prove that $v$ is  an $L_{d+1}$-viscosity solution.
Let $(t_{0},x_{0})\in\Omega_{T}$ and $\phi\in W^{1,2}_{d+1,loc}
(\Omega_{T})$ be such that $v-\phi$ attains a local
maximum at $(t_{0},x_{0})$ and $v(t_{0},x_{0})=\phi(t_{0},x_{0})$.
 Then for   $\varepsilon
>0$ and all small $r>0$ for
$$
\phi_{\varepsilon,r}(t,x)=\phi (t,x)+\varepsilon(
|x-x_{0}|^{2}+t-t_{0}- r^{2})
$$
 we have that 
$$
\max_{\partial'C_{r}(t_{0},x_{0})}(v -\phi_{\varepsilon,r})^{+}
=0.
$$
Hence, by Lemma \ref{lemma 9.20.1}
$$
   \varepsilon r^{2}= 
(v -\phi_{\varepsilon,r})(t_{0},x_{0})
\leq Nr^{d/(d+1)}\|(\partial_{t}\phi_{\varepsilon,r}+
H[\phi_{\varepsilon,r}])^{+}\|_{L_{d+1}(C_{r}(t_{0},x_{0}))},
$$
$$
Nr^{-(d+2)}\|(\partial_{t}\phi_{\varepsilon,r}+
H[\phi_{\varepsilon,r}])^{+}\|^{d+1}_{L_{d+1}(C_{r}(t_{0},x_{0}))}
\geq \varepsilon^{d+1}.
$$
By letting $r\downarrow0$ and using the continuity
of $H(u,t,x)$ in $u'_{0}$, which is assumed to be uniform
with respect to other variables, we obtain
\begin{equation}
                                        \label{6.8.4}
N\lim _{ r\downarrow0}\esssup_{C_{r}(t_{0},x_{0}) }
(\partial_{t}\phi_{\varepsilon }+
H[\phi_{\varepsilon }])\geq \varepsilon.
\end{equation}
where $\phi_{\varepsilon }=\phi+\varepsilon
(|x-x_{0}|^{2}+t-t_{0})$. {\color{black}Finally, observe that
 $v$ is continuous by construction,
$\phi$ is locally continuous by embedding theorems,
and $H(u,t,x)$ is  continuous with respect to
$u$ uniformly with respect to $(t,x)$ by assumption.}
Then letting $\varepsilon\downarrow0$ in \eqref{6.8.4}
proves that
$v$ is an $L_{d+1}$-viscosity subsolution.
The fact that it is also an $L_{d+1}$-viscosity supersolution
is proved similarly on the basis of \eqref{9.20.2}.

Finally, we prove that $v$ is the maximal continuous
$L_{d+1}$-viscosity subsolution. Let $u$ be an 
$L_{d+1}$-viscosity subsolution of \eqref{7.29.1} of class 
$C(\bar{\Omega}_{T})$.
Then, as is easy to see, for any $K\geq0$,
$u-v_{K}$   is an $L_{d+1}$-viscosity subsolution of  
$$
\partial_{t}w+F[w]= -h_{K},
$$
where $F[w]:=H[w+v_{K}]-H[v_{K}]$, so that $F[0]=0$, and
$ 
h_{K}(t,x)= H[v_{K}]  
$.
Since $h_{K}\leq0$, we conclude by Proposition 2.6
of \cite{CKS00} that, if, additionally, $u=g$ on $\partial\Omega_{T}$,
then $u-v_{K}\leq0$ in $\Omega_{T}$. Now it only remains to
let $K\to\infty$. The theorem is proved.\qed 

\begin{remark}
As  follows from \cite{CKS00} continuous
$L_{d+1}$-viscosity subsolutions $u$ of \eqref{7.29.1} satisfy
\eqref{9.20.1} with $u$ in place of $v $ for any
$\phi\in W^{1,2}_{d+1}(C_{r}(t,x))$ whenever
 $r\in(0,1]$ and
$C_{r}(t,x)\subset\Omega_{T}$. Therefore, this relation
can be taken as an equivalent definition
of what $L_{d+1}$-viscosity subsolutions are.
A nice feature of \eqref{7.29.1} is that it is 
satisfied for any
$\phi\in W^{1,2}_{d+1}(C_{r}(t,x))$ iff it is satisfied
for any $\phi\in C^{1,2}(\bar C_{r}(t,x))$.
\end{remark}

\medskip

{\color{black}  
{\bf Acknowledgement} The author is sincerely grateful
to many comments of the three referees, which  certainly helped
improve the presentation of the paper.}

\end{document}